\documentclass[12pt,oneside,a4paper,reqno]{amsart}

\usepackage{fullpage}
\usepackage{hyperref}
\usepackage{amssymb,amsmath,mathrsfs,amsthm}
\usepackage[dvipsnames]{xcolor}
\usepackage{etoolbox}
\usepackage{fullpage}
\usepackage{calrsfs}
\usepackage[T1]{fontenc} 
\usepackage{amsfonts}
\usepackage{mathtools}
\usepackage{bbm}
\usepackage{bm}
\usepackage{tikz-cd}
\usepackage[shortlabels]{enumitem}
\setenumerate{label={\normalfont(\textit{\roman*})}, ref={\textrm{\roman*}}
}

\usepackage{mathtools}

\DeclarePairedDelimiter\abs{\lvert}{\rvert}%
\DeclarePairedDelimiter\norm{\lVert}{\rVert}%

\makeatletter
\let\oldabs\abs
\def\abs{\@ifstar{\oldabs}{\oldabs*}}
\let\oldnorm\norm
\def\norm{\@ifstar{\oldnorm}{\oldnorm*}}
\makeatother

\newtheorem{introtheorem}{Theorem}

\theoremstyle{definition}

\newtheorem*{introdefinition*}{Definition}
\theoremstyle{plain}

\newcommand\Z{{\mathbf Z}}

\newcommand\N{{\mathbf N}}

\newcommand\A{{\mathcal A}}
\renewcommand\L{{\mathcal L}}

\newcommand{\AP}{\mathrm{AP}}
\newcommand{\Ker}{\mathrm{Ker}}
\newcommand{\NK}{\mathrm{NKer}}

\newcommand{\prt}{\mathrm{A}}
\newcommand{\brt}{\mathrm{C}}
\newcommand{\mg}{\mathrm{S}}

\newcommand{\dsep}{\Delta}
\newcommand{\udens}{\overline{D}}
\newcommand{\Sep}{\ensuremath{\mathrm{Sep}}}
\newcommand{\oac}{\ensuremath{\overline{\mathrm{ac}}}}
\newcommand{\uac}{\ensuremath{\underline{\mathrm{ac}}}}
\newcommand{\ac}{\ensuremath{\mathrm{{ac}}}}
\newcommand{\Dim}{\ensuremath{\mathrm{dim}}}
\renewcommand{\d}{\ensuremath{\mathrm{d}}}

\newcommand{\fp}{\boldsymbol{1}}
\newcommand\C{{\mathrm C}} 

\newcommand{\bigO}{O}
\newcommand{\bigT}{\Theta}
\theoremstyle{plain}
\newtheorem{theorem}{Theorem}[section]
\newtheorem{proposition}[theorem]{Proposition}
\newtheorem{lemma}[theorem]{Lemma}

\theoremstyle{definition}
\newtheorem{definition}[theorem]{Definition}
\newtheorem{remark}[theorem]{Remark}

\newtheorem{example}[theorem]{Example}
\newtheorem*{excont}{Example \continuation}
\newcommand{\continuation}{??}
\newenvironment{continueexample}[1]
 {\renewcommand{\continuation}{\ref{#1}}\excont[continued]}
 {\endexcont}

\renewcommand{\geq}{\geqslant}
\renewcommand{\leq}{\leqslant}

\begin{document}


\title{Tameness, nullness, and amorphic complexity of automatic systems}

\author[Maik Gr\"{o}ger]{Maik Gröger}
\author[El\.{z}bieta Krawczyk]{Elżbieta Krawczyk}

\address[Maik Gr\"{o}ger]{Faculty of Mathematics and Computer Science, Jagiellonian University in Krak\'ow, Poland}
\email{maik.groeger@im.uj.edu.pl}

\address[El\.{z}bieta Krawczyk]{Faculty of Mathematics\\
University of Vienna, Austria \& 
Faculty of Mathematics and Computer Science\\Institute of Mathematics\\
Jagiellonian University in Krak\'ow, Poland}
\email{ela.krawczyk7@gmail.com}

\subjclass[2020]{Primary: 37B10, 11B85 Secondary: 37A45, 68R15}
\keywords{amorphic complexity, nullness, orbit separation dimension, substitutions, tameness}

\begin{abstract} 
In topological dynamics, tame and null systems arise naturally in the study of low-complexity aperiodic behaviour, yet providing concrete and easily testable conditions to establish their existence in a canonical class of systems is often nontrivial. 
We give a complete characterisation of tameness and nullness for minimal automatic systems generated by primitive constant length substitutions in terms of amorphic complexity---a numerical invariant recently introduced to study zero entropy systems. We derive an easily computable closed formula for this invariant in this setting and show that, for infinite automatic systems, tameness and nullness are equivalent to its value being one.
\end{abstract}

\maketitle


\section{Introduction}\label{sec:introduction}

Inspired by the seminal work \cite{BourgainFremlinTalagrand1978} of Bourgain, Fremlin, and Talagrand, K\"ohler introduced in \cite{Koehler1995} the notion of \emph{tame} dynamical systems (originally she used the term \emph{regular}).
Later, by adopting one of the main results of \cite{BourgainFremlinTalagrand1978} to the dynamical context, Glasner and Megrelishvili \cite{GlasnerMegerlishvili2006} provided the following dichotomy: the enveloping semigroup of \emph{any} system is either ``large'' (containing the Stone--\v{C}ech compactification of the natural numbers) or ``small'' (with its topology determined by sequential convergence), the latter case corresponding to the system being tame.

A particularly useful alternative characterization of tameness is provided by the theory of topological independence \cite{KerrLi2007}. Here, the existence or absence of large independence sets forces high or low dynamical complexity. For example, positive topological entropy of a system $T:X\to X$ is tied to the independence of iterates  $T^n$, $n\in\Z$ along subsets of positive density. At a weaker level, the absence of infinite independence sets is linked to tameness, while \emph{nullness} in the sense of Goodman \cite{Goodman} (i.e.\ zero topological sequence entropy) requires the lack of arbitrarily large finite independence sets, see Section \ref{sec:tamenull} or \cite{KerrLi2007} for precise definitions and further information.

Since their introduction—and in particular in recent years—tame and null systems have been widely studied. This research has focused on characterizing their structural properties in various ways and developing more directly applicable criteria for identifying tame or null systems.
For more information on tame systems, see e.g.~ \cite{Glasner2006,GlasnerMegerlishvili2006,Huang2006,Glasner2007,Glasner2017,GlasnerMegrelishvili2018,FuhrmannKwietniak2020,FuhrmannGlasnerJagerOertel2021,GlasnerMegrelishvili2022,FuhrmannKellendonkYassawi2024,FuhrmannLiu2025, Kellendonk2025}, and for null systems, see for instance \cite{HuangLiShaoYe2003,MaassShao2007,HuangYe2009,GarciaRamos2017,FuhrmannKwietniak2020,QiuZhao2020,Leonard2025,LePavlovSchlortt2025}.

As part of the recent developments, an important focus has been on establishing concrete and computationally tractable conditions to identify tame and null systems within canonical classes of dynamical systems. 
In this article, we provide a complete characterisation of tameness and nullness for the well-studied class of minimal automatic dynamical systems \cite{Fogg2002,bookQueffelec} in terms of a single numerical invariant. 
Such systems can be equivalently described as those that are conjugate to a subshift generated by a primitive, constant length substitution (see Sections \ref{sec:substitution} and \ref{sec:subsstsems}).

The numerical quantity we will utilize is the notion of amorphic complexity, a relatively new topological invariant introduced for the study of zero entropy systems \cite{FuhrmannGroegerJaeger2016}. It is tailor-made for analysing minimal systems with discrete spectrum and continuous eigenfunctions \cite{FuhrmannGroeger2020}. Beyond $\Z$-actions, this class encompasses dynamical systems canonically induced by mathematical quasicrystals, such as Penrose and chair tilings. In recent work \cite{FuhrmannGroegerJaegerKwietniak2023}, amorphic complexity was extended to these more general group actions; the authors of \cite{BaakeGaehlerGohlke2025} used this to calculate amorphic complexity for some prominent examples, including the newly discovered Hat tiling \cite{SmithMyersKaplanGoodman-Strauss2024}, see also Remarks \ref{rem:inf_tiling_and_beyond_mean_eq} and \ref{rem:Baakeresult}. Very recently, \cite{Gaehler2026} provided numerical evidence that amorphic complexity, together with another finitary invariant—asymptotic composants—is strong enough to completely distinguish minimal one-dimensional inflation tilings with pure-point spectrum for a class of small ternary Pisot unit inflation factors.
Furthermore, building on the ideas of \cite{FuhrmannGroegerJaeger2016}, the authors of \cite{KasjanKeler2025} were able to distinguish between several Toeplitz subshifts associated with $\mathcal{B}$-free systems—subshifts that were previously indistinguishable using classical topological invariants.

Before stating our main results, we need to introduce some notation: we first fix an \emph{alphabet}, i.e.\ a finite set $\mathcal{A}$. A \emph{substitution} $\varphi$ is a map that assigns to each letter $a \in \mathcal{A}$ a finite word in $\mathcal{A}^*$, the set of all finite words over $\mathcal{A}$. It is said to be of \emph{(constant) length} $k \geq 2$ if the word $\varphi(a)$ has length $k$ for all $a \in \mathcal{A}$. By concatenation, $\varphi$ naturally extends to a map on $\mathcal{A}^*$. We call $\varphi$ \emph{primitive} if there exists an $n \geq 1$ such that, for each $a \in \mathcal{A}$, the word $\varphi^n(a)$ contains every letter of $\mathcal{A}$. 

To each substitution $\varphi$, we can associate a canonical subset $X_{\varphi}\subset \A^{\Z}$ of the space $\A^{\Z}$  of all bi-infinite sequences with symbols in $\mathcal{A}$. Specifically, $X_{\varphi}$ is the collection of all $x \in \mathcal{A}^\Z$ such that every finite word appearing in $x$ also appears in $\varphi^n(a)$ for some $a \in \mathcal{A}$ and $n \geq 0$. Note that there is a natural action of the \emph{(left) shift} $T$ on $X_{\varphi}$, defined by
$T((x_n)_{n \in \Z}) = (x_{n+1})_{n \in \Z}$ for all $(x_n)_{n \in \Z} \in X_{\varphi}$.
In the following, when we say that $X_\varphi$ is tame or null, we mean that the \emph{dynamical system} $(X_\varphi, T)$ is tame or null, respectively.

For the sake of simplicity, we introduce amorphic complexity here only in the symbolic context, where it admits a direct fractal geometric interpretation. The general definition is provided in Section~\ref{sec:asymp sep numbers and ac}, along with a brief overview of some of its basic properties as a topological invariant. For a symbolic system, its \emph{upper} and \emph{lower amorphic complexity} can be defined simply as the lower and upper box dimension, respectively, of its maximal equicontinuous factor equipped with the Besicovitch (pseudo)metric (see Remark \ref{rem:different_version_Besicovitch}). While these two values need not coincide in general, we will see next that for a minimal automatic system $X_\varphi$ they do, and we denote their common value by $\ac(X_\varphi)$.

\begin{introtheorem}\label{introthm:tameness} 
    Suppose $\varphi\colon \A\to\A^*$ is a primitive substitution of length $k\geq 2$. Then, the following are equivalent:
    \begin{enumerate}
        \item $\ac(X_{\varphi})\in \{0,1\}$ (with $\ac(X_{\varphi})=0$ corresponding to $X_{\varphi}$ being  finite),
        \item $X_{\varphi}$ is null,
        \item $X_{\varphi}$ is tame.
    \end{enumerate}
\end{introtheorem}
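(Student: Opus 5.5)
The plan is to go around the cycle (i)$\Rightarrow$(ii)$\Rightarrow$(iii)$\Rightarrow$(i). The middle implication holds for every topological dynamical system: an infinite independence set for a pair of disjoint closed sets contains arbitrarily large finite ones, so nullness implies tameness \cite{KerrLi2007}. Finite $X_\varphi$ are trivially null and tame, and they are exactly the case $\ac(X_\varphi)=0$. So we may assume $X_\varphi$ is infinite, i.e.\ aperiodic, and work with its structure over the maximal equicontinuous factor. This factor is the odometer $\Z_k\times\Z/h\Z$, where $h$ is the height. The factor map $\pi$ is almost one-to-one, and its singular fibres are governed by the column number and by the ``discrepancy'' combinatorics of pairs of letters under $\varphi$.

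\textbf{Step 1: the closed formula.} I would use the closed formula for $\ac(X_\varphi)$ derived earlier in the paper. I expect it to have the shape $\ac(X_\varphi)=\log k/(\log k-\log \lambda)$. Here $\lambda\in[1,k)$ is the spectral radius of a nonnegative matrix that counts, for pairs of letters $(a,b)$ which are not yet coincident, the positions $j$ at which $\varphi(a)_j\neq\varphi(b)_j$. Restricting to the relevant strongly connected component of pairs should give $\ac(X_\varphi)=1$ if and only if $\lambda=1$. Combinatorially, $\lambda=1$ means that in $\varphi^n$ every non-coincident pair disagrees in only polynomially many of the $k^n$ positions, because each irreducible block then has at most one disagreeing position propagating. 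By contrast, $\lambda>1$ means the set of disagreeing positions grows like $\lambda^n$, exponentially.

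\textbf{Step 2: (i)$\Rightarrow$(ii).} Assume $\lambda=1$. I would show that the set of odometer points with non-singleton fibre is contained in finitely many orbits of the odometer (equivalently, is countable), and that fibres have uniformly bounded cardinality, namely at most the number of letters. Given disjoint closed sets $U_0,U_1$, which we may take to be cylinder sets determined by a window of length $L$, suppose $J=\{j_1,\dots,j_m\}$ is an independence set. Then for every $\sigma\in\{0,1\}^m$ there is a point $x_\sigma$ with $T^{j_i}x_\sigma\in U_{\sigma(i)}$. Passing to a $k^n$-block decomposition with $k^n\gg L$, any two points lying over the same odometer point can differ only near the boundaries of the level-$n$ blocks, in at most polynomially many windows, and there only in a tree-like (non-branching) pattern. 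From this one counts that the number of realisable patterns on $J$ is bounded by a polynomial in $m$, or in fact by a constant. Since independence needs $2^m$ patterns, $m$ is bounded, which gives nullness. This is essentially a Sauer--Shelah style count adapted to the substitutive hierarchy.

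\textbf{Step 3: (iii)$\Rightarrow$(i), the main obstacle.} Suppose $\lambda>1$. I would construct an infinite independence set for two cylinders $[a]$ and $[b]$, where $(a,b)$ is a pair in the expanding component. Pick $n_0$ and a pair such that $\varphi^{n_0}(a)$ and $\varphi^{n_0}(b)$ disagree at two positions $p,q$, and at each of these the resulting letter pairs lie back in the same component. This gives a self-similar binary branching: at each level one chooses independently which ``side'' to follow at $p$ and at $q$. Iterating yields positions $j_1<j_2<\cdots$, one per level, and for any $\sigma\in\{0,1\}^{\N}$ a nested sequence of words whose limit is a point of $X_\varphi$ with the prescribed letters at all $j_i$. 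The difficulty is making the choices at different levels truly independent, since one choice must not force the letters at other selected positions. I would handle this by choosing the positions in disjoint subtrees, and by using primitivity to realise all the intermediate letter pairs inside a common legal word. This is an infinite independence set, so $X_\varphi$ is not tame. An alternative route, if the direct construction fails, is to show that $\lambda>1$ forces an uncountable set of non-injectivity points with Cantor-like fibre structure, and then invoke the characterisation of tame almost automorphic systems in \cite{FuhrmannGlasnerJagerOertel2021,FuhrmannKellendonkYassawi2024}.
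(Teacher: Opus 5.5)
Your treatment of the finite case, of (ii)$\Rightarrow$(iii), and of $\ac(X_\varphi)=1\iff\lambda_s=1$ via the closed formula is fine. You should first pass to the pure base with Theorem~\ref{thm:dekking}, since $\lambda_s$ is defined there. Both substantive implications, however, have genuine gaps.

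\textbf{(i)$\Rightarrow$(ii).} The counting step is asserted rather than proved, and it is the whole difficulty. Knowing how two points over the \emph{same} odometer point differ says nothing directly about $\abs{\L(x,J)}$. The $2^m$ witnesses $x_\sigma$ of an independence set lie over $2^m$ different odometer points. Moreover, $J$ may have arbitrary gaps, so the shifts $i+J$ probe many levels of the $k$-adic hierarchy at once. A constant bound on $\abs{\L(x,J)}$ is also impossible: in any infinite subshift, $J=\{0,\dots,m-1\}$ already gives at least $m+1$ words.

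The paper supplies the missing mechanism through the $k$-kernel of a fixed point $x$. First, $\lambda_s=1$ is shown to be equivalent to every nonconstant $y\in\Ker_k(x)$ occurring at most once as a $k^m$-AP of itself (Proposition~\ref{prop:simplekernelstructure}). Next, for a large $G$, Lemma~\ref{lem:combinatorics} gives a residue class modulo $k^m$ containing $s$ points of $G$, with $s$ further points of $G$ outside it. After normalising so that this class is $0$ and $x$ is its own $0$th $k^m$-AP, shifts outside $k^m\Z$ are controlled on the first part and shifts in $k^m\Z$ on the second. In both cases the control comes from kernel elements other than $x$ (Lemma~\ref{prop:kernelandnullness}). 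Finally, a descent through the finite kernel, using (C1'), rules out a non-null $x$. Your Step~2 needs something of this kind, or an explicit bound on $\abs{\L(x,J)}$ that is polynomial in $\abs{J}$ and uniform in the gap structure of $J$.

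\textbf{(iii)$\Rightarrow$(i).} The branching construction does not produce independence as described. Once the letter $e$ of a supertile is fixed, the letters at positions $p$ and $q$ of $\varphi^{n_0}(e)$ are both determined by $e$. So you cannot ``choose independently which side to follow at $p$ and at $q$'' inside one word. The freedom has to come from the shift, i.e.\ from the odometer coordinate, for instance by choosing at each level whether the corresponding $k^{n_0}$-adic digit is $p$ or $q$. You then need fixed positions $j_l$ whose letter reads off the $l$th digit choice without interference from the other digits. That requires keeping the relevant letters inside a column set on which the maps at $p$ and $q$ act as the identity, and you do not show how to arrange this.

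The fallback is not an argument as it stands either. Getting uncountably many singular fibres from $\lambda_s>1$ is easy: a branching strongly connected component of the discrepancy graph gives a Cantor set of digit sequences. But the general constraint tameness imposes via \cite{FuhrmannGlasnerJagerOertel2021} is only that the singular set be Haar-null, and that holds automatically when $\lambda_s<k$. The criterion the paper uses, \cite[Thm.~2.2]{FuhrmannKellendonkYassawi2024}, is combinatorial: no two distinct cycles through a common vertex of $\mathcal G_\varphi$, i.e.\ condition~\eqref{prop:simplekernelstructure4} of Proposition~\ref{prop:simplekernelstructure}. So the missing step is to show that $\lambda_s>1$ violates it.

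The paper does this by computing $\d_m(k,x)$ in two ways. Via the discrepancy substitution it equals $\bigT(m^{d_s}\lambda_s^m)$. Via the matrix recording how nonconstant kernel elements occur as $k$-APs of one another, it grows exponentially if and only if some power of that matrix has a diagonal entry at least $2$. That happens exactly when some nonconstant kernel element (equivalently, some column set $\A'$ with $\abs{\A'}\geq 2$) is reproduced by two distinct column maps of some $\varphi^m$.
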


In particular, Theorem~\ref{introthm:tameness} yields that a system $X_{\varphi}$ is null if and only if it is tame (recall that nullness always implies tameness); to the best of our knowledge, this is a new result even in case of a binary alphabet. Furthermore, at the end of the introduction of \cite{FuhrmannKellendonkYassawi2024}, it is already pointed out that \cite[Thm.~1.2]{FuhrmannKellendonkYassawi2024} together with \cite[Thm.~1.1]{FuhrmannGroeger2020} yields the equivalence of (i) and (iii) in the binary case. This means that, concerning this equivalence, Theorem~\ref{introthm:tameness} extends this observation to the full general case in the minimal setting.

Theorem~\ref{introthm:tameness} offers a natural interpretation of tameness and nullness in the context of minimal automatic systems. As shown in \cite[Cor.~1.3]{FuhrmannGroeger2020}, amorphic complexity of such systems is either $0$ or lies in the interval $[1, \infty]$. Furthermore, it is straightforward to demonstrate that amorphic complexity can attain a dense set of values in $[1, \infty)$ in the class of minimal automatic systems over a fixed alphabet of size at least two (see Proposition~\ref{prop:denseness}). Beyond the trivial case where amorphic complexity is $0$ (corresponding to periodic behaviour), this reveals a complexity gap between $0$ and $1$, with tame and null systems occupying exactly the minimal nontrivial complexity level of $1$.

A promising direction for future research would be to extend the class of systems in which the equivalence of Theorem~\ref{introthm:tameness} still holds. Even within the class of substitution systems—but beyond constant length—such an extension would be quite valuable: As already discussed at the end of \cite[Sec.~6]{FuhrmannGroeger2020}, amorphic complexity of the substitution shift $X_\tau$ for the Tribonacci substitution $\tau$ must be at least $2$, since its maximal equicontinuous factor is the two-torus. If Theorem~\ref{introthm:tameness} were to hold in this more general setting, this would imply that $X_\tau$ is neither null nor tame. This aligns well with the result of \cite[Thm.~6]{KamaeZamboni2002}, which shows that $X_\tau$ is not null, and with recent work by J.~Sudbery and R.~Yassawi demonstrating the non-tameness of $X_\tau$\footnote{private communication; see also the final report of the  \href{https://www.birs.ca/events/2025/5-day-workshops/25w5437}{BIRS workshop Directions in Aperiodic Order}}. However, the latter seems to contradict the results announced in \cite[Rem.~6.19]{GlasnerMegrelishvili2018} and \cite[p.~5]{GlasnerMegrelishvili2022}.

To establish Theorem \ref{introthm:tameness} we need to determine amorphic complexity for arbitrary minimal automatic systems.
For this, to each primitive substitution $\varphi$ of constant length we associate its \emph{discrepancy substitution} $\varphi_s$ and define the \emph{discrepancy rate} $\lambda_s$ of $\varphi$ as the dominant eigenvalue of the incidence matrix of $\varphi_s$ (see Section \ref{sec:discrepancysubstitution}). 
The discrepancy substitution $\varphi_s$ is straightforward to compute; however, in general, it is not primitive, which makes its analysis considerably more delicate.
For this reason, we introduce suitable dynamically defined pseudometrics that allow for a refined analysis of the discrepancy structure.
With this we derive a closed formula for the amorphic complexity of any minimal automatic systems, showing, in particular, that it always exists.

\begin{introtheorem}\label{introthm:ac_of_automatic_systems} Let $\varphi$ be a primitive substitution of length $k\geq 2$.  The amorphic complexity of $X_{\varphi}$ is given by
\[\ac(X_{\varphi})=\frac{\log k}{\log k - \log \lambda_s},\]
where $\lambda_s$ is the discrepancy rate of $\varphi$.\footnote{Assuming the usual conventions when dividing by 0 or $\infty$ and $-\log 0 = \infty$.}
\end{introtheorem}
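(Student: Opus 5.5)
We will compute amorphic complexity through asymptotic separation numbers with respect to the Besicovitch pseudometric on the maximal equicontinuous factor. By Dekking's theorem, for a primitive length-$k$ substitution with height $h$ (after passing to the pure base if necessary, which does not change $\ac$ since the extra factor $\Z/h\Z$ is finite), the maximal equicontinuous factor is the odometer $\Z_k\times\Z/h\Z$. Points of $X_\varphi$ are coded by their $k$-adic skeleton. Given two odometer points $z,z'$, we choose fibre representatives $x,x'$ and use the density $\udens(\{n : x_n\neq x'_n\})$ as the distance; by the results of \cite{FuhrmannGroeger2020} this is well defined up to null sets and gives the Besicovitch pseudometric $d_B$. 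Then $\ac(X_\varphi)$ equals the box dimension of $(\Z_k,d_B)$. Suppose that for $z,z'$ agreeing in exactly the first $n$ $k$-adic digits we had $d_B(z,z')\asymp (\lambda_s/k)^n$, up to constants and subexponential factors uniformly in the remaining digits. A ball of radius $\delta$ would then essentially be a cylinder of depth $n$ with $(\lambda_s/k)^n\approx\delta$. Hence $N(\delta)\approx k^n$ and
\[\frac{\log N(\delta)}{-\log\delta}\to \frac{\log k}{\log k-\log\lambda_s}.\]
The degenerate conventions correspond to $\lambda_s=0$ (finite system, $\ac=0$) and $\lambda_s=k$ (infinite $\ac$).

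The central step is the estimate $d_B\asymp(\lambda_s/k)^n$, via the discrepancy substitution. Two points whose coordinates differ at level $n$ decompose into level-$n$ supertiles $\varphi^n(a)$ and $\varphi^n(b)$ placed side by side. The pairs $(a,b)$ with $a\neq b$ are exactly the letters of $\varphi_s$ on pairs, and $\varphi_s$ records which letter pairs appear at aligned positions in $\varphi(a),\varphi(b)$. Consequently, the number of mismatches inside $\varphi^n(a)$ versus $\varphi^n(b)$ is the number of off-diagonal letters in $\varphi_s^n((a,b))$, which grows like $\lambda_s^n$ by Perron--Frobenius-type growth of $\|M_s^n\|$. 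Dividing by the length $k^n$ gives the density. For two generic odometer points differing first at digit $n$, the aligned supertile pairs at every higher level are governed by frequencies in the primitive "pair" substitution on $\A\times\A$ restricted to its coincidence structure. The density thus becomes an average of $\lambda_s^n k^{-n}$-weighted counts.

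The main obstacle is that $\varphi_s$ is not primitive. Its incidence matrix may have several irreducible components, with the dominant eigenvalue $\lambda_s$ attained only on some of them, and with possible Jordan blocks producing polynomial factors $n^j$. These do not affect the dimension but must be controlled. The upper bound on $d_B$ is easy: $\|M_s^n\|\le Cn^{d}\lambda_s^n$. The lower bound is harder, because we must show that for typical pairs of points (in a set of full Haar measure, or at least along enough cylinders to give the covering lower bound) the aligned letter pairs actually reach the dominant component. Here we would introduce the dynamically defined pseudometrics $d_j$, one for each class of components of $\varphi_s$, that measure discrepancy coming from descendants of component $j$. We would show each $d_j$ is comparable to $(\lambda_j/k)^n$ on cylinders, using primitivity of $\varphi$ (every letter pair of the relevant component occurs with positive frequency within a bounded number of levels). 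Since $d_B$ is comparable to $\max_j d_j$, the dominant component yields the lower bound on separation numbers. Upper and lower box dimensions then coincide, which also shows that $\ac$ exists.
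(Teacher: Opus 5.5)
Your overall framework matches the paper's. You reduce to the pure base, identify $\ac(X_\varphi)$ with the box dimension of $\Z_k$ under the Besicovitch pseudometric, and derive both bounds from a scaling law $D_{\fp}(\varphi^n(x),\varphi^n(y))\approx(\lambda_s/k)^nD_{\fp}(x,y)$. Your upper bound, via $\|M_s^n\|_1\le Cn^d\lambda_s^n$ and a covering by depth-$n$ cylinders, is fine.

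The gap is the lower bound. You flag it as the main obstacle but do not prove it. What is needed is a \emph{uniform} estimate $D_{\fp}(\varphi^n(x),\varphi^n(y))\geq c\,(\lambda_s/k)^nD_{\fp}(x,y)$ for all $x,y$. Equivalently, any two points in different pieces $[T^i\varphi(X_\varphi)]$ must disagree, with density bounded below by a fixed constant, at positions where the aligned pair $\{x_j,y_j\}$ has maximal growth for $\varphi_s$.

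Primitivity of $\varphi$ says nothing about which pairs occur aligned in a given pair of points. A priori, all discrepancies between $x$ and $y$ could lie in a $\varphi_s$-invariant class of slower growth, such as ${b\choose c}\mapsto{b\choose c}{b\choose c}$ (rate $2<3=\lambda_s$) in Example~\ref{ex:discrepancysubstitution}. Excluding this is the heart of the proof. The same issue undermines the rest of your sketch:
\begin{itemize}
\item Your claim that each component pseudometric $d_j$ is comparable to $(\lambda_j/k)^n$ on cylinders is false as a two-sided statement, since $d_j$ can vanish on pairs with $D_{\fp}>0$.
\item Such a $d_j$ is not automatically a pseudometric; the paper needs a transitivity lemma even for the full set $\mg$.
\item $d_B\asymp\max_j d_j$ gives nothing at rate $\lambda_s$ unless the dominant $d_j$ is bounded below on all relevant pairs.
\item Restricting to Haar-typical pairs does not help. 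A separated set needs all its pairs separated, and you would still have to show the maximal-growth discrepancy density is positive.
\end{itemize}

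The paper fills this gap with Proposition~\ref{prop:metricsequivalnet}, using the single pseudometric $D_{\mg}$ built from all pairs of maximal growth.
\begin{enumerate}
\item Lemma~\ref{lem:when_d_D_is_pseudometric} shows that $\mg$ is transitive. Hence $D_{\mg}$ is a pseudometric, and $[X_\varphi]_{\mg}$ is the MEF of the coding factor $\pi_{\mg'}(X_\varphi)$ (Lemma~\ref{lem:pseudometricversusfactormap}).
\item This factor is infinite, so by the M\"ullner--Yassawi theorem (Theorem~\ref{thm:reemclemens}) its MEF is again $\Z_k$. Thus $[X_\varphi]\to[X_\varphi]_{\mg}$ is a factor map of the odometer onto itself, hence a conjugacy, and $D_{\mg}(x,y)=0$ forces $D_{\fp}(x,y)=0$. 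This structural input is exactly what your primitivity argument lacks.
\item Compactness of the disjoint pieces $[T^i\varphi(X_\varphi)]$ gives $D_{\mg}\geq M\geq MD_{\fp}$ across pieces.
\item Monotonicity of $D_{\mg}/D_{\fp}$ under $\varphi^n$ (Lemma~\ref{lem:proportion_of_ourmetric_to_Besicovitch}) extends this to all pairs.
\end{enumerate}
Only then does $D_{\fp}(\varphi^n(x),\varphi^n(y))\geq N'(\lambda_s/k)^nD_{\mg}(x,y)\geq N'M(\lambda_s/k)^nD_{\fp}(x,y)$ follow.

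Two smaller points. First, the degenerate cases ($\lambda_s=0$ iff $X_\varphi$ is finite, and $\lambda_s=k$ iff $X_\varphi$ is not mean equicontinuous) need an argument; the paper gives one via the coincidence criterion in Proposition~\ref{prop:sepnumberproperties}. Second, the pair substitution on $\A\times\A$ you invoke is not primitive, since the diagonal is invariant.
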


We note that in case of a binary alphabet, Theorem~\ref{introthm:ac_of_automatic_systems} coincides with \cite[Thm.~1.1]{FuhrmannGroeger2020}. Moreover, for general alphabets, \cite{FuhrmannGroeger2020} also provides a potential algorithm to compute concrete lower and upper bounds for the lower and upper amorphic complexity of $X_\varphi$, whereas \cite{BaakeGaehlerGohlke2025} gives improved lower and upper bounds; see also Remark~\ref{rem:Baakeresult}.

The paper is organized as follows. In Section~\ref{sec:setup} we collect the necessary preliminaries. In Section~\ref{sec:discrepancytools} we introduce the discrepancy substitution and new dynamically generated pseudometrics, which serve as the main technical tools throughout the paper. In Section~\ref{sec:complexityofsubstitution} we prove Theorem~\ref{introthm:ac_of_automatic_systems} concerning the formula for the amorphic complexity. Finally, in Section~\ref{sec:tamenessandnullness} we establish Theorem~\ref{introthm:tameness}.

\section*{Acknowledgements}  We thank Michael Baake, Gabriel Fuhrmann, Franz G\"ahler, and Reem Yassawi for illuminating and helpful discussions. The second author was supported by National Science Center, Poland under grant no.~UMO2021/41/N/ST1/04295. 
For the purpose of Open Access, the authors have applied a CC-BY public copyright license to any Author Accepted Manuscript (AAM) version arising from this submission.

\section{The setup}\label{sec:setup}

\subsection{Asymptotic notations} 

We denote by $\Z$ the set of integers and by $\N=\{0,1,2,\ldots\}$ the set of nonnegative integers.
We use standard asymptotic notations: Let $(a_n)_{n\in\N}$ and $(b_n)_{n\in\N}$ be two sequences of nonnegative real numbers. We write
\begin{itemize}
	\item $a_n = \bigO(b_n)$ if and only if $a_n\leq Cb_n$ for some $C>0$ and all sufficiently large $n$,
	\item $a_n = \bigT(b_n)$ if and only if $a_n = \bigO(b_n)$ and $b_n = \bigO(a_n)$,
	\item for positive $(b_n)_n$, $a_n \sim b_n$ if and only if $a_n / b_n\to 1$ as $n\to\infty$.
\end{itemize}

\subsection{Topological and symbolic dynamics} Let $f:X\to X$ be a homeomorphism of a compact metric space $(X,d)$. 
We call the pair $(X,f)$ a \emph{(topological) dynamical system}.
Given two systems $(X,f)$ and $(Y,g)$, we say  $(Y,g)$ is a \emph{factor}
of $(X,f)$ if there is a continuous onto map (a \emph{factor map}) $h:X\to Y$ such that $h\circ f=g\circ h$.
If additionally, $h$ is invertible, then $h$ is a \emph{conjugacy} and we say
$(X,f)$ and $(Y,g)$ are \emph{conjugate}.

A subset $E\subseteq X$ is \emph{$f$-invariant} if it is closed and $f(E)= E$.
We say $E$ is \emph{$f$-minimal} if it is $f$-invariant and does not contain any
nonempty proper subset which is $f$-invariant.
In case that $X$ is $f$-minimal itself, we also say that $(X,f)$ is \emph{minimal}.
Moreover, if there exists $x\in X$ such that its \emph{orbit} $\{f^n(x):n\in\Z\}$
is dense in $X$, we call $(X,f)$ \emph{transitive}.

An \emph{$f$-invariant} measure for $(X,f)$ is a Borel
probability measure $\mu$ on $X$ such that $\mu(f^{-1}A)=\mu(A)$ for every Borel
set $A\subseteq X$.
The system $(X,f)$ is said to be \emph{uniquely ergodic} if it admits a unique
$f$-invariant probability measure.
Given an $f$-invariant measure $\mu$, we say that $(X,f)$ has
\emph{discrete} (or \emph{pure-point}) \emph{spectrum with respect to $\mu$} if the associated
Koopman operator $U_f$ on $L^2(X,\mu)$, defined by $U_f\varphi=\varphi\circ f$ for $\varphi\in L^2(X,\mu)$, has discrete spectrum, i.e., if $L^2(X,\mu)$ is the closed linear span of eigenfunctions of $U_f$.

Furthermore, a system $(X,f)$ is \emph{equicontinuous} if the family $(f^n)_{n\in\Z}$ is uniformly equicontinuous, that is, if for all $\nu>0$ there is $\delta>0$ such that for all $x,y\in X$ with $d(x,y)<\delta$ we have $d(f^n(x),f^n(y))<\nu$ ($n\in\Z$).
It is well known, see for instance \cite[Thm.\ 2.1]{Downarowicz2005}, that every
dynamical system $(X,f)$ has a unique (up to conjugacy) 
\emph{maximal equicontinuous factor (MEF)}: an equicontinuous factor $(Y,g)$ so
that every other equicontinuous factor of $(X,f)$ is also a factor of $(Y,g)$.

In what follows we will consider bi-infinite shift spaces $\Sigma = \mathcal{A}^\Z$, 
where $\mathcal{A}$ is a finite set referred to as the \emph{alphabet}, and 
$\Sigma$ is endowed with the product topology. This topology can be induced by 
equipping $\Sigma$ with the standard \emph{Cantor metric} defined for $x = (x_k)_{k\in\Z}$ and 
$y = (y_k)_{k\in\Z}$ in $\mathcal{A}^{\Z}$ as 
\[d_\beta(x,y) = \beta^{-j}\quad \text{where}\quad \beta > 1\quad \text{and}\quad
j = \min\{\abs{\ell} \mid x_{\ell} \neq y_{\ell}\}.\]
	The pair 
$(\Sigma, T)$ is called the \emph{full shift} over the alphabet $\mathcal{A}$ 
where $T: \Sigma \to \Sigma$ is the \emph{(left) shift} given by 
$T((x_n)_{n\in\Z}) = (x_{n+1})_{n\in\Z}$ for all $(x_n)_{n\in\Z} \in \Sigma$.
	Restricting the shift to a $T$-invariant subset $X\subseteq\Sigma$,
we will use  the same symbol $T$ for this restricted map and we will
refer to $(X,T)$ as a \emph{subshift}.
	One way to obtain subshifts is to consider \emph{orbit closures}: given $x\in\Sigma$,
we define its orbit closure to be
$\overline{\{T^k(x) : k\in\Z\}}\subseteq\Sigma$. For a subshift $(X,T)$ and a  letter $a\in \mathcal{A}$, the set $[a] = \{x\in X\mid x_0= a\}$ is referred to as the \emph{cylinder} of $a$.
In the following, if it is clear from the context, we will often omit the shift map $T$ when referring to a subshift (respectively, symbolic system) $(X,T)$.

Finally, for an alphabet $\A$, we let $\A^*$ denote the set of all finite words over $\A$.
This is a monoid under word concatenation with the neutral element given by the empty word $\epsilon$. For a word $w\in\A^{*}$, we denote by $\abs{w}$ its \emph{length}, i.e.\ the number of symbols in $w$.
For $n \in \N$, we also let $\mathcal{A}^n$ denote the set words of length $n$ over $\A$. For a nonempty word $u\in\A$, we denote by $u^{\omega}$ the biinfinite periodic sequence $u^{\omega} = \dots uuu.uuu\dots$, where the dot indicates the 0th position.

\subsection{Tameness and nullness}\label{sec:tamenull}

	Let $(X,f)$ be a dynamical system. For the definitions of tameness and nullness we follow  \cite{KerrLi2007}, referring there for a more comprehensive treatment.  A set $S\subset \Z$ is called an \emph{independence set} for the pair $(U_0,U_1)$ with $U_0,\, U_1\subset X$ if for any finite subset $G\subset S$ and any choice function $\sigma\colon G\to\{0,1\}$ one has
	\[\bigcap_{i\in G} f^{-i}U_{\sigma(i)}\neq \emptyset.\]
	A pair $(x_0,x_1)$ of points of $X$ is called an \emph{IT pair} (resp.\ \emph{IN pair})  if for any neighbourhood $U_0\times U_1$ of $(x_0,x_1)$, the pair $(U_0,U_1)$ has an infinite (resp.\ arbitrary large finite) independence set. 
A pair $(x_0,x_1)$ is said to be \emph{trivial} if $x_0=x_1$.
	
\begin{definition}\label{def:general tame null}
	A dynamical system $(X,f)$ is  \emph{tame} (resp.\ \emph{null}) if it has no nontrivial IT (resp.\ IN) pairs.\footnote{Definition \ref{def:general tame null} should be compared with the following result: a system has zero topological entropy if and only if it has no nontrivial IE pairs, where the notion of an IE pair requires the existence of an independence set of positive density, see \cite{KerrLi2007}.}
\end{definition}

	With Definition \ref{def:general tame null} it is clear that every null system is tame. For ease of exposition we make the following straightforward observation about nullness of minimal subshifts, see also \cite[Prop.\ 5.4]{KerrLi2007}.
	For a sequence $x\in \A^{\Z}$ and a finite ordered set $G=\{n_1<\dots<n_t\}\subset \Z$ we let 
\[\L(x,G)= \{T^i(x)|_{G}\mid i\in\Z\} =\{w\in\A^t\mid x_{i+n_1}\dots x_{i+n_t} = w \text{ for some } i\in\Z\} 
\] denote the set of all words that appear ``along'' $G$ in some shift of $x$.

\begin{lemma}\label{lem:nullness}
	A minimal subshift $X\subset\A^{\Z}$ is null if and only if there exists $t\in \N$ such that for some $x\in X$ (equivalently for all $x\in X)$ we have 
\begin{equation}\label{eq:nullforsubshifts}
\{a,b\}^t \nsubseteq \L(x,G) \quad \text{for all distinct } a,b\in\A \text{ and all }G\subset \Z \text{ of size } t. 
\end{equation}
\end{lemma}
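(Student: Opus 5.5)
Since every nonempty subset of a null system inherits nullness only for subsystems, we instead work directly with IN pairs via cylinder neighbourhoods. We will show: $X$ is not null if and only if for every $t$ there are distinct $a,b\in\A$ and $G$ of size $t$ with $\{a,b\}^t\subseteq\L(x,G)$. First we note that $\L(x,G)$ does not depend on the choice of $x\in X$ when $X$ is minimal: every finite pattern $x_{i+n_1}\dots x_{i+n_t}$ is read off the finite word $x_{[i+n_1,i+n_t]}$, which by minimality occurs in every $y\in X$, so $\L(x,G)=\L(y,G)$. This gives the parenthetical equivalence ``for some / for all $x$''.

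For the direction ``not null $\Rightarrow$ the condition fails for every $t$'': let $(x_0,x_1)$ be a nontrivial IN pair. After applying a suitable power of $T$ (shifting both points, which preserves the IN property up to translating independence sets), we may assume $(x_0)_0=a\neq b=(x_1)_0$. Take $U_0=[a]$, $U_1=[b]$, which are open neighbourhoods of $x_0$ and $x_1$. For each $t$ there is an independence set $G$ of size $t$, so for every $\sigma\colon G\to\{0,1\}$ some $y\in X$ satisfies $y_{i}=a$ for $\sigma(i)=0$ and $y_i=b$ for $\sigma(i)=1$, $i\in G$. Writing $y=T^m x$ up to a limit (or directly using the independence of $\L$ from the point, since $y\in X$ and $\L(y,G)\subseteq\L(x,G)$ by minimality), we obtain $\{a,b\}^t\subseteq\L(x,G)$. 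Since $\A$ is finite, this holds for every $t$, and thus no $t$ as in \eqref{eq:nullforsubshifts} exists.

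For the converse, suppose that for every $t$ there are $a_t\neq b_t$ and $G_t$ of size $t$ with $\{a_t,b_t\}^t\subseteq\L(x,G_t)$. By finiteness of $\A$, some pair $(a,b)$ occurs for infinitely many $t$; and if $\{a,b\}^t\subseteq\L(x,G)$, then restricting to any subset of $G$ gives the same property for smaller sizes, so it holds for all $t$. Thus $[a],[b]$ have arbitrarily large finite independence sets. The remaining step, which I expect to be the main obstacle, is to pass from cylinders to an actual IN pair. Here I would invoke the standard Kerr--Li result \cite{KerrLi2007}: if $(A_0,A_1)$ are closed sets with arbitrarily large finite independence sets, then there exist $x_0\in A_0$, $x_1\in A_1$ forming an IN pair. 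This follows by repeatedly splitting $A_j$ into finitely many pieces of small diameter and using the pigeonhole/Ramsey-type lemma that independence density is retained by one of the pieces. Since $[a]\cap[b]=\emptyset$, the resulting IN pair is nontrivial, so $X$ is not null. This is essentially \cite[Prop.\ 5.4]{KerrLi2007}, which I would cite for that step rather than reprove.
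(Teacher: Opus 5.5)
Your argument is correct and is essentially the one the paper intends: the paper records this lemma as a straightforward observation with a pointer to \cite[Prop.~5.4]{KerrLi2007}, which is exactly the step you cite for passing from the cylinder pair $([a],[b])$ with arbitrarily large finite independence sets to a nontrivial IN pair. The other steps you supply are the routine parts left implicit in the paper: minimality makes $\L(x,G)$ independent of $x\in X$, and a nontrivial IN pair can be shifted so that it differs at coordinate $0$, making $([a],[b])$ an admissible neighbourhood. One small correction to your sketch of the Kerr--Li step: the property retained by one of the pieces is the existence of arbitrarily large finite independence sets, not independence density.
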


\begin{definition}\label{def:t-null}	We  say that a sequence $x$ in a subshift $X\subset \A^{\Z}$ is $t$-\emph{null} if it satisfies \eqref{eq:nullforsubshifts}.
It is \emph{null} if it is $t$-null for some $t\in\N$.
\end{definition}
	Clearly, if a sequence is $t$-null, then it is $T$-null for any $T\geq t$ and a minimal subshift is null if and only if it contains a null sequence.

\subsection{Pseudometrics}
Let \(X\) be a set and let \(d\) be a pseudometric on \(X\). Then \(d\) induces an
equivalence relation on $X$ given by \(x\sim_d y\) if \(d(x,y)=0\). We denote by \([X]_d\)
the corresponding quotient space, endowed with the metric induced by \(d\);
the canonical projection \([\cdot]_d: X\to[X]_d\) is continuous and open.

Given two (pseudo)metrics \(d\) and \(d'\) on \(X\), we say that they are
(\emph{Lipschitz}) \emph{equivalent} if the identity map 
$\mathrm{id}\colon (X,d)\to (X,d')$ is a (bi-Lipschitz) homeomorphism between the topological spaces induced by \(d\) and \(d'\). 
For Lipschitz equivalence, this means that there exist constants $c, C > 0$ such that for all $x,y \in X$, we have
\[
    c \cdot d(x,y) \leq d'(x,y) \leq C \cdot d(x,y).
\]
We stress that Lipschitz equivalence is strictly stronger
than \emph{uniform equivalence} even when the underlying space \(X\) is compact;
here, \emph{uniform equivalence} means that the identity map and its inverse are
uniformly continuous.

\subsection{Box dimension}

Later, we will make use of fractal geometric ideas and arguments following \cite{FuhrmannGroeger2020}. 
To this end, we briefly introduce the concept of box dimension for a totally bounded subset $E$ of a general metric space $(M,\rho)$. 
A subset $S$ of $M$ is called \emph{$\varepsilon$-separated} if for all distinct $s,s' \in S$, we have $\rho(s,s') \geq \varepsilon$. 
We denote by $M_\varepsilon(E)$ the maximal cardinality of an $\varepsilon$-separated subset of $E$. 
The \emph{lower} and \emph{upper box dimension} of $E$ are then defined as
\begin{align}\label{eq: definition box dimension}
    \underline\Dim_B(E)=\varliminf\limits_{\varepsilon\to 0}
        \frac{\log M_\varepsilon(E)}{-\log\varepsilon}
    \qquad\textnormal{ and }\qquad
    \overline\Dim_B(E)=\varlimsup\limits_{\varepsilon\to 0}
        \frac{\log M_\varepsilon(E)}{-\log\varepsilon}.
\end{align}
If $\underline\Dim_B(E)$ and $\overline\Dim_B(E)$ coincide, their common value, 
denoted by $\Dim_B(E)$, is referred to as the \emph{box dimension}\footnote{
The standard definition of box dimension considers the smallest number of sets with 
diameter strictly smaller than $ \varepsilon $ required to cover $ E $. The above definitions 
remain unchanged if $M_\varepsilon(E)$ in \eqref{eq: definition box dimension}
is replaced by this number (see, for example, Proposition~1.4.6 in \cite{Edgar1998}).} of $E$.

It is well known that both the lower and upper box dimension are invariant under
bi-Lipschitz homeomorphisms. 
In particular, this applies to the case of two Lipschitz equivalent metrics on a compact metric space.

\subsection{Amorphic complexity}\label{sec:asymp sep numbers and ac}

We briefly introduce amorphic complexity for $\Z$-actions and refer the reader to \cite{FuhrmannGroegerJaeger2016} 
and \cite{FuhrmannGroegerJaegerKwietniak2023} for a detailed discussion and various examples.
Note that \cite{FuhrmannGroegerJaeger2016} deals with $\N$-actions, while \cite{FuhrmannGroegerJaegerKwietniak2023}
covers actions by general locally compact $\sigma$-compact amenable groups.

Given a dynamical system $(X,f)$, $x,y\in X$ and $\delta>0$, we set
\begin{equation*}
	\dsep(f,\delta,x,y)=\left\{\ell\in\Z \mid d(f^{\ell}(x),f^{\ell}(y))\geq\delta\right\}.
\end{equation*}
For $\nu\in(0,1]$ we say $x$ and $y$ are \emph{$(f,\delta,\nu)$-separated}
if $\udens(\dsep(f,\delta,x,y))\geq\nu$, where $\udens(E)$ denotes the
\emph{upper density} of a subset $E\subseteq\Z$ defined as
\begin{equation*}
	\udens(E)=\varlimsup\limits_{n\to\infty}\frac{\abs{E\cap[-n,n]}}{2n+1}.
\end{equation*} 
A subset $S\subseteq X$ is said to be \emph{$(f,\delta,\nu)$-separated} if all pairs of
distinct $x,y\in S$ are $(f,\delta,\nu)$-separated.  
The \emph{(asymptotic) separation numbers} of $(X,f)$, denoted by $\Sep(f,\delta,\nu)$
for $\delta>0$ and $\nu\in(0,1]$, are defined as the largest cardinality of
an $(f,\delta,\nu)$-separated set contained in $X$.
If $\Sep(f,\delta,\nu)$ is finite for all $\delta>0,\nu\in(0,1]$, we say $(X,f)$ has
{\em finite separation numbers}, otherwise we say it has {\em infinite separation numbers}.
Systems with positive entropy or with a nontrivial weakly mixing measure
have infinite separation numbers, see \cite{FuhrmannGroegerJaeger2016,
FuhrmannGroegerJaegerKwietniak2023} for further information.
 
For systems with finite separation numbers, we 
define the \emph{lower} and \emph{upper amorphic complexity} of $(X,f)$ as
\begin{equation*}
	\uac(f)=\adjustlimits\sup_{\delta>0}\varliminf_{\nu\to 0}
		\frac{\log \Sep(f,\delta,\nu)}{-\log \nu}
	\quad\textnormal{and}\quad
	\oac(f)=\adjustlimits\sup_{\delta>0}\varlimsup_{\nu\to 0}
		\frac{\log\Sep(f,\delta,\nu)}{-\log \nu}. 
\end{equation*}
If both values coincide, we call $\ac(f)=\uac(f)=\oac(f)$ the {\em amorphic
complexity} of $(X,f)$. 
By allowing the above quantities to assume values
in $[0,\infty]$, they are  well defined for any map $f:X\to X$.
In particular, systems with infinite separation numbers have infinite amorphic
complexity.

The proof of the following statements is verbatim as the proofs for the
corresponding statements for $\N$-actions, see 
\cite[Prop.~3.4 \& Prop.~3.7]{FuhrmannGroegerJaeger2016}.

\begin{proposition}\label{prop: properties ac}
	Let $(X,f)$ and $(Y,g)$ be dynamical systems.
	\begin{enumerate}
			\item\label{prop: properties ac1} Suppose $(Y,g)$ is a factor of $(X,f)$.
					Then $\oac(f)\geq\oac(g)$ and $\uac(f)\geq \uac(g)$.
					In particular, amorphic complexity is an invariant of topological
					conjugacy.
			\item\label{prop: properties ac2} Let $m\in\Z$. 
		Then $\oac(f^m)=\oac(f)$ and
					$\uac(f^m)=\uac(f)$. 
	\end{enumerate}
\end{proposition}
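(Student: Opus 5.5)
We follow the strategy of \cite[Prop.~3.4 \& Prop.~3.7]{FuhrmannGroegerJaeger2016}, with upper densities now taken over symmetric windows $[-n,n]$ and the orbit indexed by $\Z$.

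For (i), let $h\colon X\to Y$ be a factor map and fix $\delta>0$. Since $X$ is compact, $h$ is uniformly continuous, so there is $\eta>0$ with
\[
d_X(x,x')<\eta \;\Longrightarrow\; d_Y(h(x),h(x'))<\delta .
\]
By the intertwining relation $h\circ f^\ell=g^\ell\circ h$, this gives the inclusion
\[
\dsep(g,\delta,h(x),h(x'))\subseteq \dsep(f,\eta,x,x').
\]
Now take a $(g,\delta,\nu)$-separated set $S\subseteq Y$. Choose one preimage under $h$ for each of its points. The resulting set is $(f,\eta,\nu)$-separated, because upper density is monotone under inclusion. Hence
\[
\Sep(g,\delta,\nu)\leq \Sep(f,\eta,\nu).
\]
Dividing by $-\log\nu$ and taking $\varliminf$ or $\varlimsup$ as $\nu\to0$, the $f$-side is bounded by the supremum over all $\eta$. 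Then take the supremum over $\delta$. Conjugacy invariance follows by applying the inequality in both directions. If the separation numbers of $f$ are infinite, the same inequality shows the conclusion holds trivially in $[0,\infty]$.

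For (ii), the case $m=0$ is degenerate, and we read the statement for $m\neq0$. We first treat $m=-1$. The set $\dsep(f^{-1},\delta,x,y)$ is the reflection $-\dsep(f,\delta,x,y)$, and upper density over the symmetric windows $[-n,n]$ is invariant under reflection. So the separation numbers of $f$ and $f^{-1}$ coincide, and it remains to treat $m\geq1$.

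For $f^m$ with $m\geq 1$, we compare the sets
\[
E=\dsep(f,\delta,x,y), \qquad E_m=\dsep(f^m,\delta,x,y)=\{\ell : m\ell\in E\}.
\]
Uniform continuity of $f,\dots,f^{m-1}$ gives, for each $\delta>0$, some $\delta'>0$ such that
\[
d(f^{j}u,f^{j}v)\geq\delta \text{ for some } 0\leq j<m \;\Longrightarrow\; d(u,v)\geq\delta' .
\]
Hence each $i\in E$ with $i=m\ell+j$ yields $\ell\in\dsep(f^m,\delta',x,y)$, and each such $\ell$ accounts for at most $m$ elements of $E$. Counting over the windows $[-n,n]$ gives
\[
\udens(E)\leq m\,\udens\bigl(\dsep(f^m,\delta',x,y)\bigr)\cdot\tfrac1m\cdot m,
\]
and the constants here need to be tracked. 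The resulting comparison is: $(f,\delta,\nu)$-separated implies $(f^m,\delta',\nu/m)$-separated. Conversely, $E_m$ corresponds to a subset of $E$ lying in $m\Z$, so $(f^m,\delta,\nu)$-separated implies $(f,\delta,\nu/m)$-separated.

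This gives the two inequalities
\[
\Sep(f,\delta,\nu)\leq \Sep(f^m,\delta',\nu/m), \qquad \Sep(f^m,\delta,\nu)\leq\Sep(f,\delta,\nu/m).
\]
Since $\log(m/\nu)/\log(1/\nu)\to1$ as $\nu\to0$, the factor $m$ does not change the limits, and after taking suprema over $\delta$ the amorphic complexities agree.

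The only delicate point is this density bookkeeping. The windows $[-n,n]$ for $f$ correspond to windows of length about $2n/m$ for $f^m$, so one must verify that the resulting upper densities differ only by multiplicative constants depending on $m$. Since only constants in $\nu$ are lost, no sharper estimate is needed.
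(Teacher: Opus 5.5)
Your proposal is correct and takes essentially the paper's route. The paper simply says that the proofs of \cite[Prop.~3.4 \& Prop.~3.7]{FuhrmannGroegerJaeger2016} carry over verbatim to $\Z$-actions, and your argument is that transfer: the uniform-continuity and preimage argument for (i), the two density comparisons for (ii), the reflection step for $m<0$, and your justified exclusion of the degenerate case $m=0$ (where $\ac(f^0)=0$). The only blemish is the garbled display in (ii), which should read $\udens(E)\leq m\,\udens(\dsep(f^m,\delta',x,y))$; this gives exactly the comparison you state, and the factor $m$ can in fact be dropped.
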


In the minimal setting, there is a natural class of systems for 
studying amorphic complexity (meaning that they always have finite separation numbers). 
This class, in particular, includes all substitution systems with discrete spectrum (see the next two sections).  
We call a dynamical system $(X,f)$ \emph{(Besicovitch-) mean equicontinuous} if, for every $\nu > 0$, 
there exists $\delta > 0$ such that for all $x, y \in X$ with $d(x,y) < \delta$, we have 
\begin{align}\label{def:Besicovitch_pseudometric}
    D_B(x,y) =
    \varlimsup\limits_{n\to\infty}\frac{1}{2n+1}\sum\limits_{i=-n}^{n}d(f^i(x),f^i(y))<\nu.
\end{align}

For the next theorem, see the discussion in \cite[Sec.~4]{FuhrmannGroeger2020} and \cite[Sec.~3]{FuhrmannGroegerJaegerKwietniak2023}.  

\begin{theorem}\label{thm:equivalence_mean_equi_fin_sep_num}  
   Assume $(X,f)$ is minimal.  
    Then $(X,f)$ is mean equicontinuous if and only if it has finite separation numbers.  
\end{theorem}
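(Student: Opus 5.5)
The plan is to prove both implications through an intermediate characterisation: for a minimal system, finiteness of the separation numbers should amount to the Besicovitch pseudometric $D_B$ defined in \eqref{def:Besicovitch_pseudometric} being ``totally bounded in a uniform way''. Let $d_B$ denote $D_B$ with $d$ replaced by the indicator $\fp[d\geq\delta]$, that is, $d_B^\delta(x,y)=\udens(\dsep(f,\delta,x,y))$. Then $x,y$ are $(f,\delta,\nu)$-separated exactly when $d_B^\delta(x,y)\geq\nu$. Moreover, $d_B^\delta\leq D_B/\delta$, and $D_B\leq \delta+\operatorname{diam}(X)\, d_B^\delta$. So finite separation numbers say precisely that $(X,d_B^\delta)$ is totally bounded for every $\delta$. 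By the two inequalities, this is equivalent to $(X,D_B)$ being totally bounded.

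\emph{Mean equicontinuity implies finite separation numbers.} Fix $\delta,\nu$ and take $\nu'=\delta\nu/2$. Mean equicontinuity gives $\eta>0$ with $d(x,y)<\eta\Rightarrow D_B(x,y)<\nu'$. Cover the compact space $X$ by finitely many $d$-balls of radius $\eta/2$; say there are $N$ of them. Two points in the same ball satisfy $d_B^\delta\leq D_B/\delta<\nu$, so they are not $(f,\delta,\nu)$-separated. Hence $\Sep(f,\delta,\nu)\leq N$. Minimality is not needed for this direction.

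\emph{Finite separation numbers imply mean equicontinuity.} This is the main obstacle. By the reduction above, $(X,D_B)$ is totally bounded. The task is to upgrade this to continuity of $D_B$ with respect to $d$ at every point, uniformly; since $X$ is compact, pointwise continuity of $\mathrm{id}\colon(X,d)\to(X,D_B)$ suffices.

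I would first prove continuity at some point, using minimality. Given $\nu>0$, choose a maximal $\nu$-separated set $x_1,\dots,x_N$ for $D_B$, so the $D_B$-balls $B_i$ of radius $\nu$ around the $x_i$ cover $X$. $D_B$ is $f$-invariant, hence $d$-upper semicontinuous is not available directly. Instead, I would use that $D_B(\cdot,x_i)$ is a $\limsup$ of continuous functions. This makes $\{y: D_B(y,x_i)\leq\nu\}$ a $G_{\delta\sigma}$-type set; more precisely, each set $\{y: D_B(y,x_i)<\nu\}$ has the Baire property. By the Baire category theorem one of them is nonmeagre and hence comeagre in some open set $U$.

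Next, shift $f$-invariance together with minimality transports this: finitely many iterates $f^{m}(U)$ cover $X$, and $D_B(f^m y,f^m x_i)=D_B(y,x_i)$. This yields that on a comeagre set the map $[\cdot]_{D_B}$ is continuous. Using invariance once more, one gets continuity at every point of a dense $G_\delta$. A standard argument for minimal systems then shows the set of mean equicontinuity points is either empty or all of $X$: it is invariant, and by the triangle inequality for $D_B$ it is closed under limits in a way that, combined with minimality, forces it to be all of $X$. I expect that this last ``all or nothing'' step needs the most care. If it resists, I would instead invoke the argument referenced in \cite[Sec.~4]{FuhrmannGroeger2020} and \cite[Sec.~3]{FuhrmannGroegerJaegerKwietniak2023}: there, total boundedness of $D_B$ gives an equicontinuous factor $[X]_{D_B}$, and minimality makes the factor map continuous.
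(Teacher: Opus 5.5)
Your reduction and the easy direction are correct, but the converse has a genuine gap. The inequalities $\delta\, d_B^\delta\leq D_B\leq \delta+\operatorname{diam}(X)\,d_B^\delta$ do show that finite separation numbers are equivalent to total boundedness of $(X,D_B)$, and your covering argument proves ``mean equicontinuous $\Rightarrow$ finite separation numbers''. The paper proves neither direction; it simply refers to \cite[Sec.~4]{FuhrmannGroeger2020} and \cite[Sec.~3]{FuhrmannGroegerJaegerKwietniak2023}.

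The gap is the sentence ``using invariance once more, one gets continuity at every point of a dense $G_\delta$''. The Baire argument gives a nonempty open $U$ and a comeagre $A\subseteq U$ with $D_B(a,a')<2\nu$ for $a,a'\in A$. This yields only continuity of the \emph{restriction} of $[\cdot]_{D_B}$ to a comeagre set. Continuity at a point $y$ requires control of $D_B(y,z)$ for \emph{all} $z$ near $y$, including $z$ in the exceptional meagre set. Invariance cannot supply this, because $f$ maps exceptional sets to exceptional sets.

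In fact, the properties of $D_B$ your argument actually uses cannot suffice. Those properties are the Baire property of $D_B(\cdot,x_i)$ (the only consequence of the limsup structure you use), $f$-invariance, total boundedness, and minimality. Here is a counterexample with all of them:
\begin{itemize}
\item Take an irrational rotation $x\mapsto x+\theta$ on $\mathbf{R}/\Z$, let $O$ be one orbit, and fix $c\notin\Z$.
\item Put $\pi(x)=x$ for $x\notin O$ and $\pi(x)=x+c$ for $x\in O$.
\item Then $\rho(x,y)=\|\pi(x)-\pi(y)\|$ is a rotation-invariant, Borel, totally bounded pseudometric on a minimal system.
\item Its restriction to the comeagre set $(\mathbf{R}/\Z)\setminus O$ is continuous.
\item Yet $\mathrm{id}\colon(\mathbf{R}/\Z,d)\to(\mathbf{R}/\Z,\rho)$ has no point of continuity: every $x$ is approached by points of $O$ (or of its complement) whose $\rho$-distance to $x$ tends to $\|c\|>0$.
\end{itemize}
So any valid proof must exploit the averaging structure of $D_B$, and yours never does.

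You have also mislocated the difficulty. The ``all or nothing'' step you worried about is easy. Let $E_\varepsilon$ be the set of points having a neighbourhood $V$ with $D_B(y,z)<\varepsilon$ for all $y,z\in V$. Each $E_\varepsilon$ is open and $f$-invariant, so by minimality it is either $\emptyset$ or $X$. If all $E_\varepsilon$ equal $X$, a Lebesgue-number argument gives mean equicontinuity.

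The entire content of the hard direction is therefore to show $E_\varepsilon\neq\emptyset$. Equivalently, you must show that a minimal system in which every nonempty open set contains pairs at $D_B$-distance $\geq\varepsilon$ (a mean sensitive system) has infinite separation numbers. That is the missing idea.

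Your fallback does not fill the gap. Continuity of $[\cdot]_{D_B}$ on $X$ is exactly mean equicontinuity, so ``minimality makes the factor map continuous'' restates the theorem. The example above shows that minimality alone does not make an equivariant Baire-measurable map into an isometric system continuous. Deferring to \cite{FuhrmannGroeger2020,FuhrmannGroegerJaegerKwietniak2023} is legitimate and is what the paper does. The Baire-category sketch, however, should not be offered as a proof of the converse.
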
  

\begin{remark}\label{rem:folner}
    Let us note that if a dynamical system is mean equicontinuous, then the $\limsup$ in \ref{def:Besicovitch_pseudometric} is in fact a limit. Moreover, in this case, the limiting behaviour of \ref{def:Besicovitch_pseudometric} is independent of the specific choice of the Følner sequence $(\{-n,\ldots,n\})_{n\in\N}$ (see, for example, the brief discussion following \cite[Thm.~1.2]{FuhrmannGroegerLenz2022}).

    Additionally, for a mean equicontinuous system $(X,f)$, the definition of amorphic complexity (via the definition of the upper density) is independent of the particular Følner sequence used, see \cite[Thm.~4.1]{FuhrmannGroegerJaegerKwietniak2023}. In particular, one could also use the standard one-sided Følner sequence $(\{0,\ldots,n\})_{n\in\N}$, which is equivalent to considering the amorphic complexity of $(X,f)$ as an $\N$-action, see \cite{FuhrmannGroegerJaeger2016}.
\end{remark}

Finally, for the full shift $(\Sigma,T)$, where $\Sigma$ is equipped with the 
Cantor metric $d_\beta$ with $\beta > 1$, it is not difficult to show that for 
each $\delta > 0$  
\begin{align}\label{eq:besicovitch metric and density of discrepancy}
    D_{\delta}(x,y) = \udens(\dsep(T,\delta,x,y)) \quad \textnormal{for } x,y\in\Sigma
\end{align}
defines a pseudometric and that $(D_\delta)_{\delta\in(0,1]}$ forms a family of 
Lipschitz-equivalent pseudometrics, see Lemma 6.2 and its proof in 
\cite{FuhrmannGroeger2020}. Following standard procedure, we introduce an 
equivalence relation on $\Sigma$ by identifying $x,y \in\Sigma$ whenever 
$D_{\delta}(x,y) = 0$ and denote by $[\cdot]$ the corresponding quotient 
mapping. Since this relation is independent of the choice of $\delta \in (0,1]$, 
the mapping is well defined; we refer to $[X]$ as \emph{the Besicovitch space} of $X$.

\begin{theorem}[{\cite[Thm.~6.5]{FuhrmannGroeger2020}}]\label{thm: amorphic complexity and box dimension}
    Suppose $(X,T)$ is a subshift with finite separation numbers. Then the 
    (lower and upper) box dimension of the subset $[X]$ of $[\Sigma]$ 
    (equipped with $D_\delta$) is independent of $\delta\in(0,1]$. 
    Furthermore,
    \begin{align*}
        \uac(T|_X) = \underline\dim_B\left(\left[X\right]\right)
        \quad \textnormal{and} \quad
        \oac(T|_X) = \overline\dim_B\left(\left[X\right]\right).
    \end{align*}
\end{theorem}

Note that the last assertion applies particularly to minimal mean equicontinuous 
subshifts by Theorem \ref{thm:equivalence_mean_equi_fin_sep_num}. 
In fact, it is straightforward to show that a subshift $(X,T)$ is mean 
equicontinuous iff the restriction $\left.[\cdot]\right|_X$ is continuous, 
see \cite[Prop.~6.8]{FuhrmannGroeger2020}.

\begin{remark}
    From now on, for a symbolic system $(X,T)$, we will—when there is no danger of confusion—use the notation $\uac(X) = \uac(T|_X)$ and $\oac(X) = \oac(T|_X)$, as well as $\ac(X)$ if both values coincide.
\end{remark}

\begin{remark}\label{rem:inf_tiling_and_beyond_mean_eq}
    In the context of more general group actions, the authors of \cite{BaakeGaehlerGohlke2025} study amorphic complexity—for which they suggest to use the name \emph{orbit separation dimension}—for actions induced by primitive self-similar inflation tilings. 
    They determine it for several notable examples, including the recently discovered Hat tiling \cite{SmithMyersKaplanGoodman-Strauss2024}.

    To extend the analysis of asymptotic separation numbers beyond mean equicontinuous systems, Kasjan and Keller propose in \cite{KasjanKeler2025} to study (Besicovitch) covering numbers of symbolic orbits of generic points. 
    Their approach applies to the wider class of symbolic systems with discrete spectrum and reduces to the study of asymptotic separation numbers in the mean equicontinuous case. 
    It allows them to distinguish between several Toeplitz subshifts associated with $\mathcal{B}$-free systems, which were previously indistinguishable using classical topological invariants.
\end{remark}

\subsection{Substitutions}\label{sec:substitution}

	 Let $\A$ be a finite alphabet. 
	 A \emph{substitution} $\varphi\colon\mathcal{A}\to\mathcal{A}^{*}$ is a function that assigns to each letter $a\in\A$ some (possibly empty) word over $\A$.
	A letter $a\in\A$ is \emph{erasing} w.r.t.\ $\varphi$ if $\varphi^n(a)=\epsilon$ for some $n\geq 1$; otherwise it is \textit{nonerasing}.
	   A \textit{coding} is an arbitrary map $\pi\colon \mathcal{A}\rightarrow \mathcal{B}$ between alphabets $\mathcal{A}$ and $\mathcal{B}$.
	  A substitution $\varphi\colon \mathcal{A}\rightarrow \mathcal{A}^*$ induces natural maps 
\[\varphi\colon \mathcal{A}^{*} \to \mathcal{A}^{*}\quad \text{and} \quad 
\varphi\colon \mathcal{A}^{\Z} \to \mathcal{A}^{\Z}\]
by concatenation (with the convention of using the same symbol for these extensions); in the latter case it is given for $x=(x_n)_n\in\A^{\Z}$ by
	\[\varphi(x) = \dots\varphi(x_{-1}).\varphi(x_0)\varphi(x_1)\dots,
	\]
	where the dot denotes the 0th position.
	In the same way we extend a coding between two alphabets to a map between words or sequences.
	A sequence $x\in\A^{\Z}$ is called a \emph{fixed point of} $\varphi$ if $\varphi(x)=x$.

	   Let $M$ be a square matrix with nonnegative entries. 
	   Recall that $M$ is called \emph{primitive} if $M^n$ has strictly positive entries for some $n\geq 1$.
	    We say that $M$ is in \emph{normal primitive (Frobenius) form} if it is conjugate via some permutation matrix to a lower block diagonal matrix with  square diagonal blocks $B_1,\dots ,B_t$ (\emph{primitive components}), which are either primitive or a $1\times 1$ zero matrix. 
	 Any matrix $M$ has some power $M^m$, $m\geq 1$ which is in normal primitive form, see e.g.\ \cite[Sec.\ 4.4 \& 4.5]{bookMarcusLind} for an excellent exposition. 
	For a nonnegative matrix $M$, we let $||M||_1$ denote the maximum column sum of $M$.

	   The \emph{incidence matrix} of $\varphi$ is given by $M_{\varphi}= (|\varphi(b)|_a)_{a,b\in \mathcal{A}}$, where $|\varphi(b)|_a$ denotes the number of occurrences of the letter $a$ in $\varphi(b)$. 
	   Note that $M_{\varphi^{n}}=M_{\varphi}^n$ for any $n\in\N$.
	 A substitution is called \emph{primitive} (resp.\  \emph{in normal primitive form}) if its incidence matrix is primitive (in normal primitive form).
	     Since $M_{\varphi}$ is nonnegative, it has the dominant (Perron--Frobenius) eigenvalue $\lambda$, i.e.\ a real nonnegative algebraic integer $\lambda$ such that $\abs{\lambda'}\leq \lambda$ for any other eigenvalue $\lambda'$ of $M_{\varphi}$. 
	    By abuse of terminology, we will call $\lambda$ also the dominant eigenvalue of $\varphi$. 
	    In the case when $\varphi$ is primitive, $\lambda$ describes the growth rate of each letter in $\mathcal{A}$ under $\varphi$. 
	    In general, the growth rates of letters can be described in terms of the dominant eigenvalues of the primitive components of  $M_{\varphi}$.

\begin{lemma}\cite[Lem.\ 9]{DurandRigo2009}\label{lem:growth_of_letters}\mbox{}
\begin{enumerate}
\item\label{lem:growth_of_letters1} For a nonnegative matrix $A$, 
\begin{equation}\label{eq:Mfor}
||A^n||_1 = \bigT (\lambda^n n^d),
\end{equation} 
where $\lambda$ is the dominant eigenvalue of $A$ and $d\in\N$.
\item Let $\varphi\colon\mathcal{A}\to\mathcal{A}^{*}$ be a substitution, and let $m\geq 1$ be such that $\varphi^m$  is in normal primitive  form. 
	For each nonerasing letter $a\in\mathcal{A}$ there exist unique algebraic integer $\lambda(a)\geq 0$,  integer $d(a)\geq 0$, and $c(a)>0$ such that
\begin{equation}\label{eq:growthtype}
\abs{\varphi^{mn}(a)} \sim c(a)\cdot(mn)^{d(a)}\cdot\lambda(a)^{mn}.
\end{equation}
	If $a\in\A$ is erasing we put $\lambda(a)=0$, $d(a)=1$, $c(a)=1$; in this case $\abs{\varphi^n(a)}=0$ for all $n$ large enough.
\end{enumerate}
\end{lemma}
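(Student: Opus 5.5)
We will prove both parts by working with a power of the matrix in normal primitive (Frobenius) form and counting paths through its primitive components. Part~(\ref{lem:growth_of_letters1}) then follows from a coarse, $\bigT$-level version of the path count, and part~(ii) from a sharpened version that tracks exact leading constants.

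\emph{Reduction for part (i).} First choose $m\geq 1$ with $A^m$ in normal primitive form, with diagonal blocks $B_1,\dots,B_t$. Let $\lambda_i$ be the dominant eigenvalue of $B_i$. The dominant eigenvalue of $A^m$ is $\lambda^m=\max_i\lambda_i$. Suppose we know $\|A^{mq}\|_1=\bigT(\lambda^{mq}q^d)$. We pass to arbitrary $n=mq+r$ with $0\le r<m$ by sandwiching with the submultiplicativity of $\|\cdot\|_1$. The upper bound comes from $\|A^{mq+r}\|_1\le \|A^r\|_1\|A^{mq}\|_1$. The lower bound comes from $\|A^{m(q+1)}\|_1\le \|A^{m-r}\|_1\|A^{mq+r}\|_1$, using $\|A^{m-r}\|_1>0$. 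If $A$ is nilpotent then $\lambda=0$ and the statement is trivial under the obvious conventions, so we may assume this.

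\emph{Path count in normal form.} Block-lower-triangularity means the $(I,J)$ block of $(A^m)^q$ is a sum over chains of components $J=i_1\to i_2\to\dots\to i_s=I$ in the reduced graph. The chains have length at most $t$. Each chain contributes a sum, over compositions $q=q_1+\dots+q_s+(s-1)$, of products
\[
B_{i_s}^{q_s}\,C_{s-1}\cdots C_1\,B_{i_1}^{q_1},
\]
where the $C_j$ are the off-diagonal blocks. By Perron--Frobenius, for each primitive block we have $B_i^{p}=\bigT(\lambda_i^{p})$ entrywise, and $B_i^p=0$ for $p\geq 1$ if $B_i$ is the $1\times1$ zero block. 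Hence a chain containing exactly $e$ components with $\lambda_i=\lambda^m$ contributes $\bigT(q^{e-1}\lambda^{mq})$: the number of compositions distributing the $q$ steps among those $e$ components is of order $q^{e-1}$, while components with smaller eigenvalue contribute convergent geometric factors. All entries are nonnegative, so there is no cancellation. Taking the maximum over chains and columns gives the claim with $d=\max e-1$.

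\emph{Part (ii).} Here $|\varphi^{mn}(a)|=\mathbf 1^{T}M_\varphi^{mn}e_a$, i.e.\ the column sum indexed by $a$. The same chain decomposition applies, now restricted to chains starting at the component of $a$. For a nonerasing letter the sum is not eventually zero, so $\lambda(a)$ is the maximal $\lambda_i$ among the components reachable from $a$ along nonzero paths. To upgrade $\bigT$ to $\sim$, we use the sharper Perron--Frobenius statement for primitive blocks, $B_i^p=\lambda_i^p(P_i+o(1))$, where $P_i$ is a positive rank-one projection. Substituting this into the chain sums:
\begin{itemize}
\item sums over steps spent in non-maximal components converge to fixed geometric series;
\item the sum over the steps spent in the $e$ maximal components becomes a Riemann sum asymptotic to $q^{e-1}/(e-1)!$ times a positive product of the terms $\mathbf 1^TP_{i}C\cdots$.
\end{itemize}
Summing over the finitely many chains that attain the maximal $e$ gives $c(a)>0$ with $|\varphi^{mn}(a)|\sim c(a)\,n^{d(a)}\lambda(a)^{mn}$. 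Rewriting $n^{d(a)}$ as $(mn)^{d(a)}$ only rescales $c(a)$. Uniqueness of $(\lambda(a),d(a),c(a))$ is immediate from the asymptotic. The erasing case is trivial, since such letters lie in chains ending at zero blocks.

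\emph{Main obstacle.} The delicate step is the exact constant in (ii). We must justify interchanging the $o(1)$ errors from $B_i^p$ with the growing number of compositions. We do this by splitting off the compositions where some maximal block receives fewer than $\sqrt q$ steps; these contribute $o(q^{e-1}\lambda^{mq})$. Positivity of the limiting constant must also be verified, which needs $P_i>0$ and nonnegativity of the connecting blocks along at least one nonzero chain.
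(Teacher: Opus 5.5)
Your argument is correct. Note that the paper gives no proof of this lemma: it is imported from \cite[Lem.~9]{DurandRigo2009}. So your proposal is a self-contained substitute for a citation rather than an alternative to an argument in the text. The route you take is the standard one for such growth statements: expand the block lower-triangular power into chains of components, use the primitive Perron--Frobenius limit $B_i^p/\lambda_i^p\to P_i>0$ on each diagonal block, and count compositions of the exponent among the components of maximal Perron root.

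What your route buys over the bare citation is transparency on three points. It shows why (ii) is claimed only along multiples of $m$: for an irreducible but periodic block the normalised powers oscillate. For example, $a\mapsto b$, $b\mapsto aa$ gives $|\varphi^n(a)|=2^{\lfloor n/2\rfloor}$, which admits no asymptotic of the form $c\,n^d\lambda^n$. It also shows why (i) nevertheless holds along all $n$, via your submultiplicativity sandwich. Finally, it identifies $d(a)+1$ as the maximal number of components with Perron root $\lambda(a)^m$ on a path from $a$ in the reduced graph, which is the description one actually uses when reading off growth types of letters of $\varphi_s$.

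When you write it up, tighten the following details.

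\emph{Lower bounds on block powers.} Entrywise $B_i^p=\Theta(\lambda_i^p)$ holds only for $p\geq p_0$; already $B_i^0=I$ has zero entries. So the lower bound in (i) should count only compositions giving every primitive block on the chain at least $p_0$ steps. There are still of order $q^{e-1}$ many of these.

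\emph{Nonzero chains.} Restrict to chains whose connecting blocks are all nonzero. For these the product is nonzero, because each intermediate factor is either entrywise positive or a $1\times 1$ identity (a zero block visited for $0$ steps). This is what makes both the lower bound in (i) and $c(a)>0$ in (ii) work.

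\emph{The $m$-th root.} Since the $\lambda_i$ are Perron roots of blocks of $M_\varphi^m$, you get $\lambda(a)^m=\max\lambda_i$ over reachable components, not $\lambda(a)=\max\lambda_i$. Then $\lambda(a)$ is a root of $x^m-\lambda_i$, hence an algebraic integer. This is part of the claim and you did not address it.

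\emph{Nonerasing letters.} For a nonerasing letter, say explicitly that some primitive block is reachable from $a$. Otherwise every path from $a$ has length less than the number of blocks and $a$ would be erasing. Hence $\lambda(a)\geq 1>0$, which your uniqueness argument needs.
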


\begin{definition}\label{def:typegrowth}
	For each $a\in\A$, we call the  pair $(\lambda(a),d(a))$ from Lemma \ref{lem:growth_of_letters} the \textit{growth type} of $a$ and the number $\lambda(a)$ the \textit{growth rate} of $a$.
	The \emph{growth type} (resp.\ \textit{growth rate}) of $\varphi$ is the maximal growth type (resp.\ maximal growth rate) of its letters.\footnote{Of course, the pair $(\lambda,\d)$ is the growth type of $\varphi$ if and only if  it satisfies \eqref{eq:Mfor} with $A=M_{\varphi}$.} 
	A letter $a\in A$ is of \emph{maximal growth type} (resp.\ \emph{maximal growth rate}) if its growth type (resp.\ growth rate) is the same as the growth type (resp.\ growth rate) of $\varphi$. 
	We will call letters of maximal growth rate simply of \emph{maximal growth}.
\end{definition}

\subsection*{Automatic sequences} Let $k\geq 2$. A substitution $\varphi\colon\A\to\A^{*}$ is \textit{of (constant) length $k$} if $|\varphi(a)|=k$ for each $a\in \mathcal{A}$.
	 It will be useful to introduce the following terminology originating from computer science: a sequence is called $k$-\emph{automatic} if it is a coding of some fixed point of a substitution of constant length $k$; see \cite{bookAlloucheShallit} for a general reference.

\begin{definition}\label{def:kernelmaps1}
For a sequence $x$ and $t\geq 1$, we let 
\[\AP_t(x)=\{(x_{i+tn})_n \mid 0\leq i<t\}
\] denote the set of sequences that appear in $x$ along the arithmetic progressions of difference $t$. 
	For $k\geq 2$, we let
	\[\mathrm{Ker}_k(x) = \bigcup \{\AP_{k^{m}}(x)\mid m\in\N\} = \{(x_{i+k^mn})_n \mid m\in\N,\ 0\leq i<k^m  \}
	\]
	be  the $k$-\emph{kernel} of $x$.
\end{definition}

	A classical result of Cobham says that a sequence is $k$-automatic if and only if its $k$-kernel is finite (see \cite{Cobham1972} or \cite[Thm.\ 6.6.2]{bookAlloucheShallit}).
Thus, it is clear that for a $k$-automatic sequence $x$, any element of $\Ker_{k}(x)$ is also $k$-automatic.

\begin{definition}\label{def:kernelmaps2}
 For a substitution $\varphi \colon \mathcal{A}\rightarrow \mathcal{A}^*$ of length $k\geq 2$ and each $i=0,\ldots,k-1$ we define the map
\[\varphi_i\colon \mathcal{A} \to  \mathcal{A},\quad a\mapsto \varphi(a)_i,\]
that sends a letter $a$ to the $i$th letter of $\varphi(a)$. 
	We extend $\varphi_i$ by concatenation to a map on $\mathcal{A}^{\Z}$.
	For each $m\in \N$ we define 
	\[\C_m(\varphi) = \{\varphi^m_j(\A)\mid 0\leq j<k^m\}\quad\text{and}\quad \C(\varphi)=\{\varphi^m_j(\A)\mid 0\leq j<k^m,\ m\in\N\};
	\]
we refer to the elements of $\C_m(\varphi)$ as  \emph{column sets} of $\varphi^m$.
	We say that $\varphi$ has a \emph{coincidence} if $\C(\varphi)$ contains a one-element set $\{a\}$ for some $a\in\A$.
	\end{definition}
	Clearly, the set $\C(\varphi)$ is finite.
	If $\varphi$ is primitive and $\C(\varphi)$ contains a one-element set $\{a\}$ for some $a\in\A$, then it contains all the sets $\{a\}$, $a\in \A$.
	Visually, the set $\C_m(\varphi)$ corresponds to the union of the sets of columns that appear in the $|\mathcal{A}|\times k^m$ arrays formed by the words $\varphi^m(a)$, $a\in\mathcal{A}$.
	 The substitution $\varphi$ has a coincidence if for some $m\in\N$, we see in the array a column consisting only of one letter.
	 If $x$ is a fixed point of $\varphi$, then $\Ker_k(x) =\{\varphi^m_{i}(x)\mid m\in \N,\ 0\leq i<k^{m}\}$.
	In fact, for any  $y\in \Ker_{k}(x)$ and $m\geq 1$ we have
	\begin{equation}\label{eq:APofkernelelements}\AP_{k^{m}}(y)=\{\varphi^m_{i}(y)\mid 0\leq i<k^{m}
	\}.
	\end{equation}

\subsection{Systems defined by substitutions}\label{sec:subsstsems}

	Each substitution $\varphi$ gives rise to a subshift
\[X_{\varphi}=\{x\in \mathcal{A}^{\Z}\mid \text{ each word appearing in } x \text{ appears in } \varphi^n(a) \text{ for some } a\in\mathcal{A}, n\geq 0\}.\] 
	If $\varphi$ is primitive, the subshift $X_{\varphi}$ is minimal. 
	If $\varphi$ is of constant length $k\geq 2$, then $X_{\varphi}$ is minimal if and only if $\varphi$ is primitive \cite[Sec.\ 5.2]{bookQueffelec}.
	Furthermore, for a primitive $\varphi$, $X_{\varphi}=X_{\varphi^n}$ for any $n\geq 1$ and by taking a power of $\varphi$ if necessary, we may always assume that $\varphi$ has a fixed point $x\in X_{\varphi}$.

    One can also consider a somewhat more general setup when the
 system is given by a substitution $\varphi\colon \A\to\A^*$ together with a coding $\tau\colon\A\to\mathcal{B}$.
 The subshift $X$ generated by $(\varphi,\tau)$ is given simply 
 by $X=\tau(X_{\varphi})$\footnote{From a dynamical point of view considering all systems $X_{\varphi}$ together with codings amounts to considering all systems $X_{\varphi}$ together with their topological factors.}; if one can take $\varphi$ to be of constant length $k$, then we call $X$ a $k$-\emph{automatic} system. 
    This terminology is justified by the fact that if $X$ is 
 transitive, then $X$ is $k$-automatic if and only if there exists a $k$-automatic sequence $x$ such that $X$ is the orbit closure of $x$ \cite[Lem.\ 2.10]{ByszewskiKoniecznyKrawczyk}.
    It is well known that there exists a minimal automatic 
 sequence $x$\footnote{i.e.\ $x$ such that its orbit closure is a minimal system} which is not a fixed point of any substitution: the prototypical example is the Rudin--Schapiro sequence \cite[Ex.\ 26]{AlloucheTaxonomy}. 
    Yet, due to the  result of M\"ullner and Yassawi \cite{MullnerYassawi2021} this cannot happen on the level of (minimal) dynamical systems; see Theorem \ref{thm:reemclemens} below. 
    For this reason we will restrict ourselves in our treatment only to systems of the form $X_{\varphi}$ although some of the definitions could be made more general. 

    \begin{theorem}\cite[Thm.\ 22]{MullnerYassawi2021}\label{thm:reemclemens}
     Any infinite factor of a minimal $k$-automatic system is conjugate to a system $X_{\varphi}$ given by a primitive substitution of constant length $k^m$ for some $m\geq 1$.
    \end{theorem}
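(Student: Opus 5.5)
The statement is quoted from \cite[Thm.~22]{MullnerYassawi2021}. I read ``infinite factor'' as infinite \emph{subshift} factor, that is, an expansive one. Otherwise the statement would fail, since the $k$-adic odometer is an infinite equicontinuous factor that is not conjugate to any subshift.

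\emph{Step 1: reduce to a $1$-block coding.} Let $X=\tau(X_\psi)$ with $\psi$ primitive of length $k$, and let $h\colon X\to Y$ be a factor map onto an infinite subshift $Y$. The composition $h\circ\tau$ is a factor map $X_\psi\to Y$, so by Curtis--Hedlund--Lyndon it is a sliding block code of some radius $r$. I then pass to the $(2r+1)$-block presentation of $X_\psi$. Its alphabet $\mathcal{B}$ consists of the legal words of length $2r+1$, and $\psi$ lifts to a primitive substitution $\theta$ of the same length $k$ (the standard collared substitution). This makes the factor map a letter-to-letter coding $\pi\colon\mathcal{B}\to\mathcal{C}$ with $Y=\pi(X_\theta)$. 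Replacing $\theta$ by a power, I fix a fixed point $x$ of $\theta$. Then $y=\pi(x)$ is an aperiodic $k$-automatic sequence, so $\Ker_k(y)$ is finite by Cobham's theorem.

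\emph{Step 2: $k$-adic structure on $Y$ (main obstacle).} I need to show that, for every $n$ (or at least for a cofinal set of $n$ after a fixed finite height correction), there is a continuous map $Y\to\Z/k^n\Z$ intertwining $T$ with $+1$. Equivalently, the decomposition of $y$ into blocks $y_{[ik^n,(i+1)k^n)}$ must be recognizable from a bounded window. This is the crux. The odometer factor of $X_\theta$ need not a priori descend through $\pi$, since $\pi$ may collapse the positional information.

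I would attack this with the finiteness of the kernel and a pigeonhole argument. Choose $m$ such that every element of $\Ker_k(y)$ already appears in $\AP_{k^{m}}(y)$ up to the stabilization of the kernel. If no recognizability held at level $k^{m}$, then two distinct residues $i\neq j \bmod k^m$ would carry identical long words of $y$ along arbitrarily long stretches. Iterating via \eqref{eq:APofkernelelements}, this would force $y$ to agree with a shift of itself on sets of density one. Together with minimality and the finiteness of $\Ker_k(y)$, that would yield periodicity of $y$, contradicting that $Y$ is infinite. A possible finite height $h$ (residues coprime to $k$) is absorbed later by passing to a power $k^{m}$ and recoding.

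\emph{Step 3: build $\varphi$.} Using the recognizable $k^m$-block decomposition from Step 2, I recode $Y$ as a subshift $Y'$. Its alphabet $\mathcal{D}$ consists of collared blocks $y_{[ik^m-R,\,(i+1)k^m+R)}$, with $R$ the recognizability radius, and $Y'$ is conjugate to $(Y,T^{k^m})$ suspended appropriately. To return to a $\Z$-action conjugate to $Y$ itself, I instead use the alphabet of collared single letters $y_{[i-R,i+R]}$. The next letter under the substitution is determined by the block structure.

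I then define $\varphi$ on a collared letter as follows. Choose any $\theta$-preimage in $x$. Apply $\theta^m$ to it. Push the result through $\pi$ and re-collar. Recognizability, together with the finiteness of $\Ker_k(y)$, guarantees that the result is independent of the chosen preimage. Hence $\varphi$ is a well-defined substitution of length $k^m$. It is primitive because $\theta$ is, and $X_\varphi$ is conjugate to $Y$ via the collaring map and its inverse, the central-letter projection.
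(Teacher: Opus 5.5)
The paper gives no proof of this statement. It is quoted from \cite[Thm.~22]{MullnerYassawi2021}, so your outline has to stand on its own. Your reading of ``factor'' as \emph{subshift} factor is correct, and Step~1 is fine. There is, however, a genuine gap: both steps that carry the theorem are asserted rather than proved.

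\textbf{Step~2.} This step is exactly the claim that $Y$ keeps the full odometer $\Z_k$ as a factor. It is also the consequence the paper extracts from the theorem in Proposition~\ref{prop:metricsequivalnet}, so it is not a preliminary but a core part of the content. Failure of recognizability at level $k^m$ means precisely that there are $x',x''\in X_\theta$ with $\pi(x')=\pi(x'')$ whose $k^m$-skeletons lie in different residue classes. That is information about coinciding configurations at particular positions. Nothing you wrote turns it into ``$y$ agrees with a shift of itself on a set of density one''. Moreover, \eqref{eq:APofkernelelements} describes progressions of kernel elements of a \emph{fixed point} of the substitution, not of a coding $\pi(x)$. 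The closing appeal to ``minimality and finiteness of $\Ker_k(y)$'' yielding periodicity is not an argument. Infiniteness of $Y$ does not make this automatic either: a priori the MEF of $Y$ is just some quotient of $\Z_k\times\Z/h\Z$. For $k=6$, say, nothing in your sketch excludes that $\pi$ kills the $3$-adic part while $Y$ stays infinite. As a side remark, since $\gcd(h,k)=1$ the height cannot be ``absorbed'' by passing to powers of $k$. This is not needed anyway, because the target $X_\varphi$ may have height $>1$.

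\textbf{Step~3.} This step contains a second, independent gap. ``Independence of the chosen preimage'' says exactly that $\pi(x')=\pi(x'')$ implies $\pi(\theta^m x')=\pi(\theta^m x'')$ on $X_\theta$. Equivalently, every $k^m$-progression $\pi(\theta^m_j(x))$ of $y$ must be a sliding block image of $y$ itself, with the same indexing. Recognizability only tells you where the $k^m$-blocks of $y$ begin, not that the block at $k^m i$ is determined by the window of $y$ around $i$. Finiteness of $\Ker_k(y)$ only bounds how many such progressions there are. For a given $m$ there is no reason for the fibre relation of $\pi$ to be $\theta^m$-invariant. If, for instance, $\theta$ interchanges the two coordinates of a fibre product and $\pi$ is one projection, then $\pi\circ\theta$ is governed by the other coordinate, which need not be a sliding block image of the first. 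So what is missing is an actual mechanism showing that some power $\theta^m$ maps $\pi$-fibres into $\pi$-fibres, or else a different construction of $\varphi$. Together with Step~2, this is where the real work of the theorem lies. Granting it, the remaining claims of Step~3 (primitivity of $\varphi$ and conjugacy via collaring) would be fine.
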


    It is well known that any system $X_{\varphi}$ given by a primitive substitution $\varphi$ is uniquely ergodic; we will say that $X_{\varphi}$ has \emph{discrete spectrum} if it has discrete spectrum with respect to this unique invariant measure.
	The spectral theory of systems generated by primitive constant length substitutions is well understood due to the work of Dekking. 
	We briefly summarise the main facts here and refer to \cite{Dekking1978} for more details.

\begin{definition}\label{def:height}
Let $\varphi\colon\A\to\A^*$ be a primitive substitution of length $k\geq 2$  with a fixed point $x$.
The \emph{height} of $\varphi$ is defined by 
	\[
		h(\varphi)=h=\max\{n\geq 1 \mid \gcd(n,k)=1\ \text{and}\ n \text{ divides } \gcd\{m\mid x_0=x_m\}\}.
	\]
	For a substitution $\varphi$ of height $h$, its \emph{pure base} is the natural substitution $\varphi'\colon\A'\to(\A')^*$ of constant length $k$  defined on the alphabet $\mathcal{A}' = \{x_{nh}x_{nh+1}\dots x_{nh+h-1}\mid n\in\Z\}$ consisting of all words of 
	length $h$ that appear at positions divisible by $h$ in $x$.
	\end{definition}

    \begin{remark}\label{rem:height}
        The height $h$ of $\varphi$ can alternatively be defined as the maximal $n\geq 1$ coprime with $k$ such that $\Z/n\Z $ is a factor of $X_{\varphi}$ (with $X_{\varphi}$ not necessarily infinite); see \cite[Lem.\ 10]{Dekking1978}.
    \end{remark}
    
	The height $h$ is bounded by both the length $k$ and the size of the alphabet $A$, and it is independent of the choice of a fixed point of $\varphi$.
	The pure base $\varphi'$ always has height 1.
		
	\begin{example}[\cite{Dekking1978}]\label{ex:heightandpurebase}
		Consider the substitution
	\[\varphi(0)=010,\quad \varphi(1)=102,\quad \varphi(2)=201\]
	of constant length $3$ with its fixed point
\[ x = \dots102010201.010102010102010201\dots
\]	
The height of $\varphi$ is $2$, $\A'=\{01, 02\}$, and the pure base of $\varphi$ is given by
	\[\varphi'(01)=01|01|02,\quad \varphi'(02)=01|02|01.\]
	\end{example}

	We let $\mathbf{Z}_k$ be the ring of the $k$-adic integers, that is,  the inverse limit 
$\mathbf{Z}_k=\varprojlim\mathbf{Z}/k^n\mathbf{Z}$
 of the inverse system of rings $(\Z/k^i\Z )$, $i\geq 1$, where the morphisms  are given by the natural quotient maps.
	The ring $\Z_k$ is a compact topological ring; we may identify $\Z_k$ with the  closed subring
\[\Z_k=\{(s_i)_i\in \prod \Z/k^i\Z\mid s_i\equiv s_{i+1} \bmod k^i\}
\]
of the direct product of $\Z/k^i\Z$, $i\geq 1$. 
	Considering $\Z_k$ with the addition of $1$ makes it into an equicontinuous topological system.
Recall also that for $h\geq 1$ the \emph{$h$-height suspension} of a system $(X,T)$ is given by $(Y, S)$, where $Y = X\times \Z/h\Z$ and 
\[S(x,i)=\begin{cases} (x,i+1)\quad &\text{for}\quad i=0,\dots,h-1\\
	 (Tx,0) \quad &\text{for}\quad i=h
	\end{cases}.\]

\begin{theorem}\label{thm:dekking}
Assume $\varphi$ is a primitive substitution of  length $k\geq 2$. 
Let $h$ be its height and $\varphi'$ its pure base.
\begin{enumerate}
    \item\label{thm:dekking1} The system $X_{\varphi}$ is conjugate to the $h$-height suspension of $X_{\varphi'}$.
    \item\label{thm:dekking2} If $X_{\varphi}$ is infinite, the MEF of $X_{\varphi}$ is given by $\Z_k\times \Z/h\Z$.
    \item\label{thm:dekking3} The system $X_{\varphi}$ has discrete spectrum iff the pure base $\varphi'$ has a coincidence.
    \item\label{thm:dekking4} We have that $\uac(X_\varphi)=\uac(X_{\varphi'})$ and $\oac(X_\varphi)=\oac(X_{\varphi'})$.
    \item\label{thm:dekking5} We have that $X_{\varphi}$ is null (resp.\ tame) if and only if $X_{\varphi'}$ is null (resp.\ tame).
\end{enumerate}
\end{theorem}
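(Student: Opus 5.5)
Parts (i)–(iii) are Dekking's results \cite{Dekking1978}, and I would cite them there. The new content is (iv) and (v), and both follow from (i) together with general facts about height suspensions.

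For (i) the plan is to code the fixed point $x$ of $\varphi$ by blocks. Fix the fixed point $x$ and let $x'$ be the corresponding fixed point of $\varphi'$, obtained by reading $x$ in blocks of length $h$ starting at positions divisible by $h$. The map $(y,i)\mapsto T^i(\text{decoding of } y)$ is continuous and commutes with the suspension map. It is onto $X_\varphi$ by minimality. Injectivity uses Remark~\ref{rem:height}: the factor map to $\Z/h\Z$ recovers $i$. For (ii) and (iii) I would simply quote Dekking.

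For (iv), let $(Y,S)$ be the $h$-height suspension of $(X',T)$ with $X'=X_{\varphi'}$. Then $S^h$ restricted to $X'\times\{0\}$ is conjugate to $T$ on $X'$, and $Y$ is the union of the $h$ copies $S^i(X'\times\{0\})$. By Proposition~\ref{prop: properties ac}(ii), $\ac(S)=\ac(S^h)$. Since $S^h$ acts as $T$ on each of the $h$ invariant clopen pieces, it remains to show that amorphic complexity of a disjoint union of finitely many conjugate copies equals that of one copy. I would argue this directly from separation numbers. On one hand, an $(S^h,\delta,\nu)$-separated set in a single copy stays separated in $Y$, giving $\ge$. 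On the other hand, points in different copies are separated with density one, while points in the same copy are separated as in $X'$. This gives $\Sep_Y\le h\cdot \Sep_{X'}(\delta',\nu)$, and the factor $h$ disappears in the logarithmic limits. One technical point needs care here: the metric on $Y$ changes $\delta$ only up to constants, which is harmless because of the $\sup_\delta$. Also, upper density for $S^h$ on the copy versus for $T$ uses the same Følner sequence, so no rescaling issue arises.

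For (v), I would use the Kerr–Li characterisation through IT/IN pairs together with the fact that tameness and nullness pass to factors, to powers, and to finite disjoint unions. Concretely, a system is tame (null) iff $S^h$ is; this holds because independence sets for $S$ restricted to a residue class mod $h$ give independence sets for $S^h$ (infinite or arbitrarily large sets have a residue class of infinite or proportional size). Then $(Y,S^h)$ is a disjoint union of $h$ copies of $(X',T)$. IT/IN pairs live in one clopen piece: a pair from different pieces has disjoint clopen neighbourhoods, and independence would force an intersection $S^{-hi}U_0\cap S^{-hj}U_1$ meeting two different pieces, which is impossible. So the pieces are tame or null iff $X'$ is. Together with (i) this gives the claim.

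I expect the main obstacle to be the bookkeeping in (iv). One must compare the Besicovitch-type densities for $S$ on $Y$ with those for $T$ on $X'$ along the subsequence $h\Z$, and make sure the two asymptotic separation numbers are the same up to bounded multiplicative constants in both $\delta$ and $\nu$.
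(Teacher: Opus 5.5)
Your proposal is correct and follows the same route as the paper. Both cite Dekking for (i)--(iii). For (iv) and (v), both use (i) to reduce to the $h$-height suspension $(Y,S)$, pass from $S$ to $S^h$ (via the proposition on powers, resp.\ a pigeonhole argument on residue classes mod $h$), and note that $(Y,S^h)$ is $h$ disjoint copies of $(X_{\varphi'},T)$. Your separation-number count for a disjoint union and your observation that IT/IN pairs of $S^h$ lie in a single clopen piece fill in what the paper leaves implicit. Two small corrections: the change of metric affects $\delta$ through uniform continuity rather than ``up to constants'' (equally harmless under $\sup_\delta$), and the comparison along $h\Z$ you flag as the main obstacle is already handled by the proposition on powers.
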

\begin{proof}
    Assertions \eqref{thm:dekking1}--\eqref{thm:dekking3} and their proofs are given in \cite{Dekking1978}. 

    By \eqref{thm:dekking1}, to prove \eqref{thm:dekking4} and \eqref{thm:dekking5} it is enough to show that for any system $(X,T)$ and $h\geq 1$, $(X,T)$ is null (resp.\ tame) if and only if its $h$-height suspension $(X\times \Z/ h\Z, S)$ is  null (resp.\ tame) and that the value of amorphic complexity does not change when taking finite suspensions.
    To see this, first note that, by Proposition \ref{prop: properties ac}, for any system $(Y,S)$ the amorphic complexities of $(Y,S)$ and $(Y,S^h)$ coincide. Likewise, an easy pigeonhole argument shows that $(Y,S)$ is null (resp.\ tame) if and only if $(Y,S^h)$ is null (resp.\ tame); see also \cite{Leonard2025} for the proof for the null case.
    The claims  follow thus from the fact that the system  $(X\times \Z/h\Z, S^h)$ is conjugate to $(X\times \Z/ h\Z, T\times \mathrm{id})$, i.e.\ to the system consisting of $h$ disjoint copies of $X$.
    (The justification for \eqref{thm:dekking4} can also be found in the proof of \cite[Thm.~7.13]{FuhrmannGroeger2020} on p.~1402.) 
\end{proof}

\section{Induced pseudometrics and discrepancy substitutions}\label{sec:discrepancytools}

\subsection{Pseudometrics related to clopen partitions}\label{sec:pseudometrics}

	For a subshift $(X,T)$, let $\mathcal P$ be a clopen partition of $X$, 
meaning that $\mathcal P$ consists of finitely many nonempty clopen
subsets $P\subseteq X$ such that 
\[
	X=\bigsqcup\limits_{P\in\mathcal P} P.
\]
     
    Let $\prt\subseteq \mathcal P\ast \mathcal P$\footnote{Here, $\mathcal P\ast \mathcal P$ denotes the \emph{symmetric product} of $\mathcal P$ with itself, i.e.\
$\mathcal P\ast \mathcal P = \{ \{U,V\}\mid U,V \in \mathcal{P}\}$.} be any set of unordered pairs of partition elements and assume that $\mathcal P\subseteq \prt$.
       Consider the relation $\sim_{\prt}$ on $\mathcal P$ which identifies $U,V\in\mathcal P$ if and only if $\{U,V\}\in \prt$.
 Note that the assumptions on $\prt$ imply that $\sim_{\prt}$ is reflexive and symmetric. 
    We say that $\prt$ is \emph{transitive} if the relation $\sim_{\prt}$ is transitive; in this case $\sim_{\prt}$ is an equivalence relation
    and we let  $\mathcal B=\mathcal P/\sim_{\prt}$ denote the set of equivalence classes of $\sim_{\prt}$. 

For $x\in X$ we denote by $\mathcal{P}(x)$ the unique element $P$ of $\mathcal{P}$ such that $x\in P$.
We have that the map
\begin{equation}\label{eq:factormapfrompartition}
    \pi_{\prt}\colon X\to \mathcal{B}^{\Z},\quad (\pi_{\prt}(x))_i = [\mathcal{P}(T^i x)]_{\sim{\prt}}, \ i\in\Z
\end{equation}
is a well-defined factor map from $X$ onto the subshift $\pi_{\prt}(X)\subseteq \mathcal{B}^{\Z}$.
 	
We will now describe a somewhat dual construction to the above.
For any set  $\brt\subseteq(\mathcal P\ast \mathcal P)\backslash\mathcal P$ of unordered pairs of distinct partition elements we define the function $D_{\brt}\colon X\times X\to [0,1]$ by
\begin{equation}\label{eq:def_of_pseudo}
    D_{\brt}(x,y)=\limsup\limits_{n\to\infty}\frac{1}{2n+1}
        \sum\limits_{k=-n}^{n}\sum\limits_{\alpha\in\brt}
        \mathbb{S}_\alpha(T^k(x),T^k(y)),
\end{equation}
where for $\alpha=\{U,V\}\in\brt$ we set
\[
    \mathbb{S}_\alpha(x,y) = \mathbbm{1}_U (x)\cdot\mathbbm{1}_V (y)
    +\mathbbm{1}_V (x)\cdot\mathbbm{1}_U(y).
\]
	
Note that $D_{\brt}$ is always reflexive, symmetric, and invariant under $T$ (meaning that $D_{\brt}\circ T=D_{\brt}$).
However, $D_{\brt}$ does not always have to be a pseudometric on $X$.
To ensure this property, we say that 
$\brt\subseteq(\mathcal P\ast \mathcal P)\backslash\mathcal P$
is \emph{transitive} if for all $\{U,V\}\in\brt$ and all $W\in\mathcal P$
we either have $\{U,W\}\in\brt$ or $\{V,W\}\in\brt$, see Lemma \ref{lem:pseudometricversusfactormap}. 
Once $D_{\brt}$ is a pseudometric, we denote by $[X]_{\brt}$ the corresponding quotient space.
Since $D_{\brt}$ is invariant under $T$, we can consider the shift as a mapping on $[X]_{\brt}$, by setting $T([x]_{\brt})=[T(x)]_{\brt}$. 
This implies, in particular, that the shift becomes an isometry on $[X]_{\brt}$.

\begin{remark}\label{rem:different_version_Besicovitch}
    For a fixed partition $\mathcal{P}$ we denote by $\fp$ the full set  
    $\fp = \mathcal{P} \ast \mathcal{P} \setminus \mathcal{P}$  
    of distinct unordered pairs of elements of $\mathcal{P}$. With this convention,  
    the pseudometric $D_{\fp}$ defined in \eqref{eq:def_of_pseudo} corresponds to  
    the standard \emph{Besicovitch pseudometric}\footnote{Often, this term also  
    refers to the pseudometric defined in \eqref{def:Besicovitch_pseudometric}.  
    In the symbolic context, $D_{\fp}$ and $D_B$ are uniformly equivalent  
    (see, for instance, the proof of Proposition 6.8 in \cite{FuhrmannGroeger2020})  
    but, in general, they do not need to be Lipschitz equivalent.} on a subshift.  
    Clearly, $D_{\fp}$ coincides with $D_\delta$ from  
    \eqref{eq:besicovitch metric and density of discrepancy} for $\delta=1$ and
    for any $\brt \subseteq (\mathcal{P} \ast \mathcal{P}) \backslash \mathcal{P}$,  
    we have  
    \[
        D_{\brt}(x,y) \leq D_{\fp}(x,y),
    \]
    for all $x,y \in X$.
\end{remark}

\begin{lemma}\label{lem:pseudometricversusfactormap}
    Let $(X,T)$ be a subshift and $\mathcal P$ a clopen partition of $X$. 
    Let  $\prt\subseteq \mathcal P \ast \mathcal P$ be a set containing $\mathcal P$.
    The following conditions are equivalent:
    \begin{enumerate}
        \item\label{lem:pseudometricversusfactormap1} $\prt$ is transitive,
        \item\label{lem:pseudometricversusfactormap2} its complement $\brt = \prt'\subseteq (\mathcal P \ast \mathcal P)\setminus \mathcal P$ is transitive.
    \end{enumerate}
    Under the assumption of transitivity, $D_{\prt'}$ is a pseudometric.
    Furthermore, if $X$ is mean equicontinuous, then the quotient system $([X]_{\prt'},T)$ is conjugate to the MEF of $\pi_{\prt}(X)$.
\end{lemma}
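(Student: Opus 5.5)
The proof splits into three parts: the equivalence (i)$\Leftrightarrow$(ii), the pseudometric property, and the identification of the quotient with the MEF of $\pi_{\prt}(X)$.

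\textbf{The equivalence.} This is a purely combinatorial statement about the relation $\sim_{\prt}$ on $\mathcal P$. Since $\prt$ contains $\mathcal P$, $\sim_{\prt}$ is reflexive and symmetric, and we argue by contraposition in both directions.

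For (i)$\Rightarrow$(ii), assume $\sim_{\prt}$ is transitive and take $\{U,V\}\in\brt$ and any $W\in\mathcal P$. If neither $\{U,W\}$ nor $\{V,W\}$ lies in $\brt$, then both pairs lie in $\prt$. Note that this also covers the cases $W=U$ or $W=V$, since singletons are in $\prt$. So $U\sim W\sim V$, and transitivity gives $\{U,V\}\in\prt$, a contradiction.

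For (ii)$\Rightarrow$(i), let $U\sim V$ and $V\sim W$. If $\{U,W\}\notin\prt$, then $\{U,W\}\in\brt$. Applying transitivity of $\brt$ with the third element $V$ gives $\{U,V\}\in\brt$ or $\{W,V\}\in\brt$, which contradicts one of the assumed relations.

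\textbf{Pseudometric property.} Reflexivity and symmetry are clear, so only the triangle inequality needs checking. Fix $k\in\Z$ and put $U=\mathcal P(T^kx)$, $V=\mathcal P(T^ky)$, $W=\mathcal P(T^kz)$. The summand $\sum_{\alpha}\mathbb S_\alpha(T^kx,T^ky)$ equals $1$ exactly when $\{U,V\}\in\brt$, and $0$ otherwise. If it equals $1$, transitivity of $\brt$ forces $\{U,W\}\in\brt$ or $\{V,W\}\in\brt$. Hence the pointwise inequality
\[
\mathbbm 1_{\brt}(x,y;k)\le \mathbbm 1_{\brt}(x,z;k)+\mathbbm 1_{\brt}(z,y;k)
\]
holds for every $k$. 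Averaging over $[-n,n]$ and using subadditivity of $\limsup$ gives $D_{\brt}(x,y)\le D_{\brt}(x,z)+D_{\brt}(z,y)$.

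\textbf{The quotient and the MEF.} The key identity is that, with $\mathcal B=\mathcal P/\sim_{\prt}$,
\[
D_{\brt}(x,y)=D_{\fp}^{\mathcal B}\bigl(\pi_{\prt}(x),\pi_{\prt}(y)\bigr),
\]
where the right-hand side is the Besicovitch pseudometric $D_1$ on $\mathcal B^{\Z}$, i.e.\ the upper density of disagreement positions. This holds because $\pi_{\prt}(x)_k\neq\pi_{\prt}(y)_k$ exactly when $\{\mathcal P(T^kx),\mathcal P(T^ky)\}\in\brt$. Consequently $[X]_{\brt}$ is isometric to the Besicovitch space $[\pi_{\prt}(X)]$, via a map intertwining the shifts.

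It then remains to invoke the known fact that for a minimal mean equicontinuous subshift $Y$, the Besicovitch quotient $([Y],T)$ is the MEF of $Y$. Note that $Y=\pi_{\prt}(X)$ is minimal and mean equicontinuous, being a factor of $X$ (the paper assumes $X$ minimal in this context). One argues as follows:
\begin{itemize}
\item $[\cdot]|_Y$ is continuous by \cite[Prop.~6.8]{FuhrmannGroeger2020}.
\item The shift acts isometrically on $[Y]$, so the quotient is an equicontinuous factor.
\item It is maximal because, for mean equicontinuous minimal systems, the Besicovitch-zero relation coincides with the regionally proximal relation. This is the standard result of Fuhrmann--Gröger--Lenz \cite{FuhrmannGroegerLenz2022}: the fibres of the MEF map are exactly the pairs at mean distance zero.
\end{itemize}

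The main obstacle is this last step. Rather than reproving it, I would cite it, taking care to pass between $D_B$ and $D_{\fp}$ via their uniform equivalence noted in Remark~\ref{rem:different_version_Besicovitch}, which preserves the zero sets.
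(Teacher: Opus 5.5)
Your proof is correct and follows the same route as the paper. The equivalence is done by contraposition on $\sim_{\prt}$, the triangle inequality comes from the pointwise inequality obtained by taking $W=\mathcal P(T^kz)$, and the last claim is reduced to the identity $D_{\brt}(x,y)=D_{\fp}(\pi_{\prt}(x),\pi_{\prt}(y))$, after which the paper, like you, takes for granted that the Besicovitch space of a mean equicontinuous subshift is its MEF.

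One small point: the lemma does not assume $X$ is minimal, and that last fact does not need minimality either. If $\rho$ is any equicontinuous factor map with $\rho(x)\neq\rho(y)$, equicontinuity of the two-sided family keeps $\rho(T^nx)$ and $\rho(T^ny)$ uniformly apart for all $n\in\Z$. By uniform continuity of $\rho$, the same then holds for $T^nx$ and $T^ny$, which forces $D_{\fp}(x,y)>0$. So maximality is immediate, and you do not need the regionally proximal relation or the extra minimality hypothesis.
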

\begin{proof}
    Clearly, for any $U,V$ in $\mathcal P$, $U\sim_{\prt} V$ if and only if $\{U,V\}\notin \prt'$.
    The negation of the statement
    \begin{center}
        $\forall U,V,W$ in $\mathcal P$ if $U\sim_{\prt} W$ and $W\sim_{\prt} V$, then $U\sim_{\prt} V$,
    \end{center}         
    is exactly  the statement
    \begin{center}
        $\exists U,V,W$ in $\mathcal P$ s.t.\ $\{U,V\}\in \prt'$ but both $\{U,W\}$ and $\{V,W\}$ do not lie in $\prt'$.
    \end{center} 
    Hence, \eqref{lem:pseudometricversusfactormap1} and \eqref{lem:pseudometricversusfactormap2} are equivalent.
	
    Assuming that $\prt'$ is transitive, we immediately get for all $x,y,z\in X$ that
\[
    \mathbb{S}_{\{U,V\}}(x,y)
        \leq\max\{\mathbb{S}_{\{U,W\}}(x,z)+\mathbb{S}_{\{V,W\}}(z,y),\,
            \mathbb{S}_{\{V,W\}}(x,z)+\mathbb{S}_{\{U,W\}}(z,y)\},
\]
for all $\{U,V\}\in\prt$, by choosing the unique $W\in\mathcal P$ that
contains $z$.
    This proves that the triangle inequality holds for $D_{\prt'}$ and, hence, $D_{\prt'}$ is a pseudometric.

    Now, assume that $X$ is mean equicontinuous, which implies that $\pi_{\prt}(X)$ is mean equicontinuous as well. Note that on $\pi_{\prt}(X)$ we have a natural partition induced by $\mathcal P$, namely, the partition given by the cylinders of letters $b\in \mathcal B=\mathcal P/\sim_{\prt}$.
    To prove the last assertion it is enough to show that the Besicovitch space $[\pi_{\prt}(X)]$ of $\pi_{\prt}(X)$ is conjugate to $[X]_{\prt'}$.  This follows immediately from the fact that 
    \[D_{\fp}(\pi_{\prt}(x), \pi_{\prt}(y)) = D_{\prt'}(x,y) \quad \text{for}\quad x,y\in X,\]
    which is clear from the definitions.
\end{proof}

\begin{example}\label{non-transitive_collections}
	Note that there can be nontransitive
	$\brt\subseteq(\mathcal P\ast \mathcal P)\backslash\mathcal P$
	such that $D_{\brt}$ is still a pseudometric. Let $\A =\{a,b,c\}$ and $X\subset \A^{\Z}$ be a subshift such that in every $x\in X$ the symbol $c$ appears with density zero. Let $\mathcal{P} = \{ [a]\mid a\in\A\}$ be the partition into letter cylinders. Then for the nontransitive set $\brt = \{ \{[a], [b]\}\}$ the function $D_{\brt}$ is clearly a pseudometric on $X$. 
    Yet, it is also clear that here one can enlarge $\brt$ to a transitive $\widetilde{\brt}\subseteq(\mathcal P\ast \mathcal P)\backslash\mathcal P$ (e.g.\ $\widetilde{\brt} = \fp$) to obtain the pseudometric $D_{\widetilde{\brt}}$ which is exactly the same as $D_{\brt}$.
    This motivates the following question: does there exist a subshift $X$ and a nontransitive $\brt\subseteq(\mathcal P\ast \mathcal P)\backslash\mathcal P$ such that $D_{\brt}$ is a pseudometric on $X$ and for any transitive 
	$\widetilde{\brt}\subseteq(\mathcal P\ast \mathcal P)\backslash\mathcal P$, $D_{\brt}$ and $D_{\widetilde{\brt}}$
	do not induce the same topology on $X$?
\end{example}

\subsection{Discrepancy substitution}\label{sec:discrepancysubstitution}

To capture and quantify how letters in a substitution separate under iteration, we introduce the notion of the \emph{discrepancy substitution}.

\begin{definition}\label{def:discrepancy_substitution_and_numbers}
	Let $\varphi\colon \mathcal{A}\to \mathcal{A}^*$ be a primitive substitution of  length $k\geq 2$. 
	Assume first that $\varphi$ has height 1.
	Let $\A_s = (\A\ast\A) \setminus \A$ be the set of unordered pairs of distinct letters of $\A$.
	We will often write the alphabet $\A_s$ in the following convenient form 
	\[\A_s =\left\{ \left.{a\choose b}\,\right|\, a,b\in\A,\ a\neq b \right\}
	\]
	with the convention that ${a\choose b} = {b\choose a}$.
	The \emph{discrepancy substitution} $\varphi_s$ of $\varphi$ is defined as follows: 
	 Write $\varphi(a)=a_0\dots a_{k-1}$ for each $a\in\A$.
	Then $\varphi_s\colon\A_s\to\A^*_s$ is given by\footnote{Here, the product corresponds to concatenation and if $\mathrm{I}_{a,b}=\emptyset$ then the product is the empty word.}
    
	\[
        \varphi_s{a\choose b} = \prod \left\{ \left.{a_i\choose b_i} \,\right|\, i\in \mathrm{I}_{a,b}\right\},\quad \text{where}\quad \mathrm{I}_{a,b} = \{0\leq i<k\mid a_i\neq b_i\}.
	\]
		If $\varphi$ has height $>1$, then its \emph{discrepancy substitution} is defined to be the discrepancy substitution of its pure base.
\end{definition}

\begin{definition}\label{def:discrepancyrate}
	Let $\varphi$ be a primitive substitution of constant length.
	The \emph{discrepancy rate} (resp.\ the \emph{discrepancy type}) of $\varphi$ is the growth rate (resp.\ the growth type) of its discrepancy substitution $\varphi_s$.
\end{definition}

	In other words, the discrepancy rate of $\varphi$ is the dominant eigenvalue of $\varphi_s$.
	In general, the discrepancy substitution does not  need to be primitive or of constant length; indeed it can very well happen that its letters do not exhibit uniform growth rates.
	Furthermore, if $\varphi$ is not  injective (i.e.\ $\varphi^n(a)=\varphi^n(b)$ for some $a\neq b$, $n\in\N$), then $\varphi_s$ will be erasing. 

    \begin{example}\label{ex:discrepancy_noninteger}
    Let 
     \(\varphi\) be a substitution of constant length \(3\) defined by
    \[
      a\mapsto aac,\quad b\mapsto acc,\quad c\mapsto aab;
    \]
    note that \(\varphi\) is primitive and of height one.
    The discrepancy substitution \(\varphi_s\) 
    is given by
    \[
      {a\choose b}\mapsto {a\choose c},\quad
      {a\choose c}\mapsto {b\choose c},\quad
      {b\choose c}\mapsto {a\choose c}\,{b\choose c}.
    \]
    and its incidence matrix is
    \[
      M_s=\begin{pmatrix}
        0 & 0 & 0\\
        1 & 0 & 1\\
        0 & 1 & 1
      \end{pmatrix}.
    \]
    It is already in normal primitive form:
    the top-left \(1\times1\) diagonal block is \(0\), and the lower-right
    \(2\times2\) diagonal block supported on
    \(\{{a\choose c},{b\choose c}\}\) is
    $\begin{pmatrix}0&1\\1&1\end{pmatrix}$;
    its dominant eigenvalue is the golden ratio
    $ \lambda_s=\frac{1+\sqrt{5}}{2}$.
    While the substitution $\varphi_s$ is not primitive, all letters have the same growth type $(\lambda_s,0)$.
\end{example}

\begin{example}\label{ex:discrepancysubstitution} Let $\varphi$ be a substitution of constant length 4 defined by:
\[a\mapsto baac,\quad b\mapsto bbca,\quad c\mapsto bcba;\] 
note that $\varphi$ is primitive and of height one.
	The discrepancy substitution $\varphi_s$ is given by
	\[{a\choose b}\to {a\choose b}{a\choose c}{a\choose c},\quad {a\choose c}\to {a\choose c}{a\choose b}{a\choose c},\quad {b\choose c}\to{b\choose c}{b\choose c}.
	\]
	The incidence matrix of $\varphi_s$ is given by
	\[M_{s} = \begin{pmatrix}
	1 & 2 & 0\\
	2 &1 & 0\\
	0 & 0& 2
	\end{pmatrix}.
	\]
	We see that $\varphi_s$ is in normal primitive form and it has two primitive components: one with the dominant eigenvalue 2 and the other one with dominant eigenvalue $3$. 
	Thus the discrepancy type of $\varphi$ is $(3,0)$; however not all pairs of distinct letters in $\A_s$ separate with the  rate  $\lambda_s = 3$.
\end{example}

\begin{example}\label{ex:sep_substitution_not_irreducible}
    Consider the substitution $\varphi$ given by
\[a\mapsto aaac,\quad b\mapsto abbb,\quad c\mapsto accb;\] 
note that $\varphi$ is primitive and of height one.
	The discrepancy substitution $\varphi_s$ is given by
	\[{a\choose b}\to {a\choose b}{a\choose b}{b\choose c},\quad {a\choose c}\to{a\choose c}{a\choose c}{b\choose c},\quad {b\choose c}\to {b\choose c}{b\choose c}.
	\]
	The incidence matrix of $\varphi_s$ is given by
	\[M_{s} = \begin{pmatrix}
	2& 0 & 0\\
	0 &2 & 0\\
	1 & 1& 2
	\end{pmatrix}.
	\]
    It is in normal primitive form and has 3 primitive components, all with the dominant eigenvalue 2. We see that the letters ${a\choose b}$, ${b\choose c }$, and ${a\choose c}$ have the same growth rate 2, but not the same growth type: ${a\choose b}$ and ${b\choose c }$ have growth type (2,1), and ${a\choose c}$ has growth type (2,0). The discrepancy type of $\varphi$ is (2,1).
\end{example}

The discrepancy substitution $\varphi_s$ works well with taking powers of $\varphi$, and, in the case $\varphi$ has height 1, keeps track of the number of indices on which the words $\varphi^n(a)$ and $\varphi^n(b)$ differ.

\begin{lemma}\label{lem:substitution_counts_differences}
Let $\varphi$ be a primitive substitution of length $k\geq 2$, $\varphi'$ its pure base, and $\varphi_s$ its discrepancy substitution.
 Let $n\in\N$.\begin{enumerate}
\item\label{lem:substitution_counts_differences1} The discrepancy substitution of $\varphi^n$ is equal to $\varphi_s^n$.
\item\label{lem:substitution_counts_differences2} For all $\{a,b\}\in\mathcal{A}_s$ we have
\[\abs{\varphi_s^n{a \choose b}} = \abs{\{0\leq i<k^n\mid a_i\neq b_i\}},\]
	where $(\varphi')^n(a) = a_0\dots a_{k^{n}-1}$ and 
$(\varphi')^n(b) = b_0\dots b_{k^{n}-1}$.
\end{enumerate}
\end{lemma}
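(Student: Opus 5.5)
Both assertions are proved by induction on $n$, and since the discrepancy substitution is by definition the one of the pure base, we may replace $\varphi$ by $\varphi'$ throughout. This requires first checking that the pure base of $\varphi^n$ is $(\varphi')^n$. The height of $\varphi^n$ equals that of $\varphi$: by Remark \ref{rem:height} it is determined by the system $X_{\varphi}=X_{\varphi^n}$, together with the coprimality to $k^n$, which is the same as coprimality to $k$. The alphabet $\A'$ of $h$-blocks at positions divisible by $h$ in a common fixed point $x$ is also the same, and since $\varphi^n$ acts on these blocks by iterating $\varphi'$, the pure base of $\varphi^n$ is $(\varphi')^n$. So we may assume $\varphi$ has height $1$.

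For \eqref{lem:substitution_counts_differences2} the key observation is positional. Write $\varphi^{n}(a)=a_0\dots a_{k^n-1}$. For $0\le j<k^n$ and $0\le r<k$ the letter at position $jk+r$ of $\varphi^{n+1}(a)=\varphi(\varphi^n(a))$ is $\varphi_r(a_j)$. Hence the positions where $\varphi^{n+1}(a)$ and $\varphi^{n+1}(b)$ differ are exactly the $jk+r$ with $a_j\neq b_j$ and $\varphi_r(a_j)\neq\varphi_r(b_j)$. Their number is
\[
\sum_{j:\,a_j\neq b_j}\abs{\mathrm{I}_{a_j,b_j}}=\sum_{j:\,a_j\neq b_j}\abs{\varphi_s{a_j\choose b_j}}.
\]
We strengthen the induction hypothesis to the word identity
\[
\varphi_s^n{a\choose b}=\prod_{j:\,a_j\neq b_j}{a_j\choose b_j},
\]
with the product taken in increasing order of $j$. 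Applying $\varphi_s$ to both sides and unwinding the definition of $\varphi_s$ gives the product of ${\varphi_r(a_j)\choose\varphi_r(b_j)}$ over pairs $(j,r)$ with $a_j\ne b_j$ and $\varphi_r(a_j)\ne\varphi_r(b_j)$, in lexicographic order. That is exactly the ordered list of differing positions of $\varphi^{n+1}(a)$ and $\varphi^{n+1}(b)$, which closes the induction. Taking lengths gives \eqref{lem:substitution_counts_differences2}. The base case $n=0$ is trivial, and $n=1$ is the definition.

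For \eqref{lem:substitution_counts_differences1}, the definition of the discrepancy substitution of $\varphi^n$, which we reduced to height one above, reads
\[
(\varphi^n)_s{a\choose b}=\prod_{i:\,a_i\ne b_i}{a_i\choose b_i},
\]
where $a_i,b_i$ are the letters of $\varphi^n(a),\varphi^n(b)$. By the word identity just proved, this equals $\varphi_s^n{a\choose b}$. Erasing is handled consistently: empty products on both sides correspond to $\varphi^n(a)=\varphi^n(b)$.

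The main obstacle is the reduction in the first paragraph, namely that height and pure base behave well under powers, since the identity itself is routine bookkeeping. If the argument via Remark \ref{rem:height} is shaky, we would instead argue directly from Definition \ref{def:height}: a fixed point of $\varphi$ is also one of $\varphi^n$, and $\gcd(n',k)=1$ if and only if $\gcd(n',k^n)=1$, so the height is the same.
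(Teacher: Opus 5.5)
Your proposal is correct and follows essentially the paper's argument. The paper also reduces to height one, using that the pure base of $\varphi^n$ is $(\varphi')^n$, and then relies on exactly your positional decomposition $i=jk+r$. It packages this as the composition rule $(\varphi\circ\tau)_s=\varphi_s\circ\tau_s$, from which (i) follows by induction and (ii) by taking lengths. Your inductive word identity is that rule specialised to powers, and your check that height and pure base are stable under powers fills in a step the paper only calls straightforward.
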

\begin{proof}
	Claim \eqref{lem:substitution_counts_differences2} follows directly from \eqref{lem:substitution_counts_differences1} and the definition of the discrepancy substitution.
	
	It is straightforward to see that the pure base of the substitution $\varphi^n$ is equal to $(\varphi')^n$.
	Hence, to show \eqref{lem:substitution_counts_differences1}, we may assume without loss of generality that $\varphi$ has height 1, i.e.\ $\varphi=\varphi'$.
	Note that it is enough to show that for two substitutions $\varphi,\ \tau\colon \A\to\A^*$ of  length $k$, we have $(\varphi\circ\tau)_s = \varphi_s\circ\tau_s$.
	This follows from the fact that for any distinct $a,b\in\A$ and $0\leq i<k^2$, $i=i'+jk$ with $0\leq i',j<k$, 
	$((\varphi\circ\tau)(a))_i\neq ((\varphi\circ\tau)(b))_i$ if and only if $\tau(a)_j\neq \tau(b)_j$ and $\varphi(\tau(a)_j)_{i'}\neq \varphi(\tau(b)_j)_{i'}$.
\end{proof}

	 The following proposition gathers the basic properties of the numbers $\lambda_s$: the extreme values of discrepancy rates correspond to the concrete dynamical properties of the underlying substitution systems.
	 
	  \begin{proposition}\label{prop:sepnumberproperties}
	  Let $\varphi$ be a primitive substitution of  length $k\geq 2$ and let $\lambda_s$ be its discrepancy rate.
      The following are true:
\begin{enumerate}
\item\label{prop:sepnumberproperties1} $\lambda_s = 0$ or $1\leq \lambda_s \leq k$,
\item\label{prop:sepnumberproperties2} $\lambda_s = 0$ if and only if $X_{\varphi}$ is finite,
\item\label{prop:sepnumberproperties3} $\lambda_s =k$ if and only if $X_{\varphi}$ does not have discrete spectrum if and only if it is not mean equicontinuous.
\end{enumerate}
\end{proposition}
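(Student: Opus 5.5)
We first reduce to height one. By definition $\varphi_s$ is the discrepancy substitution of the pure base $\varphi'$. By Theorem~\ref{thm:dekking}\eqref{thm:dekking1}, $X_\varphi$ is infinite iff $X_{\varphi'}$ is. Discrete spectrum is governed by $\varphi'$ through Theorem~\ref{thm:dekking}\eqref{thm:dekking3}, and mean equicontinuity passes to finite suspensions just as in the proof of Theorem~\ref{thm:dekking}. So we may assume $h=1$ throughout. The main tool is Lemma~\ref{lem:substitution_counts_differences}\eqref{lem:substitution_counts_differences2}: $\abs{\varphi_s^n{a\choose b}}$ equals the number of positions where $\varphi^n(a)$ and $\varphi^n(b)$ differ. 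This number is at most $k^n$, so $\|M_s^n\|_1\leq k^n$. Combined with Lemma~\ref{lem:growth_of_letters}\eqref{lem:growth_of_letters1} this gives $\lambda_s\leq k$. Since $\lambda_s$ is the spectral radius of a nonnegative integer matrix, it is either $0$ (nilpotent matrix) or at least $1$. This proves \eqref{prop:sepnumberproperties1}.

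For \eqref{prop:sepnumberproperties2}, note that $\lambda_s=0$ means $M_s$ is nilpotent. Equivalently, $\varphi^n(a)=\varphi^n(b)$ for all $a,b$ and some $n$. In that case $\varphi^n$ is constant on letters, and a fixed point of $\varphi^{n}$ (or of a power) is periodic with period $k^n$. Hence $X_\varphi$ is finite. Conversely, suppose $X_\varphi$ is finite. Then it is a single periodic orbit, and its MEF $\Z/p\Z$ must have $p$ coprime to $k$ up to powers of $k$. Height one and the structure of periodic $k$-automatic sequences then force $p$ to divide $k^m$ for some $m$, since a period coprime to $k$ would contribute to the height. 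I would then argue directly that $x$ being $k^m$-periodic and a fixed point of $\varphi$ gives $\varphi^{n}(a)=\varphi^{n}(b)$ for large $n$. The reason is that $\varphi^n(a)$ is the block of $x$ at some position $jk^n$, and all such blocks coincide once $k^n$ is a multiple of the period. Every letter occurs as some $x_j$ by primitivity. Hence $\varphi_s$ is erasing on every letter and $\lambda_s=0$. I expect this direction to need some care with the height bookkeeping.

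For \eqref{prop:sepnumberproperties3}, the equivalence between discrete spectrum and mean equicontinuity is known for automatic systems. One route is through Theorem~\ref{thm:dekking}\eqref{thm:dekking3}: coincidence gives a continuous eigenfunction structure, and hence mean equicontinuity via the Besicovitch distance to the MEF. The converse holds because minimal mean equicontinuous systems have discrete spectrum. The core claim is that $\lambda_s<k$ iff $\varphi$ has a coincidence.
\begin{itemize}
\item If $\varphi$ has a coincidence, some column of $\varphi^m$ is a singleton. Then every pair ${a\choose b}$ has $\abs{\varphi_s^m{a\choose b}}\leq k^m-1$, so $\|M_s^m\|_1\leq k^m-1$. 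The spectral radius is then at most $(k^m-1)^{1/m}<k$.
\item If $\varphi$ has no coincidence, pick a column set $C\in \C(\varphi)$ of minimal cardinality $\geq2$, say $C=\varphi^m_j(\A)$. Minimality implies that $\varphi^n_i$ is injective on $C$ for all $n,i$, because its images are again column sets. Take distinct $a,b\in C$, say $a=\varphi^m_j(c)$ and $b=\varphi^m_j(d)$. Then $\varphi^n(a)$ and $\varphi^n(b)$ differ at all $k^n$ positions, so $\abs{\varphi_s^n{a\choose b}}=k^n$ and $\lambda_s=k$.
\end{itemize}
The main obstacle is making the coincidence/discrete-spectrum/mean-equicontinuity equivalences rigorous from cited results. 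I would invoke Dekking together with \cite{FuhrmannGroeger2020}, where substitution systems with discrete spectrum are shown to be mean equicontinuous, and Theorem~\ref{thm:equivalence_mean_equi_fin_sep_num}.
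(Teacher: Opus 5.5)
Your proposal is correct and follows the paper's proof in all essentials. It uses the same reduction to height one, the bound $\|M_s^n\|_1\leq k^n$ from Lemma~\ref{lem:substitution_counts_differences} together with the nilpotency dichotomy for \eqref{prop:sepnumberproperties1}, the height argument forcing the period to divide a power of $k$ for \eqref{prop:sepnumberproperties2}, the coincidence-column bound $\|M_s^m\|_1\leq k^m-1$, and a citation of \cite{FuhrmannGroeger2020} for the mean-equicontinuity equivalence.

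The one deviation is how you show that $\lambda_s<k$ forces a coincidence. You argue contrapositively: take a column set of minimal size, note that every $\varphi^n_i$ is injective on it, and conclude that some pair $\alpha$ has $\abs{\varphi_s^n(\alpha)}=k^n$ for all $n$. The paper instead finds a common level $n$ at which every pair agrees somewhere, then shrinks $\A$ to a singleton step by step. Both arguments work.
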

\begin{proof}
	The first claim follows from the fact that $\lambda_s$ is the dominant eigenvalue of a nonnegative integer matrix. 
	
	For the second claim note first that we may assume that $\varphi$ has height $h=1$ by passing to the pure base of $\varphi$. In this case $\lambda_s=0$ if and only if the matrix $M_{\varphi}$  is nilpotent, i.e.\ $M^n_{\varphi}=0$ for some $n\in\N$. 
	This is equivalent to the fact that $\varphi_s^n(\alpha)=\varepsilon$  for all $\alpha\in\A_s$. 
	This, in turn, is  equivalent to the fact that the words $\varphi^n(a)$, $a\in\A$ are all equal.
	This clearly implies that $X_{\varphi}$ is finite.
    
    For the other direction, assume that $X_{\varphi}$ is finite; we may assume without loss of generality that $\varphi$ has some fixed point $x$. 
    Since $X_{\varphi}$ is minimal we have $X_{\varphi}=\Z/m\Z $ for some $m\geq 1$ and 
    \[ x = \dots uuu.uuu\dots\]
    for some $u$ of length $m$.
    By Remark \ref{rem:height}, the height $h$ is the biggest $n$ coprime with $k$ such that $\Z/n\Z$ is a factor of $X_{\varphi}$. 
    Since $h=1$ by assumption, all prime divisors of $m$ must divide $k$. 
    Hence, $m$ divides $k^n$ for some $n$ big enough.
    But this means that $\varphi^n(a) = u^{q}$ for all $a\in \A$, where $q= k^n/m$; in particular, all  $\varphi^n(a)$, $a\in\A$ are the same.

	For the second equivalence in \eqref{prop:sepnumberproperties3} see \cite[Cor. 5.2]{FuhrmannGroeger2020}.
	For the first equivalence, we may assume without loss of generality that $\varphi$ has height 1.
	By Theorem \ref{thm:dekking}, $\varphi$ has discrete spectrum if and only if it has a coincidence.
	 If $\varphi$ has a coincidence, then  for some $n\geq 1$, $\abs{\varphi^n_s(\alpha)}\leq k^n-1$ for all pairs of distinct letters $\alpha=\{a,b\}$, and so, $\lambda^n_s\leq k^n-1<k^n$, and thus $\lambda_s<k$.
	For the other direction, assume that $\lambda_s<k$. 
	This  implies that for each pair of distinct letters $\alpha$, $\abs{\varphi_s^{n_{\alpha}}(\alpha)}\leq k^{n_{\alpha}}-1$ for some $n_{\alpha}\geq 1$.
	Putting $n = \max n_{\alpha}$, we have that  $\abs{\varphi_s^{n}(\alpha)}\leq k^{n}-1$ for all $\alpha$.

	In particular, for any $\mathcal{B}\subset \A$ of size at least 2, there exists $0\leq i<k$ such that $\abs{\varphi^n_i(\mathcal{B})}<\abs{\mathcal{B}}$.
	Using this inductively, starting with $\mathcal{B}=\A$ we get that $\varphi^n$ (and thus $\varphi$) has a coincidence.
	\end{proof}

	We will see in the next sections that, for the values between 1 and $k$, the discrepancy rates allow us to compute the amorphic complexity of $X_{\varphi}$  and that the  smallest possible value $\lambda_s=1$ of the discrepancy rate of an infinite system $X_{\varphi}$ corresponds to tameness and, equivalently in this case, nullness of $X_{\varphi}$.

\subsection{Pseudometrics related to pairs of letters of maximal growth}\label{pseudometricforsubstitutions}

For our purposes the most important pseudometric on $X_{\varphi}$ will be given by $D_{\mg}$ with $\mg$ being the set of all distinct pairs of (cylinders of) letters that have maximal growth with respect to the discrepancy substitution $\varphi_s$.

\begin{definition}\label{def:discrepancypseudometric}
	Let $\varphi$ be a substitution of  length $k\geq 2$ and height 1. 
    We will identify each letter $a\in\A$ with its natural cylinder set $[a] = \{x\in X_{\varphi}\mid x_0=a\}$ and the alphabet set $\A$ with the partition $P=\{[a]\mid a\in \A\}$ by the letter cylinders.
	We will denote by $\mg\subset \A_s = (\A\ast\A) \setminus \A$ the set of  pairs of letters of maximal growth with respect to $\varphi_s$ and by $D_{\mg}$ the corresponding pseudometric on $X_{\varphi}$.
\end{definition}

Lemma \ref{lem:when_d_D_is_pseudometric} below shows that $D_{\mg}$ is indeed a pseudometric on $X_{\varphi}$.

\begin{continueexample}{ex:discrepancysubstitution}
    Let $\varphi$ be a substitution 
\[a\mapsto baac,\quad b\mapsto bbca,\quad c\mapsto bcba;\] 
with its discrepancy substitution $\varphi_s$:
	\[{a\choose b}\to {a\choose b}{a\choose c}{a\choose c},\quad {a\choose c}\to {a\choose c}{a\choose b}{a\choose c},\quad {b\choose c}\to{b\choose c}{b\choose c}.
	\]
    Here the set of letters of maximal growth w.r.t.\ $\varphi_s$ is given by $\mg = \{{a\choose b}, \,{a\choose c}\}$ (remembering that  ${a\choose b}$ really denotes the set $\{a,b\}$) and the corresponding pseudometric is given by
    \[
    D_{\mg}(x,y) = \limsup\limits_{n\to\infty}\frac{1}{2n+1}
        \abs{\left\{ -n\leq i \leq n \,\Big| \, {x_i \choose y_i} \in \mg \right\}}.
    \]
\end{continueexample}

\begin{lemma}\label{lem:when_d_D_is_pseudometric} 
Let $\varphi$ be a primitive substitution of length $k\geq 2$ and height 1. Then $D_{\mg}$ is a pseudometric on $X_{\varphi}$.
\end{lemma}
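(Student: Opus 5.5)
The plan is to verify that $\mg\subseteq\A_s=(\A\ast\A)\setminus\A$ is \emph{transitive} in the sense of Section~\ref{sec:pseudometrics}. Once that holds, Lemma~\ref{lem:pseudometricversusfactormap} immediately gives that $D_{\mg}$ is a pseudometric on $X_\varphi$. Concretely, I have to show the following: for every ${a\choose b}\in\mg$ and every $c\in\A$, either ${a\choose c}\in\mg$ or ${b\choose c}\in\mg$.

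First I dispose of the trivial cases.
\begin{itemize}
\item If $c=a$, then ${b\choose c}={a\choose b}\in\mg$. The case $c=b$ is symmetric.
\item If $\lambda_s=0$, then every letter of $\varphi_s$ has growth rate $0$, which is maximal. So $\mg=\A_s$ and transitivity is obvious.
\end{itemize}
Hence I may assume that $a,b,c$ are pairwise distinct and $\lambda_s\geq 1$.

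The key step is a triangle inequality on discrepancy counts. Since $\varphi$ has height $1$, its pure base is $\varphi$ itself, so Lemma~\ref{lem:substitution_counts_differences} applies directly. It gives that $\abs{\varphi_s^n{a\choose b}}$ equals the number of positions $0\leq i<k^n$ at which $\varphi^n(a)$ and $\varphi^n(b)$ differ. At any such position $i$, the letter $\varphi^n(c)_i$ cannot equal both $\varphi^n(a)_i$ and $\varphi^n(b)_i$. Therefore, for all $n\in\N$,
\[
\abs{\varphi_s^n{a\choose b}}\leq \abs{\varphi_s^n{a\choose c}}+\abs{\varphi_s^n{b\choose c}}.
\]

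Now suppose, for contradiction, that neither ${a\choose c}$ nor ${b\choose c}$ lies in $\mg$. Choose $m$ such that $\varphi_s^m$ is in normal primitive form. Lemma~\ref{lem:growth_of_letters} then gives the following estimates along the subsequence $mn$:
\begin{itemize}
\item $\abs{\varphi_s^{mn}{a\choose b}}\sim c\,(mn)^{d}\lambda_s^{mn}$ for some $c>0$ and $d\in\N$;
\item both terms on the right-hand side are $\bigO\bigl((mn)^{d'}\lambda'^{mn}\bigr)$ for some $\lambda'<\lambda_s$ and $d'\in\N$.
\end{itemize}
For an erasing pair the right-hand term is eventually $0$, which is consistent with this bound. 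Because $\lambda'<\lambda_s$, the exponential gap beats the polynomial factors. So the right-hand side of the inequality is $o$ of its left-hand side, which contradicts the inequality for large $n$.

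The main obstacle is bookkeeping rather than substance. One must make sure that "maximal growth" refers to the growth rate only, not the full growth type; otherwise polynomial factors could break the argument, as Example~\ref{ex:sep_substitution_not_irreducible} illustrates. One must also check that the asymptotics of Lemma~\ref{lem:growth_of_letters}, which hold along multiples of $m$, suffice for the comparison; they do, since a single subsequence already yields the contradiction.
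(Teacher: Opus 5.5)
Your proof is correct. It rests on the same combinatorial fact as the paper's proof, but organizes it more economically.

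The paper argues by contradiction through a block decomposition $\A_s=M\sqcup M'\sqcup L$ of $M_{\varphi_s}$. It introduces the auxiliary set $N\subset L$ of pairs $\{\varphi^n(b)_i,\varphi^n(c)_i\}$ sitting under maximal-growth pairs $\{\varphi^n(a)_i,\varphi^n(b)_i\}\in M$. It then asserts the domination \eqref{eq:more_of_slow_letters} and compares column norms of diagonal blocks to get $\rho(P_1)\leq\rho(P_3)$.

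You instead extract the underlying observation directly: wherever $\varphi^n(a)$ and $\varphi^n(b)$ disagree, $\varphi^n(c)$ disagrees with at least one of them. Via Lemma~\ref{lem:substitution_counts_differences} this becomes the subadditivity $\abs{\varphi_s^n{a\choose b}}\leq\abs{\varphi_s^n{a\choose c}}+\abs{\varphi_s^n{b\choose c}}$. You then compare the growth of just three letters using Lemma~\ref{lem:growth_of_letters}. This avoids the block-matrix bookkeeping and the set $N$. It also makes explicit that the only inputs are height $1$ (so that $\abs{\varphi_s^n\alpha}$ literally counts disagreeing positions) and the strict gap between growth rates. Your separate treatment of $c\in\{a,b\}$ and of $\lambda_s=0$ (where $\mg=\A_s$ by the erasing convention) is also correct.

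One remark in your last paragraph is off: polynomial factors cannot break the argument. Suppose $\mg$ were defined via maximal growth \emph{type} $(\lambda_s,d_s)$ instead. Your inequality still gives a contradiction, because two letters of lexicographically smaller type contribute only $o\bigl((mn)^{d_s}\lambda_s^{mn}\bigr)$. Also, Example~\ref{ex:sep_substitution_not_irreducible} does not exhibit any failure of transitivity; any two of the three pairs on three letters form a transitive set. The caveat is harmless but unnecessary and should be dropped.
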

\begin{proof}
By Lemma \ref{lem:pseudometricversusfactormap}, it is enough to show that $\mg$ is transitive.

	Suppose that the claim does not hold and that there exist $\{a,b\}\in \mg$ and $c\in\mathcal{A}$ such that both $\{a,c\}$ and $\{b,c\}$ do not lie in $\mg$. Clearly, $c\neq b$ and $c\neq a$. Write 
\[\mathcal{A}_s=M\sqcup M' \sqcup L,\]
where
\begin{enumerate}
\item $M$ is the set of letters of maximal growth $\lambda$ which appear in $\varphi_s^n{a \choose  b}$ for some $n\geq 0$,
\item $M'$ is the (possibly empty) set of letters of maximal growth which do not appear in $M$,
\item $L$ is the  set of letters with growth strictly less than $\lambda$.
\end{enumerate}
After permuting the matrix if necessary we can write $M_{\varphi_s}$ in the upper block-diagonal form:
\[
M_{\varphi_{s}}=\begin{bmatrix}
       P_1 & 0&  0\\
        *& P_2 & 0  \\
       * & *&  P_3\\
    \end{bmatrix},
\]
 where each matrix $P_i$ corresponds to the sets $M$, $M'$ and $L$, respectively. Note that the maximal growth rate of $\varphi_s$ is equal to the dominant eigenvalue $\rho(P_1)$ of $P_1$ and all letters in $L$ have growth rate $\leq$ than $\rho(P_3)<\rho(P_1)$. 

Let $N\subset \mathcal{A}_s$ be the following set of letters:
\[N=\left\{{\varphi^n(b)_i\choose \varphi^n(c)_i}\mid i\geq 1,\ {\varphi^n(a)_i \choose \varphi^n(b)_i}\in M\right\}.\] Since $\{b,c\}$ does not have maximal growth, $N\subset L$. Since neither  $\{a,c\}$ nor $\{b,c\}$ lie in $\mg$, for each $\{d,e\}\in M$, there exists $\{d',e'\}\in N$ (with either $d'=d$ or $e'=e$) such that
\begin{equation}\label{eq:more_of_slow_letters} 
\abs{\varphi_s^n {d'\choose e'}}_{N} \geq \abs{\varphi_s^n{d\choose e}}_{M}
\end{equation} for all $n\geq 0$ (here, $\abs{w}_B$ denotes the number of occurrences of letters from $B$ in the word $w$). We let $||\cdot||_1$ denote the first matrix norm which is  the maximum absolute column sum of the matrix. We now see that 
\[\rho(P_1) = \lim_{n\to\infty} \left(||P_1^n||_1\right)^{1/n}\leq \lim_{n\to\infty} \left(||P_3^n||_1\right)^{1/n}=\rho(P_3),
\]
using \eqref{eq:more_of_slow_letters}, the fact that $N\subset L$ and the fact that $M_{\varphi_s}$ is nonnegative. This contradicts the fact that $\{b,c\}$ is not of maximal growth.
\end{proof}

	Let $\varphi$ be a primitive substitution of  length $k$.
	 It is easy to see that for all $x,y\in X_{\varphi}$, $D_{\fp}(x,y)=0$ if and only if $D_{\fp}(\varphi(x),\varphi(y))=0$, and, thus, the substitution $\varphi$ preserves the equivalence classes of $[X_{\varphi}]$ and induces a well defined map $[\varphi]$ on $[X_{\varphi}]$, see \cite[p.\ 11]{FuhrmannGroeger2020}.
	The same is true for the pseudometric $D_{\mg}$.

\begin{lemma}\label{lem:proportion_of_ourmetric_to_Besicovitch}
Let $\varphi\colon \A \to \A^*$ be a primitive substitution of  length $k\geq 2$ and height 1.  Then
\begin{enumerate}
\item\label{lem:proportion_of_ourmetric_to_Besicovitch1} $D_{\mg}(x,y)=0$ if and only if $D_{\mg}(\varphi(x),\varphi(y))=0$ for any $x,y\in X_{\varphi}$,
\item\label{lem:proportion_of_ourmetric_to_Besicovitch2} for any $x,y\in X_{\varphi}$ with $D_{\fp}(x,y)>0$ and $n$ big enough we have
\begin{equation}\label{eq:proportion_of_ourmetric_to_Besicovitch}
\frac{D_{\mg}(\varphi^n(x),\varphi^n(y))}{D_{\fp}(\varphi^n(x),\varphi^n(y))}\geq
\frac{D_{\mg}(x,y)}{D_{\fp}(x,y)}.
\end{equation} 
\end{enumerate}
\end{lemma}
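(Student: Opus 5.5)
The plan is to reduce both claims to a single exact scaling identity for how $\varphi$ acts on discrepancy densities. This identity comes from Lemma~\ref{lem:substitution_counts_differences}.

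\textbf{The scaling identity.} Fix $x,y\in X_\varphi$ and put $x'=\varphi^n(x)$, $y'=\varphi^n(y)$. Each position $j$ of $x,y$ is replaced by the blocks $\varphi^n(x_j)$ and $\varphi^n(y_j)$ of length $k^n$, placed at the same positions in $x'$ and $y'$. Hence the multiset of discrepant pairs $\{x'_i,y'_i\}$ inside the block at $j$ is exactly the letter content of $\varphi_s^n\{x_j,y_j\}$ when $x_j\neq y_j$, and is empty otherwise. Averaging over windows $[-k^nN,k^nN]$, which are unions of whole blocks up to a bounded error, gives for any set $B\subseteq\A_s$
\[
D_B(x',y')=\limsup_{N\to\infty}\frac{1}{k^n(2N+1)}\sum_{j=-N}^{N}\abs{\varphi_s^n{x_j\choose y_j}}_B .
\]
The limsup over this subsequence of windows agrees with the full limsup up to $O(k^n/N)$, so the identity is exact.

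\textbf{Claim (i).} If $D_\mg(x,y)=0$, then the positions where $\{x_j,y_j\}\in\mg$ have density zero. At the remaining discrepant positions, $\{x_j,y_j\}\notin\mg$. By the proof of Lemma~\ref{lem:when_d_D_is_pseudometric}, letters not in $\mg$ only produce letters of $L$ under $\varphi_s$: $\mg$ is upward closed in the ordering of primitive components, i.e.\ anything descending from a non-maximal letter has non-maximal growth. So $\abs{\varphi_s^n\alpha}_\mg=0$ for $\alpha\notin\mg$. This is the point to verify: a letter of lower growth cannot produce a letter of maximal growth, since the growth rate of a letter is at least that of any letter it produces. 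Plugging into the identity gives $D_\mg(\varphi^n x,\varphi^n y)\leq D_\mg(x,y)\cdot\max_\alpha\abs{\varphi_s^n\alpha}/k^n\cdot C=0$. For the converse, if $D_\mg(x,y)>0$, then a positive density of positions carry $\alpha\in\mg$. Letters of maximal growth are nonerasing ($\lambda_s\geq1$ for infinite $X_\varphi$; the finite case is trivial), and at least one letter of $\varphi_s^n\alpha$ lies in $\mg$. Indeed, the growth of $\alpha$ is the maximum of the growths of its descendants, so some descendant has maximal growth. Hence $D_\mg(\varphi^nx,\varphi^ny)\geq k^{-n}D_\mg(x,y)>0$.

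\textbf{Claim (ii).} Write $a_n(\alpha)=\abs{\varphi_s^n\alpha}_\mg$ and $b_n(\alpha)=\abs{\varphi_s^n\alpha}$. For $\alpha\notin\mg$ we have $a_n=0$. For $\alpha\in\mg$, $a_n(\alpha)/b_n(\alpha)$ is at least some uniform ratio. The intended mechanism is that the $\mg$-part of $\varphi_s^n\alpha$ grows like $\lambda_s^n n^{d}$ or at least dominates the $L$-part, which grows strictly slower. So $a_n(\alpha)/b_n(\alpha)\to1$ for $\alpha\in\mg$, uniformly, since there are finitely many letters. Then by the identity, splitting the sum according to whether $\{x_j,y_j\}\in\mg$,
\[
\frac{D_\mg(x',y')}{D_\fp(x',y')}\;\geq\;\frac{\min_{\alpha\in\mg}\frac{a_n(\alpha)}{b_n(\alpha)}\,\sum_{\mg}b_n}{\sum_{\mg}b_n+\sum_{\notin\mg}b_n}.
\]
Positions of non-maximal pairs contribute $b_n(\beta)=o(b_n(\alpha))$ for $\alpha\in\mg$. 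So if $D_\mg(x,y)>0$ the right side tends to $1\geq D_\mg(x,y)/D_\fp(x,y)$. If $D_\mg(x,y)=0$, the inequality is trivial.

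\textbf{Main obstacle.} The delicate step is exchanging limsups with ratios. Numerator and denominator are limsups along possibly different subsequences, so I would bound the ratio windowwise and take the limsup of the numerator along the subsequence realizing the denominator's limsup. A second difficulty is ensuring the "$a_n/b_n\to1$" and "$o(\cdot)$" estimates are uniform when the maximal growth type has polynomial factors $n^d$. Within a given growth rate, the $n^d$ factors may differ; as in Example~\ref{ex:sep_substitution_not_irreducible}, the $\mg$-part may not dominate the count. In that case I would settle for the weaker bound that the ratio is nondecreasing, comparing $\lambda_s^{-n}$-normalized counts via Lemma~\ref{lem:growth_of_letters}, which suffices for \eqref{eq:proportion_of_ourmetric_to_Besicovitch}.
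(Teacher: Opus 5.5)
\textbf{Part (i).} Your argument is correct and is the paper's: pairs outside $\mg$ only produce pairs outside $\mg$, and every pair in $\mg$ produces at least one pair in $\mg$. One caveat: this needs $X_\varphi$ infinite, and the finite case is not ``trivial'' but false. For $a\mapsto ab$, $b\mapsto ab$ one has $\mg=\A_s$ and $D_{\mg}(x,Tx)=1$ for $x=(ab)^{\omega}$, yet $\varphi(x)=\varphi(Tx)$.

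\textbf{Part (ii): the gap.} The step that fails is $a_n(\alpha)/b_n(\alpha)\to 1$ for $\alpha\in\mg$. Each pair outside $\mg$ does grow slowly. But pairs in $\mg$ create new ones at every step, so the non-$\mg$ part of $\varphi_s^n(\alpha)$ is bounded by roughly $\sum_{m<n}a_m(\alpha)\gamma^{n-m}$. That is comparable to $a_n(\alpha)$, not $o(a_n(\alpha))$.

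A concrete example: take $\varphi(a)=aaba$, $\varphi(b)=bcca$, $\varphi(c)=ccca$, which is primitive, of length $4$ and height $1$, and has a coincidence. Then
\[
\varphi_s{a\choose b}={a\choose b}{a\choose c}{b\choose c},\qquad \varphi_s{a\choose c}={a\choose c}{a\choose c}{b\choose c},\qquad \varphi_s{b\choose c}={b\choose c}.
\]
So $\mg=\{{a\choose b},{a\choose c}\}$, and for $\alpha\in\mg$ one gets $a_n(\alpha)=2^n$ and $b_n(\alpha)=2^{n+1}-1$. The ratio tends to $1/2$, not $1$.

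Your fallback does not rescue this. The polynomial factors you worry about are beside the point: in Example~\ref{ex:sep_substitution_not_irreducible} all pairs have rate $2$, so $\mg=\A_s$ and the claim is trivial there. Also, ``the ratio is nondecreasing'' is not a weaker bound; it is exactly \eqref{eq:proportion_of_ourmetric_to_Besicovitch}.

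\textbf{(ii) is false as stated.} Let $x$ be the fixed point of the substitution above with seed $a.a$, and let $y=Tx$.
\begin{itemize}
\item The letter frequencies are $(\frac12,\frac16,\frac13)$.
\item The densities of the discrepancies ${a\choose b},{a\choose c},{b\choose c}$ between $x$ and $y$ are $\frac{7}{24},\frac{5}{24},\frac{1}{24}$, so $D_{\mg}(x,y)/D_{\fp}(x,y)=\frac{12}{13}$.
\item For $(\varphi^n x,\varphi^n y)=(x,T^{4^n}x)$ the ratio is $\frac{12\cdot 2^n}{24\cdot 2^n-11}$. This is strictly decreasing to $\frac12$, so the inequality fails for every $n\geq 1$.
\end{itemize}

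The paper's proof rests on the same hidden assumption as yours. Its $s^{(n)}_i$ sums the full lengths $\abs{\varphi_s^n(\alpha_j)}$ over $\alpha_j\in\mg$. That equals the number of $\mg$-positions of $(\varphi^n x,\varphi^n y)$ only when $\varphi_s$ maps $\mg$ into $\mg^*$, as happens in Example~\ref{ex:discrepancysubstitution} but not in general.

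\textbf{What is true.} Your splitting argument proves a correct weaker statement once the false limit is replaced. First, $a_n(\alpha)\geq c_0\lambda_s^{n-m}a_m(\alpha)$ for $\alpha\in\mg$. This bounds the leakage by a geometric series in $\gamma/\lambda_s$, which gives a uniform bound $a_n(\alpha)\geq c\,b_n(\alpha)$. Your display then yields
\[
\frac{D_{\mg}(\varphi^nx,\varphi^ny)}{D_{\fp}(\varphi^nx,\varphi^ny)}\geq c'\,\frac{D_{\mg}(x,y)}{D_{\fp}(x,y)}
\]
for all $n$, with $c'>0$ independent of $x,y,n$. This is all that Proposition~\ref{prop:metricsequivalnet} actually uses: its conclusion becomes $D_{\mg}\geq c'M\,D_{\fp}$.
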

\begin{proof}
    Property \eqref{lem:proportion_of_ourmetric_to_Besicovitch1} is clear, since for any distinct pair of letters $\alpha={a\choose b}$ of not maximal growth (i.e.\ lying in $\A_s\setminus \mg$), all (pairs of) letters appearing in $\varphi_s(\alpha)$ are again of not maximal growth and for any  distinct pair of letters $\alpha={a\choose b}$ of  maximal growth, $\varphi(\alpha)$ contains at least one appearance of a (pair of) letters of maximal growth.

    By \eqref{lem:proportion_of_ourmetric_to_Besicovitch1}, for any $n\geq 1$, $D_{\mg}(x,y)=0$ if and only if $D_{\mg}(\varphi^n(x),\varphi^n(y))=0$, in which case \eqref{lem:proportion_of_ourmetric_to_Besicovitch2} is clear.
        Thus, we may assume without loss of generality that $D_{\mg}(\varphi^n(x),\varphi^n(y))>0$ for all $n\geq 0$.
    Let $\lambda$ be the maximal growth rate of $\varphi_s$, and let $\gamma<\lambda$ be strictly greater than all nonmaximal growth rates of $\varphi_s$. Let $C>0$ be such that $\abs{\varphi_s^n(\alpha)}\geq C\lambda^n$ for each $\alpha\in \mg$.
    Using Lemma \ref{lem:substitution_counts_differences}, for each $n\geq 1$ and $i\geq 0$ we can write
\[ \frac{D_{\fp}(\varphi^n(x),\varphi^n(y))}{D_{\mg}(\varphi^n(x),\varphi^n(y))}=\lim_{i\to\infty} \frac{t^{(n)}_{i}}{s^{(n)}_{i}},\] 
where
\[t^{(n)}_{i}=  \sum_{\abs{j}<i,\, x_j\neq y_j} \abs{\varphi_s^n{x_j\choose y_j}}\quad \text{and} \quad 
s^{(n)}_{i}=\sum_{\abs{j}<i, \, {x_j \choose y_j}\in \mg} \abs{\varphi_s^n{x_j \choose y_j}}.\]
    Write $\alpha_j = {x_j \choose y_j}$. 
    Let $n$ be big enough so that $\abs{\varphi_s^n(\alpha)}<\gamma^n$ for any $\alpha\in\A_s$ of nonmaximal growth and $\gamma^n/C\lambda^n\leq 1$. For such $n$, rewriting each term $t_{i}/s_{i}$, we get that
\begin{equation*} 
\begin{split}
\frac{t^{(n)}_{i}}{s^{(n)}_{i}} &= 1 +  \left(\sum_{\abs{j}<i, \alpha_j\in \mathcal{A}_s\setminus \mg} \abs{\varphi_s^n(\alpha_j)}\right)\cdot\left( \sum_{\abs{j}<i, \alpha_j\in \mg} \abs{\varphi_s^n(\alpha_j)}\right)^{-1} \\
&\leq 1+ \gamma^n\cdot \left(\abs{\{\abs{j}<i\mid \alpha_j\in \mathcal{A}_s\setminus \mg\}}\right)\cdot(C\lambda^n)^{-1}\cdot  \left(\abs{\{\abs{j}<i\mid \alpha_j\in  \mg\}}\right)^{-1} \\
& \leq   \abs{\{\abs{j}<i\mid x_j\neq y_j\}}\cdot  \abs{\{\abs{j}<i\mid \alpha_j\in  \mg\}}^{-1} = t^{(0)}_{i}(s^{(0)}_{i})^{-1}
\end{split}
\end{equation*}
Letting $i$ go to infinity gives the claim. 
\end{proof}

\section{Amorphic complexity of constant length substitution shifts}\label{sec:complexityofsubstitution}

This section is devoted to the proof of the following result.

\begin{theorem}\label{thm:ac_of_automatic_systems} Let $\varphi$ be a primitive substitution of  length $k\geq 2$.  The amorphic complexity of $X_{\varphi}$ is given by
\[\ac(X_{\varphi})=\frac{\log k}{\log k - \log \lambda_s},\]
where $\lambda_s$ is the discrepancy rate of $\varphi$.
\end{theorem}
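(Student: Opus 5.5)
First I dispose of the degenerate cases. By Theorem~\ref{thm:dekking}\eqref{thm:dekking4} and the definition of $\varphi_s$ through the pure base, I may assume $\varphi$ has height $1$; replacing $\varphi$ by a power does not change $X_\varphi$, and by Lemma~\ref{lem:substitution_counts_differences} it replaces $\lambda_s$ by $\lambda_s^n$, leaving the formula unchanged. So I may also assume $\varphi$ has a fixed point and $\varphi_s$ is in normal primitive form. If $\lambda_s=0$, then $X_\varphi$ is finite by Proposition~\ref{prop:sepnumberproperties}, and $\ac=0$. If $\lambda_s=k$, the system is not mean equicontinuous, hence has infinite separation numbers (Theorem~\ref{thm:equivalence_mean_equi_fin_sep_num}), and $\ac=\infty$, which again matches the formula. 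It remains to treat $1\le\lambda_s<k$. Here $X_\varphi$ is mean equicontinuous, so by Theorem~\ref{thm: amorphic complexity and box dimension} I must compute the box dimension of $[X_\varphi]$ with respect to $D_{\fp}$.

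The basic structure is self-similarity. Every $x\in X_\varphi$ can be written as $T^j\varphi^n(y)$ with $0\le j<k^n$ and $y\in X_\varphi$. The shift $T$ is an isometry, and $D_{\fp}(T^jx,T^jy)$ depends on $j$ only through a $1$-Lipschitz dependence $|j-j'|/k^n$ of the MEF coordinate. By Lemma~\ref{lem:substitution_counts_differences}(ii), one has
\[
D_{\fp}(\varphi^n x,\varphi^n y)=k^{-n}\sum_\alpha \text{freq}_\alpha(x,y)\,\bigl|\varphi_s^n(\alpha)\bigr|,
\]
which lies between $c\,k^{-n}\lambda_s^n$ and $C\,k^{-n}n^{d}\lambda_s^n$ times $D_{\fp}(x,y)$, using Lemma~\ref{lem:growth_of_letters}.

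\textbf{Upper bound.} The $k^n$ pieces $T^j\varphi^n[X_\varphi]$ cover $[X_\varphi]$, and each has diameter $O(n^d(\lambda_s/k)^n)$. Taking $\varepsilon\approx(\lambda_s/k)^n$ gives $M_\varepsilon\lesssim k^n\cdot\mathrm{poly}(n)$. Hence
\[
\overline\dim_B\le \frac{\log k}{\log k-\log\lambda_s},
\]
and the polynomial factor $n^d$ disappears in the limit.

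\textbf{Lower bound.} I need $\asymp k^n$ points that are mutually $\varepsilon$-separated at scale $(\lambda_s/k)^n$. Take a fixed point $x$ and consider $T^j x$ for $0\le j<k^n$. For $j\ne j'$, write $j-j'=k^m r$ with $k\nmid r$. Then $T^jx$ and $T^{j'}x$ are images under $T^{\cdot}\varphi^m$ of two points whose MEF coordinates differ at the first $k$-adic level. The difficulty is that their $D_{\fp}$-distance only scales like $(\lambda_s/k)^m$ if the discrepancies between them involve pairs of \emph{maximal} growth. This is precisely where $D_{\mg}$ enters. By Lemma~\ref{lem:when_d_D_is_pseudometric} it is a pseudometric, and by Lemma~\ref{lem:proportion_of_ourmetric_to_Besicovitch} the ratio $D_{\mg}/D_{\fp}$ does not decrease under iteration of $\varphi$.

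The plan for this step has two parts. First, show that there is $\eta>0$ such that any two points of $X_\varphi$ with distinct MEF images at the first level satisfy $D_{\mg}\ge\eta$. This should follow by compactness and minimality: $D_{\mg}$ is continuous on the MEF by mean equicontinuity, and points at positive $D_{\fp}$ distance have positive $D_{\mg}$ distance after applying $\varphi^n$. Second, since pairs in $\mg$ grow exactly like $\Theta(\lambda_s^n n^{d})$ and map into $\mg$, one gets
\[
D_{\fp}(\varphi^m u,\varphi^m v)\ge D_{\mg}(\varphi^m u,\varphi^m v)\ge c\,\eta\,(\lambda_s/k)^m .
\]
This yields $k^n$ points that are $c\eta(\lambda_s/k)^n$-separated, so $\underline\dim_B\ge\log k/(\log k-\log\lambda_s)$.

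I expect the main obstacle to be this uniform positivity of $D_{\mg}$ on first-level-distinct pairs. Because $\varphi_s$ is not primitive, pairs might a priori disagree only on slowly growing letters. Handling this needs the transitivity of $\mg$, so that $\pi_{\prt}$ with $\prt=\mg'$ gives a factor whose MEF, by Lemma~\ref{lem:pseudometricversusfactormap}, is still the full $\Z_k$. Equivalently, the factor is infinite, since its Besicovitch space must separate $k$-adic cosets.
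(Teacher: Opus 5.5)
Your reductions, the degenerate cases $\lambda_s\in\{0,k\}$, and the upper bound (covering $[X_\varphi]$ by the $k^n$ pieces $[T^j\varphi^n X_\varphi]$ of diameter $O(n^d(\lambda_s/k)^n)$) are fine. Your lower-bound mechanism also works \emph{provided} you have some $\eta>0$ with $D_{\mg}(u,v)\geq\eta$ whenever the images of $u,v$ in $\Z_k$ differ modulo $k$. Granting that, write $j-j'=k^mr$ with $k\nmid r$ and $T^jx=T^t\varphi^m(T^qx)$, $T^{j'}x=T^t\varphi^m(T^{q'}x)$. The bound $D_{\fp}(\varphi^m u,\varphi^m v)\geq N'(\lambda_s/k)^m D_{\mg}(u,v)$ then gives a $N'\eta(\lambda_s/k)^n$-separated set of size $k^n$. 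This bound needs only the lower growth estimate $N'\lambda_s^m$ for letters in $\mg$; your claim that all of $\mg$ grows like $\Theta(\lambda_s^nn^d)$ is false, cf.\ Example~\ref{ex:sep_substitution_not_irreducible}. This route would neatly bypass Lemma~\ref{lem:proportion_of_ourmetric_to_Besicovitch} and Lemma~\ref{lem:fractalestimates}.

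However, the existence of $\eta$ is the entire difficulty, and you do not establish it. The lower half of your early two-sided estimate in terms of $D_{\fp}(x,y)$ is this same issue; it does not follow from Lemma~\ref{lem:growth_of_letters}. By continuity of $D_{\mg}$ on $[X_\varphi]$ and compactness, $\eta$ exists if and only if $D_{\fp}(u,v)>0$ implies $D_{\mg}(u,v)>0$, i.e.\ the natural map $[X_\varphi]\to[X_\varphi]_{\mg}$ is injective.

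Neither of your justifications proves this injectivity. The idea that points at positive $D_{\fp}$-distance acquire positive $D_{\mg}$-distance after applying $\varphi^n$ cannot be the mechanism: by Lemma~\ref{lem:proportion_of_ourmetric_to_Besicovitch}\eqref{lem:proportion_of_ourmetric_to_Besicovitch1}, $D_{\mg}(\varphi^nu,\varphi^nv)=0$ iff $D_{\mg}(u,v)=0$. Your second sketch (``equivalently, the factor is infinite, since its Besicovitch space must separate $k$-adic cosets'') is circular. Moreover, the asserted equivalence fails for general factors of $\Z_k$. For composite $k$, e.g.\ $k=6$, $\Z_6\cong\Z_2\times\Z_3$ has the infinite proper factor $\Z_3$. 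A priori $D_{\mg}$ could vanish along the whole $\Z_2$-direction, which would destroy your count of $k^n$ separated points.

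Two things are missing. First, you need a proof that $Y=\pi_{\mg'}(X_\varphi)$ is infinite. The paper assumes $Y=\Z/m\Z$ with $m\mid k^n$ and picks $c$ with $\pi(x)\neq\pi(T^cx)$. Since every pair in $\mg$ has an $\mg$-pair in its $\varphi_s$-image, this gives $\pi(x)=\pi(\varphi^nx)\neq\pi(\varphi^nT^cx)=\pi(T^{ck^n}x)$, contradicting $m\mid ck^n$. Second, you need an argument that an infinite $Y$ has maximal equicontinuous factor all of $\Z_k$. The paper uses Theorem~\ref{thm:reemclemens} together with Theorem~\ref{thm:dekking}. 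Alternatively, since pairs outside $\mg$ only produce pairs outside $\mg$, $\varphi$ descends to a primitive length-$k$ substitution on $\A/{\sim_{\mg'}}$ generating $Y$, and Dekking's theorem applies directly. Then the induced factor map $[X_\varphi]\cong\Z_k\to[X_\varphi]_{\mg}\cong\Z_k$ commutes with $+1$, hence is a rotation and injective. That is exactly what your compactness argument needs.
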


The following example shows that passing to the pure base before computing the discrepancy rate is necessary in Theorem \ref{thm:ac_of_automatic_systems}.

\begin{continueexample}{ex:heightandpurebase} 
	Let
	\[\varphi(0)=010,\quad \varphi(1)=102,\quad \varphi(2)=201\]
    be a substitution of height 2 with its pure base $\varphi'$ given by
	\[\varphi'(a)=aab,\quad \varphi'(b)=aba.\]
	Note that $\varphi'$ has a coincidence and so, by Theorem \ref{thm:dekking}, $X_{\varphi}$ has discrete spectrum.
	Furthermore, the amorphic complexity of $X_{\varphi}$ and $X_{\varphi'}$ coincide. 
	However, if one were to construct the discrepancy substitution directly from $\varphi$ (as opposed from its pure base $\varphi'$) one would get the dominant eigenvalue of the discrepancy substitution equal to $3$ while the dominant eigenvalue of the discrepancy substitution of the pure base $\varphi'$ is $2$. Thus, the claim of Theorem \ref{thm:ac_of_automatic_systems} does not hold for $\varphi$ before purification.
\end{continueexample}

\begin{remark}\label{rem:Baakeresult}
    In \cite{BaakeGaehlerGohlke2025}, the authors introduce the notion of a \emph{discrepancy inflation} for a self-similar, nonperiodic topological tiling dynamical system $(\mathbb{X},\mathbb{R}^d)$ with pure-point spectrum generated by a primitive inflation, using it to establish lower and upper bounds for the amorphic complexity of $(\mathbb{X},\mathbb{R}^d)$. Though their work focuses on tilings (rather than the symbolic setting), it is easy to see that, for a substitution $\varphi$ of constant length, their discrepancy inflation corresponds to our discrepancy substitution.

    By translating their main result (\cite[Thm.\ 4.13 \& Prop.\ 4.14]{BaakeGaehlerGohlke2025}) to our framework, we obtain the following bounds:
    \[
        \frac{\log k}{\log k - \log \lambda_{min}} \leq \oac(X_{\varphi}) \leq \frac{\log k}{\log k - \log \lambda_s},
    \]
    where $\lambda_s$ (resp.\ $\lambda_{min}$) denotes the maximal (resp.\ minimal) growth rate of letters in the discrepancy substitution $\varphi_s$, and $k$ is the length of the substitution. If $\lambda_s = \lambda_{min}$, this yields a closed-form expression for the amorphic complexity of $X_{\varphi}$. However, in general, one typically expects that $\lambda_{min} < \lambda_s$; see e.g.\ Example~\ref{ex:sep_substitution_not_irreducible}.
\end{remark}

In the proof we will need the following fact from \cite{FuhrmannGroeger2020}.

 \begin{lemma}\label{lem:fractalestimates}
     Let $\varphi$ be a primitive substitution of length $k\geq 2$ and of height 1. Assume that $X_{\varphi}$ is infinite. If for  some constants $A,B >0$,
     \[A\cdot D_{\fp}(x,y)\leq D_{\fp}(\varphi(x),\varphi(y))\leq B\cdot D_{\fp}(x,y) \quad \text{for all}\quad x,y\in X_{\varphi}, 
     \] 
     then
     \[ \frac{\log k}{-\log A} \leq \underline\Dim_{B}[X] \leq \overline\Dim_{B}[X]\leq \frac{\log k}{-\log B}. \]
      \end{lemma}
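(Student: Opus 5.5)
The plan is to realise $[X]=[X_\varphi]$, with the metric induced by $D_{\fp}$, as a ``self-similar'' set: it is the union of $k^n$ translates of the image of $[X]$ under $[\varphi^n]$. The upper bound will come from counting covers and the lower bound from exhibiting well-separated points. The structural input is that for a primitive $\varphi$ of length $k$ we have $X_\varphi=\bigcup_{0\le j<k^n}T^j\varphi^n(X_\varphi)$ for every $n$, and that $T$ acts as an isometry of $D_{\fp}$. We also use the commutation rule $T^{k^m r}\varphi^m=\varphi^m T^r$.

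\emph{Upper bound.} First note that $B\le 1$ always holds. Indeed, each discrepancy between $x$ and $y$ produces at most $k$ discrepancies between $\varphi(x)$ and $\varphi(y)$, while the averaging window grows by the factor $k$; so $D_{\fp}(\varphi x,\varphi y)\le D_{\fp}(x,y)$. If $B=1$ the bound reads $\infty$ and there is nothing to prove, so assume $B<1$. Iterating the hypothesis gives $\operatorname{diam}_{D_{\fp}}\varphi^n(X)\le B^n$, because $D_{\fp}\le 1$. Since $T$ is an isometry, each of the $k^n$ sets $T^j\varphi^n(X)$ also has diameter at most $B^n$. An $\varepsilon$-separated set with $\varepsilon>B^n$ therefore meets each piece in at most one point, so $M_\varepsilon([X])\le k^n$. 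Choosing $n$ with $B^{n}<\varepsilon\le B^{n-1}$ and letting $\varepsilon\to0$ yields $\overline{\Dim}_B[X]\le \log k/(-\log B)$.

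\emph{Lower bound.} Fix $x\in X$ and take the $k^n$ points $T^j\varphi^n(x)$, $0\le j<k^n$. Let $j<j'$ and write $j'-j=k^m r$ with $k\nmid r$ and $m\le n-1$. Using the isometry property and the commutation rule,
\[
D_{\fp}\bigl(T^j\varphi^n x,\,T^{j'}\varphi^n x\bigr)=D_{\fp}\bigl(\varphi^m u,\,\varphi^m T^r u\bigr)\ge A^m D_{\fp}(u,T^r u),\qquad u=\varphi^{n-m}x .
\]
The key claim is then
\[
c:=\inf\{D_{\fp}(u,T^ru)\mid u\in X,\ k\nmid r\}>0 .
\]
Granting it, the $k^n$ points are pairwise $cA^{n-1}$-separated, because $A\le1$. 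Hence $M_{cA^{n-1}}([X])\ge k^n$, and interpolating between consecutive scales gives $\underline{\Dim}_B[X]\ge \log k/(-\log A)$. (If $A=0$ the lower bound is $0$ and trivial.)

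\emph{Main obstacle: proving $c>0$.} Here I would use Theorem~\ref{thm:dekking}. Since $X$ is infinite and of height $1$, its MEF is $\Z_k$, with factor map $\pi$ satisfying $\pi(T^r u)=\pi(u)+r$. In the mean equicontinuous case (the case where the lemma is applied, and implied by finiteness of the bounds via Proposition~\ref{prop:sepnumberproperties}), $[X]$ with $D_{\fp}$ is the MEF. So $D_{\fp}(u,v)=\rho(\pi u,\pi v)$ for some continuous, translation-invariant metric $\rho$ on $\Z_k$. Then $D_{\fp}(u,T^ru)=\rho(0,r)$, and $r$ ranges over the compact set $\Z_k\setminus k\Z_k$, which does not contain $0$. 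Continuity and positivity of $\rho$ there give $c>0$. The delicate points are justifying this identification of $[X]$ with $\Z_k$, and handling the non-mean-equicontinuous case. In the latter case I would instead argue directly: $\lambda_s=k$, and the hypothesis forces $A$ small enough that the claimed lower bound is harmless, or one falls back on infinite separation numbers.
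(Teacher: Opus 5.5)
Your argument is correct, and it is self-contained where the paper is not. The paper proves the lemma only by citing \cite[Thm.~7.5 \& Prop.~7.7]{FuhrmannGroeger2020}. Those results rest on the self-similar decomposition $[X]=\coprod_{0\le i<k}[T^i\varphi(X)]$ into pairwise disjoint closed pieces (cf.\ Lem.~7.2 there, which the paper also uses in the proof of Proposition~\ref{prop:metricsequivalnet}). That is, $[X]$ is treated as the attractor of the system $\{[T^i\circ\varphi]\}_{0\le i<k}$, and an iterated-function-system box-dimension estimate gives the bounds.

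You reproduce this mechanism by hand. Covering by the $k^n$ pieces $T^j\varphi^n(X)$, each of diameter at most $B^n$, gives the upper bound. Your constant $c>0$ is exactly the gap between distinct first-level pieces, since $u$ and $T^ru$ with $k\nmid r$ lie in different sets $T^i\varphi(X)$. You derive it from the translation-invariant metric transported to $\Z_k$ instead of quoting disjointness of the pieces. Your route buys transparency, and the lower bound needs only the orbit of a single point; the citation buys brevity.

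Two small corrections:
\begin{itemize}
\item ``$B\le 1$ always holds'' should read ``the hypothesis always holds with $B=1$''. For $B>1$ the asserted upper bound is negative and false, so the lemma must be read with $B\le1$. This matches its application, where $B_n\to0$.
\item In the non-mean-equicontinuous case the hypothesis does not force $A$ to be small. For Thue--Morse ($a\mapsto ab$, $b\mapsto ba$) one has $D_{\fp}(\varphi x,\varphi y)=D_{\fp}(x,y)$, so $A=B=1$. The right disposal is your second option. By Theorem~\ref{thm:equivalence_mean_equi_fin_sep_num} the separation numbers are infinite, so by the Lipschitz equivalence of $D_\delta$ and $D_{\fp}$ the space $[X]$ is not totally bounded. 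Hence $\underline{\Dim}_B[X]=\infty$ and the lower bound is trivial, while your own covering argument shows that $B<1$ cannot occur there.
\end{itemize}
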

\begin{proof}
    This follows from \cite[Thm.\ 7.5 \& Prop. 7.7]{FuhrmannGroeger2020}, compare also with the proof of Theorem 7.6. in \cite{FuhrmannGroeger2020}. 
\end{proof}

To prove Theorem \ref{thm:ac_of_automatic_systems}, we first reduce to the pure base (height $1$); the extreme cases $\lambda_s\in\{0,k\}$ are treated by  Proposition \ref{prop:sepnumberproperties}.  
To establish the formula $\ac(X_{\varphi}) = \log k \, \bigl(\log k - \log \lambda_s\bigr)^{-1}$
we apply Lemma \ref{lem:fractalestimates} to the powers $\varphi^n$ of $\varphi$; this requires sufficiently sharp estimates of the form
\[
A_n \leq 
\frac{D_{\fp}(\varphi^n(x),\varphi^n(y))}{D_{\fp}(x,y)} 
\leq B_n .
\]
The upper bounds are relatively straightforward and follow from the fact that all letters of the discrepancy substitution $\varphi_s$ have growth rates uniformly bounded by $\lambda_s$. The main difficulty is to obtain lower bounds that are sharp enough; for this one has to show that— in an appropriately quantitative sense — pairs of letters $\alpha={a\choose b}$ that are \emph{not} of maximal growth with respect to discrepancy substitution $\varphi_s$ do not contribute to the asymptotics.  
The core geometric idea is therefore to replace the Besicovitch pseudometric $D_{\fp}$ by the seemingly coarser pseudometric $D_{\mg}$, built only from pairs of letters of maximal growth, and to show that these two pseudometrics are in fact Lipschitz equivalent on $X_{\varphi}$ (Proposition \ref{prop:metricsequivalnet}). Since $D_{\mg}$ detects only pairs $\alpha={a\choose b}$ with growth rate $\lambda_s$, the derivation of the lower bounds then proceeds in complete analogy with the upper-bound estimates, yielding the claimed formula for the amorphic complexity.

\begin{proposition}\label{prop:metricsequivalnet}
    Let $\varphi$ be a primitive substitution of  length $k\geq 2$ and height $1$. Assume that $X_{\varphi}$ is infinite and has discrete spectrum.
    The  Besicovitch pseudometric $D_{\fp}$ and the pseudometric $D_{\mg}$ are Lipschitz equivalent on $X_{\varphi}$.
\end{proposition}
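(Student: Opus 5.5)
One inequality is immediate: by Remark \ref{rem:different_version_Besicovitch}, $D_{\mg}\leq D_{\fp}$ on $X_{\varphi}$. So the task is to find $c>0$ with $D_{\mg}(x,y)\geq c\,D_{\fp}(x,y)$ for all $x,y\in X_{\varphi}$. The plan has three steps: reduce to pairs of the form $\varphi^n(x'),\varphi^n(y')$, use Lemma \ref{lem:proportion_of_ourmetric_to_Besicovitch}, and control the ratio by compactness in the Besicovitch space.

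\textbf{Step 1: reduction via the substitution structure.} Every $x\in X_{\varphi}$ can be written as $T^{j}\varphi^n(x')$ with $x'\in X_{\varphi}$ and $0\leq j<k^n$. Since $X_\varphi$ has discrete spectrum, it is mean equicontinuous, and the MEF $\Z_k$ is reached through the $k$-adic structure. Two points with $D_{\fp}(x,y)>0$ but small have the same $k$-adic coordinate up to level $n$, where roughly $D_{\fp}(x,y)\approx \lambda_s^n k^{-n}$ at most. After shifting, one may write $x=T^j\varphi^n(x')$ and $y=T^j\varphi^n(y')$ with the same $j$, once $n$ is chosen so that the level-$n$ coordinates agree. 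Shift invariance of both pseudometrics then reduces the claim to pairs $(\varphi^n(x'),\varphi^n(y'))$. Pairs at distinct cosets of $k^n\Z$ are separated by a uniform positive amount in $D_{\fp}$. For those pairs it suffices to show $D_{\mg}$ is also bounded below there, and this follows from the fact that $D_{\mg}$ is a pseudometric whose quotient is the MEF of the factor $\pi_{\prt}(X_\varphi)$ (Lemma \ref{lem:pseudometricversusfactormap}).

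\textbf{Step 2: the ratio does not decrease.} By Lemma \ref{lem:proportion_of_ourmetric_to_Besicovitch}\eqref{lem:proportion_of_ourmetric_to_Besicovitch2}, the ratio $D_{\mg}/D_{\fp}$ does not decrease under $\varphi^n$ for large $n$. Writing $\varphi^{n}=\varphi^{n-N}\circ\varphi^N$ and applying the lemma, the infimum of the ratio over pairs $(\varphi^n x',\varphi^n y')$ is at least the infimum over pairs that are far apart in $D_{\fp}$, say $D_{\fp}(x',y')\geq \eta$. This needs a version of the lemma with $n$ uniform in $x,y$. The proof of the lemma in fact gives uniform $n$, since the chosen $n$ depends only on $\gamma,\lambda,C$. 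So the problem reduces to a lower bound on $D_{\mg}$ for pairs that are $\eta$-separated in $D_{\fp}$.

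\textbf{Step 3: compactness, the main obstacle.} We need $\inf\{D_{\mg}(x,y): D_{\fp}(x,y)\geq\eta\}>0$. The set $[X_\varphi]$ is compact, being the continuous image of $X_{\varphi}$ by mean equicontinuity, and $D_{\mg}$ is continuous with respect to $D_{\fp}$. So it suffices to show that $D_{\mg}(x,y)=0$ forces $D_{\fp}(x,y)=0$. I would argue this as follows. If $D_{\mg}(x,y)=0$ while $D_{\fp}(x,y)>0$, then all discrepancies between $x$ and $y$ lie, up to density zero, in non-maximal pairs. Applying $\varphi^n$ and using \eqref{eq:proportion_of_ourmetric_to_Besicovitch} together with the upper estimate via $\gamma^n$ gives $D_{\fp}(\varphi^n x,\varphi^n y)\leq (\gamma/k)^nD_{\fp}(x,y)$. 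On the other hand, for mean equicontinuous infinite systems, the points $[\varphi^n x]$ and $[\varphi^n y]$ are images under the $k$-adic expansion of the MEF. This expansion should give $D_{\fp}(\varphi^n x,\varphi^n y)\geq (\lambda_{\min}'/k)^n\cdot c$, with growth at least the rate of letters actually present. I expect this last comparison to be the delicate point, and I would attempt it by contradiction: use minimality to find, inside the discrepancy set of $x,y$, a pair whose $\varphi_s$-iterates eventually contain a maximal-growth letter. Such a pair exists because $\mg$ is transitive (Lemma \ref{lem:when_d_D_is_pseudometric}) and primitivity mixes all letters. That would contradict $D_{\mg}=0$ via Lemma \ref{lem:proportion_of_ourmetric_to_Besicovitch}\eqref{lem:proportion_of_ourmetric_to_Besicovitch1}.
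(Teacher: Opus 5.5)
\textbf{There is a genuine gap.} Your Steps 1--2 and the compactness reduction in Step 3 match the Lipschitz part of the paper's proof, which uses the decomposition $[X_\varphi]=\coprod_{0\le i<k}[T^i\varphi(X_\varphi)]$ together with the monotonicity of $D_{\mg}/D_{\fp}$ from Lemma~\ref{lem:proportion_of_ourmetric_to_Besicovitch}, with $n$ uniform after passing to a power of $\varphi$. Everything, however, rests on the claim that $D_{\mg}(x,y)=0$ forces $D_{\fp}(x,y)=0$. Equivalently, the sets $[T^i\varphi(X_\varphi)]$ must remain disjoint and closed for $D_{\mg}$. You do not prove this.

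In Step 1 you say that the lower bound on $D_{\mg}$ across cosets follows from $[X_\varphi]_{\mg}$ being the MEF of $\pi_{\prt}(X_\varphi)$. That only helps once you know this MEF is all of $\Z_k$, which is exactly the point at issue.

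The mechanism you propose in Step 3 cannot work. As in the proof of part (i) of Lemma~\ref{lem:proportion_of_ourmetric_to_Besicovitch}, every pair occurring in $\varphi_s^n(\alpha)$ for a non-maximal $\alpha$ is again non-maximal. So if the discrepancies of $x,y$ are, up to density zero, non-maximal pairs, no iterate ever produces a maximal-growth pair. Transitivity of $\mg$ and primitivity of $\varphi$ do not change this, since $\varphi_s$ need not be primitive. For the same reason, your upper and lower growth estimates both involve only rates of non-maximal letters, all below $\gamma$, so comparing them gives no contradiction. The obstruction is global, not letter-by-letter.

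\textbf{The missing idea is the paper's.}
\begin{enumerate}
\item First show that $Y=\pi_{\prt}(X_\varphi)$ is infinite. If it were finite, then $Y\cong\Z/m\Z$ would be an equicontinuous factor of $\Z_k$, so $m\mid k^n$ for large $n$. Pick $c$ so that $(x,T^cx)$ contains a maximal-growth pair; then $\pi(x)\neq\pi(T^cx)$.
\item Maximal pairs always produce maximal pairs under $\varphi_s$. Hence $\pi(x)=\pi(\varphi^n x)\neq\pi(\varphi^n T^c x)=\pi(T^{ck^n}x)$, which contradicts the $m$-periodicity of $\pi(x)$.
\item Now apply Theorem~\ref{thm:reemclemens} together with Theorem~\ref{thm:dekking}. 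The MEF of $Y$, which is $[X_\varphi]_{\mg}$ by Lemma~\ref{lem:pseudometricversusfactormap}, is $\Z_k$.
\item So the natural map $[X_\varphi]\cong\Z_k\to[X_\varphi]_{\mg}\cong\Z_k$ is a factor map between odometers, hence a conjugacy. This is precisely the zero-set equality your compactness argument needs.
\end{enumerate}

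Some such structural input is genuinely required. The zero set of $D_{\mg}$ corresponds to a closed subgroup $H\le\Z_k$ with $z\in H\iff kz\in H$. For composite $k$ there are nontrivial such subgroups, e.g.\ $H=\{0\}\times\Z_3$ in $\Z_2\times\Z_3\cong\Z_6$, with infinite quotient $\Z_2$. So infiniteness of $Y$ plus self-similarity alone would not rule them out.
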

\begin{proof}
    We may assume without loss of generality that $\varphi$ has a fixed point $x$. 
    By Theorem \ref{thm:dekking}, the maximal equicontinuous factor of $X_{\varphi}$ is given by $\Z_k$ and is conjugate to $[X_{\varphi}]$, since $X_{\varphi}$ is mean equicontinuous.  Let $\mg\subset \mathcal{A}_s$ be the set of letters of maximal growth with respect to $\varphi_s$ and let $\prt = \mg' = \A_s\setminus \mg$ be the complement of $\mg$.
    Let $\pi = \pi_{\prt}$ be the factor map from $X_{\varphi}$ onto the subshift $\pi_{\prt}(X_{\varphi})\subset \left(\A_s/\sim_{\prt}\right)^{\Z}$ defined in \eqref{eq:factormapfrompartition} in Section \ref{sec:pseudometrics}.
    
    It is not hard to see that $Y=\pi(X_{\varphi})$ is infinite: suppose for contradiction that $Y$ is finite. Then $Y$ itself is a minimal equicontinuous system which is a factor of $\Z_k$ and so $Y = \Z/m\Z$ with $m=\prod q^{s_{q}}$, where $q$'s are the primes dividing $k$ and $s_q\geq 0$.
    Since $\pi$ is a factor map, we have
    \begin{equation}\label{eq:whynotfinite}
        \pi(x) = \pi(T^{nm}x)\quad \text{for all}\quad n\in\Z.
    \end{equation}
    Clearly, there exists $c\in Z$ such that $\pi(x)\neq \pi(T^cx)$---it is enough to shift $x$ until we see a pair $\alpha = {a\choose b}$ of maximal growth (i.e.\ lying in $\mg$) in ${x \choose T^cx}$.
    Let $n$ be big enough, so that $m$ divides $k^n$.
    Since for any pair of maximal growth $\alpha$, $\varphi(\alpha)$ has to contain some other pair $\alpha'$ of maximal growth, we have
    \[\pi(x) = \pi(\varphi^n(x)) \neq \pi(\varphi^n(T^cx)) = \pi(T^{ck^n}x).
    \]
    This contradicts \eqref{eq:whynotfinite}, since $m$ divides $k^n$.

    Since $\pi(X_{\varphi})$ is infinite, by Theorem \ref{thm:reemclemens}, it still has $\Z_k$ as the maximal equicontinuous factor. 
    By Lemma \ref{lem:pseudometricversusfactormap}, $[X]_{\mg}$ is the $\mathrm{MEF}$ of $\pi(X_{\varphi})$ and hence it is $\Z_k$.
    This means that the natural factor map $[X_{\varphi}]\to[X_{\varphi}]_{\mg}$ is in fact a selfmap from the equicontinuous system $\Z_k$ to itself, and so, it is a conjugacy. In particular, the pseudometrics $D_{\fp}$ and $D_{\mg}$ are equivalent  and $D_{\fp}(x,y)=0$ if and only if $D_{\mg}(x,y)=0$.
    
    We will now show that $D_{\fp}$ and $D_{\mg}$ are furthermore Lipschitz equivalent, that is, the map $[X_{\varphi}]\to[X_{\varphi}]_{\mg}$ is bi-Lipchitz. 
Taking a power of $\varphi$ if necessary, we can assume that \eqref{eq:proportion_of_ourmetric_to_Besicovitch} in Lemma \ref{lem:proportion_of_ourmetric_to_Besicovitch} holds for all $n\geq 1$.
    It is clear that $D_{\mg}\leq D_{\fp}$. 
    Since $D_{\mg}$ and $D_{\fp}$ are equivalent, we can write 
\[[X_{\varphi}]=\coprod_{0\leq i<k} [T^{i}\varphi(X_{\varphi})], \]
with $[T^i\varphi(X)]$  pairwise disjoint and closed in $D_{\mg}$ metric, see also \cite[Lem.\ 7.2]{FuhrmannGroeger2020}. 
    Thus, there exists some $M>0$ such that $D_{\mg}(x,y)\geq M\geq M D_{\fp}(x,y)$ for all $x,y\in [X_{\varphi}]$ lying in different sets $[T^i\varphi(X)]$. 
    Now, let $x,y\in [X_{\varphi}]$ be distinct points lying in the same set $[T^i\varphi(X_{\varphi})]$.  Since $x$ and $y$ are different, there exist some $n\geq 1$, $0\leq c_n<k^n$ and $x',y'$ lying in different sets $[T^i\varphi(X_{\varphi})]$ such that $x=T^{c_{n}}\varphi^n(x')$ and $y=T^{c_{n}}\varphi^n(y')$. By Lemma \ref{lem:proportion_of_ourmetric_to_Besicovitch},
\[\frac{D_{\mg}(T^{c_{n}}\varphi^n(x'),T^{c_{n}}\varphi^n(y'))}{D_{\fp}(T^{c_{n}}\varphi^n(x'),T^{c_{n}}\varphi^n(y'))} =
\frac{D_{\mg}(\varphi^n(x'),\varphi^n(y'))}{D_{\fp}(\varphi^n(x'),\varphi^n(y'))}
\geq \frac{D_{\mg}(x',y')}{D_{\fp}(x',y')}\geq M,\]
which finishes the proof.
\end{proof}

\begin{proof}[Proof of Theorem \ref{thm:ac_of_automatic_systems}]

Let $X_{\varphi}$ be a system generated by a primitive substitution of length $k$ and  height  $h$. 
    By Proposition \ref{prop:sepnumberproperties}, we may assume that $X_{\varphi}$ is infinite and mean equicontinuous; in this case $0<\lambda_s<k$.
	By Theorem \ref{thm:dekking}, there exists a substitution $\varphi'$ of length $k$ and height 1 such that $X_{\varphi}$ is conjugate to the $h$-height suspension of $X_{\varphi'}$, and so $\ac(X_{\varphi})=\ac(X_{\varphi'})$, assuming that it exists.
    Clearly, $X_{\varphi'}$ is again inifinite and mean equicontinous.
    Hence, we may assume without loss of generality that the original substitution $\varphi$ is of height 1.
    Furthermore, by taking a power of $\varphi$ if necessary, we may assume that $\varphi_s$ is in normal primitive form.
    
Let $(\lambda_s,d)$ be the discrepancy type of $\varphi$; in particular, the dominant eigenvalue of the discrepancy substitution $\varphi_s$ is $\lambda_s$. Let $\mg\subset \mathcal{A}_s$ be the set of letters of maximal growth (with respect to $\varphi_s$); by Lemma \ref{lem:growth_of_letters}, 
\begin{equation}\label{eq:bounds_on_letter_growth} \abs{\varphi_s^n(\alpha)}\leq Nn^d\lambda_s^n  \text{ for } \alpha\in \A_s \quad \text{\&}\quad         N'\lambda_s^n\leq \abs{\varphi_s^n(\alpha)}\text{ for } \alpha\in \mg 
\end{equation} for some constants $N,N'>0$ and all $n\geq 1$.

First we show that for some constants $C, C'>0$:
\begin{equation}\label{eq:IFS}
\frac{C'\lambda_s^n}{k^n}\leq \frac{D_{\fp}(\varphi^n(x),\varphi^n(y))}{D_{\fp}(x,y)}\leq \frac{Cn^d\lambda_s^n}{k^n}\quad \text{for}\quad x,y\in[X]\, \text{ and }\, n\geq 1.
\end{equation}
    For fixed $n\geq 1$ we write 
\begin{equation}\label{eq:IFS_ineq}D_{\fp}(\varphi^n(x),\varphi^n(y))=\lim_{i\to\infty} \frac{\abs{\{j< ik^n\mid \varphi^n(x)_j\neq \varphi^n(y)_j\}}}{ik^n}=
\lim_{i\to\infty} \frac{\sum_{j<i,\, \alpha_j\in \A_s} \abs{\varphi_s^n(\alpha_j)}}{ik^n},
\end{equation} 
where $\alpha_j = {x_j\choose y_j}$; note that the use of the one-sided averaging and the passage to the subsequence $k^n\N$ in \eqref{eq:IFS_ineq} are justified by Remark \ref{rem:folner}.

Since all letters $\alpha\in\A_s$ have the growth type at most $(\lambda_s,d)$, by \eqref{eq:bounds_on_letter_growth}, we have that
\[ D_{\fp}(\varphi^n(x),\varphi^n(y))\leq \lim_{i\to\infty}  \frac{Nn^d\lambda_s^n}{ik^n}\abs{\{j<i\mid x_j\neq y_j\}}= \frac{Nn^d\lambda_s^n}{k^n}D_{\fp}(x,y),\]
which shows the second inequality in \eqref{eq:IFS} with $C=N$.

Ignoring in \eqref{eq:IFS_ineq} all letters $\alpha_j={x_j\choose y_j}\in \A_s\setminus \mg$, we have
\begin{equation}\label{eq:lowerbound1}D_{\fp}(\varphi^n(x),\varphi^n(y))\geq\lim_{i\to\infty}  \frac{1}{ik^n}\sum_{j<i, \alpha_j\in \mg} \abs{\varphi_s^n(\alpha_j)} \geq \lim_{i\to\infty}  \frac{N'\lambda_s^n}{ik^n} \abs{\{j<i\mid \alpha_j\in \mg\}}, 
\end{equation}
 using the lower bound in \eqref{eq:bounds_on_letter_growth} for letters in $\mg$.
    By Proposition \ref{prop:metricsequivalnet}, the pseudometrics $D_{\fp}$ and $D_{\mg}$ are Lipschitz equivalent, and thus there is $M>0$ such that $D_{\mg}(x,y)\geq M D_{\fp}(x,y)$ for all $x,y\in X_{\varphi}$. Hence, continuing \eqref{eq:lowerbound1}, we have  \[D_{\fp}(\varphi^n(x),\varphi^n(y))\geq \frac{N'\lambda_s^n}{k^n} D_{\mg}(x,y)\geq \frac{N'M\lambda_s^n}{k^n} D_{\fp}(x,y).\]
 This shows the first inequality in \eqref{eq:IFS} with $C'=N'M$.

For each $n\geq 1$ let 
\[A_n=\frac{C'\lambda_s^n}{k^n}\quad \text{and}\quad B_n=\frac{Cn^d\lambda_s^n}{k^n}.\]
The systems generated by $\varphi^n$ are the same for all $n\geq 1$; in particular, $\overline{\ac}(X_{\varphi})=\overline{\ac}(X_{\varphi^{n}})$ (resp.\ $\underline{\ac}(X_{\varphi})=\underline{\ac}(X_{\varphi^{n}})$). Since $\varphi^n$ is of length $k^n$, by Lemma \ref{lem:fractalestimates} applied to each $\varphi^n$, we get that
\[\frac{\log k^n}{- \log A_n}\leq\underline{\ac}(X)\leq \overline{\ac}(X)\leq\frac{\log k^n}{- \log B_n}\quad \text{for}\quad n\geq 1.\]
Letting $n$ go to infinity, we see that the amorphic complexity of $X$ exists and equals $\ac(x)=\frac{\log k}{\log k - \log \lambda_{s}}.$
\end{proof}

\begin{continueexample}{ex:discrepancysubstitution}
    Since the Besicovitch spaces of $X_{\varphi}$ and $\pi_{\mg'}(X_{\varphi})$ are always Lipschitz equivalent, one could ask if the systems $X_{\varphi}$ and $\pi_{\mg'}(X_{\varphi})$ have to be conjugate. This is, in general, not true: $\pi_{\mg'}(X_{\varphi})$ can be a proper factor of $X_{\varphi}$. 
    Recall the substitution  $\varphi\colon \A\to\A^*$ given by 
\[a\mapsto baac,\quad b\mapsto bbca,\quad c\mapsto bcba\] 
and the fact that the set of letters of maximal growth w.r.t.\ $\varphi_s$ is given here by $\mg = \{{a\choose b}, \,{a\choose c}\}$.
    Thus $\mg'=\{{b\choose c}\} \cup \A$ and the factor map $\pi=\pi_{\mg'}$ is a coding identifying letters $b$ and $c$. One sees easily that the factor map $\pi$ is not bijective: two different points
    \[
    x = \dots\varphi^2(b)\varphi(b).b\varphi(ca)\varphi^2(ca)\dots
    \] and
    \[
    y = \dots \varphi^2(b)\varphi(b).c\varphi(ba)\varphi^2(ca)\dots
    \] lying in $X_{\varphi}$ have the same image under $\pi$.
\end{continueexample}

By \cite{FuhrmannGroeger2020}, the amorphic complexity of a minimal automatic system is
either zero or lies in $[1, \infty]$ (this follows now also from Theorem \ref{thm:ac_of_automatic_systems} and Proposition \ref{prop:sepnumberproperties}). 
It is not hard to see that in this class of systems one can realise a dense set of values in $[1,\infty)$ for amorphic complexity.

\begin{proposition}\label{prop:denseness}
 Let $\A$ be an alphabet of size at least 2. In the class of minimal automatic systems over $\A$, amorphic complexity takes a dense set of values in $[1,\infty)$.   
\end{proposition}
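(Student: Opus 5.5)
By Theorem \ref{thm:ac_of_automatic_systems}, it suffices to exhibit a family of primitive constant length substitutions over a two-letter alphabet $\{a,b\}\subseteq\A$ whose values $\log k/(\log k-\log\lambda_s)$ are dense in $[1,\infty)$. The map $t\mapsto 1/(1-t)$ is a homeomorphism from $[0,1)$ onto $[1,\infty)$. So the task is to realise a dense set of values of $t=\log\lambda_s/\log k$ in $[0,1)$. For the binary alphabet there is a single pair ${a\choose b}$, so $\varphi_s$ is a one-letter substitution. Its discrepancy rate is therefore simply $\lambda_s=\abs{\{0\leq i<k\mid \varphi(a)_i\neq\varphi(b)_i\}}$, an integer $r$. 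We may also take the substitution over $\{a,b\}$ and regard the resulting system as a system over $\A$, since the subshift only uses letters from $\{a,b\}$.

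For given $k\geq 3$ and $1\leq r\leq k-1$, define $\varphi(a)=a\,b\,a^{k-2}$. Define $\varphi(b)$ by flipping exactly $r$ positions of $\varphi(a)$, choosing positions among $2,\dots,k-1$ together with possibly position $1$. We keep position $0$ equal to $a$ in both words, so that both words begin with $a$; this gives a coincidence, hence discrete spectrum. We must check three things: primitivity, height one, and that $X_\varphi$ is infinite, i.e.\ $\lambda_s=r\neq0$, which holds automatically by Proposition \ref{prop:sepnumberproperties}. For primitivity it suffices that both $\varphi(a)$ and $\varphi(b)$ contain both letters and start with $a$. Ensuring this for every $r$ needs a small case choice (for $r=k-1$, flip positions $1,\dots,k-1$, giving $\varphi(b)=a\,a\,b^{k-2}$, which contains both letters when $k\geq3$).

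For the height, both words start with $a$, so the fixed point $x=\lim\varphi^n(a)$ has $x_0=a$. Then $x_1=b$ and $x_{k}=\varphi(b)_0=a$, and $x_0=x_{k^n\cdot 1}$. We want $x_0=x_1$-type coincidences at gaps coprime to everything; it is simplest to arrange $x_m=a$ for some $m$ with $\gcd$ considerations giving $h=1$. For instance, $\varphi(a)$ has $a$ at positions $0$ and $2$, so $2$ lies in $\{m\mid x_0=x_m\}$, and also $k$ does. The height then divides $\gcd(2,k)$ and is coprime to $k$, so $h=1$. This step is where care is needed, and the main obstacle is simply to make a uniform choice of flip positions that keeps positions $0$ and $2$ equal to $a$ in $\varphi(a)$, keeps both letters present, and allows every $r\in\{1,\dots,k-1\}$. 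If the choice above conflicts for small $k$, I would restrict to $k\geq 5$ and flip only among positions $1,3,4,\dots,k-1$ plus one further position, which costs nothing since we only need large $k$.

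Finally, the set $\{\log r/\log k\mid k\geq 5,\ 1\leq r\leq k-1\}$ is dense in $[0,1)$. Given $t\in[0,1)$, take $r=\lceil k^t\rceil$ (which is at most $k-1$ for large $k$); then $\log r/\log k-t=O(k^{-t}/\log k)\to0$. Composing with the continuous map $t\mapsto1/(1-t)$ yields density of $\ac(X_\varphi)$ in $[1,\infty)$. By Theorem \ref{thm:dekking} and minimality under primitivity, these are minimal automatic systems over $\A$.
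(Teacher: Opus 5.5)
Your binary construction is correct, and it uses the same mechanism as the paper. Every pair of distinct letters separates in exactly $r$ columns, so $\varphi_s$ has constant length, $\lambda_s=r$, and Theorem~\ref{thm:ac_of_automatic_systems} reduces everything to density of $\log r/\log k$ in $[0,1)$. Primitivity, $\lambda_s=r$, and the estimate with $r=\lceil k^t\rceil$ all check out. For the height, the common first letter already gives $x_k=\varphi(x_1)_0=a=x_0$. Hence $\gcd\{m\mid x_0=x_m\}$ divides $k$ and $h=1$, without using position $2$. Your worry about conflicts for small $k$ is unfounded: the flips only change $\varphi(b)$, and never at position $0$, so the fallback to $k\geq5$ is unnecessary. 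The paper instead fixes $k$ and uses length $k^q$ with $l=k^p$ separating columns, which hits each rational $p/q$ exactly; your varying-$k$ approximation works just as well.

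The one genuine gap is your reduction to a two-letter subalphabet $\{a,b\}\subseteq\A$. The paper builds substitutions $\varphi\colon\A\to\A^*$ on the \emph{whole} alphabet. It takes $l$ columns that are bijections of $\A$ and $k^n-l$ constant columns, and uses $k^n\geq\abs{\A}!+\abs{\A}$ to ensure every $\varphi(c)$ contains every letter, i.e.\ primitivity on all of $\A$. That effort only makes sense if ``minimal automatic systems over $\A$'' means systems $X_\varphi$ with $\varphi$ primitive on $\A$, so that all letters occur. Under your weaker reading (any subshift of $\A^{\Z}$), the statement is a trivial corollary of the case $\abs{\A}=2$ and the hypothesis on $\A$ plays no role. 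Under the intended reading, your argument proves only $\abs{\A}=2$.

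The fix stays within your framework. Let $\abs{\A}=q$ and take length $k\geq 2q$. Make column $0$ constant. Use $l$ bijective columns: if $l\geq q$, include the $q$ powers of a $q$-cycle, otherwise identities. Use $k-l$ constant columns, covering all $q$ letters when $k-l\geq q$. Since $k\geq 2q$, one of $l\geq q$ or $k-l\geq q$ always holds, so each $\varphi(c)$ contains every letter. The constant column $0$ gives height $1$ exactly as in your argument. Every pair ${c\choose d}$ separates precisely in the $l$ bijective columns, so $\varphi_s$ has constant length $l$ and $\lambda_s=l$. Your density argument then applies verbatim.
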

\begin{proof}
    Fix $k\geq 2$ and let $t=\abs{A}! +\abs{A}$.
       It is not hard to see that for any  $n$ such that $k^n\geq t$  and any $1\leq l<k^n$ there exists a primitive substitution $\varphi$ of length $k^n$ and height $1$ such that $l$ of the column sets of $\varphi$ have size $\abs{\A}$ (i.e.\ are bijective) and the rest of $k^n-l$ column sets have size 1 (i.e.\ form coincidences). Here, the requirement that the length $k^n$ of the substitution is  $\geq \abs{A}! +\abs{A}$ means that either $l\geq\abs{A}!$ or $k^n-l\geq\abs{A}$, which lets us easily guarantee that we can find a \emph{primitive} substitution satisfying all the above.

   It is clear that for such $\varphi$ its discrepancy substitution $\varphi_s$ is of constant length $l$ and so its discrepancy rate is $\lambda_s=l$.
Thus, the values of amorphic complexity for these substitutions form the set
\[
\mathcal{S}
=
\left\{
\frac{n\log k}{\,n\log k - \log l\,}
\mid n \ge \log_k t,\  1\leq l < k^n \right\}.
\]
Putting $r := \log l/(n\log k)$, we can write
$\frac{n\log k}{n\log k - \log l} = \frac{1}{1-r}$;
note that  $0 \le r < 1$ (since $1\leq l < k^n$). Since the map
$[0,1)\to[1,\infty),$ $r \longmapsto \frac{1}{1-r}$
is a homeomorphism, it suffices to show that the set
\[
\mathcal{S}'
:=
\left\{
\frac{\log l}{n\log k}
\mid\;
n \geq \log_k t,\ 1 \le l < k^n
\right\}
\]
is dense in $(0,1)$. Now, this is clear, since  $\mathcal{S}'$ contains all rational numbers in $(0,1)$: For  $p/q$ with $q$ big enough and $0 < p < q$, it suffices to take $n=q$ and
$l = k^p<k^n$. 
\end{proof}

\section{Tameness and nullness of constant length substitution systems}\label{sec:tamenessandnullness}

	Let $\varphi$ be a primitive substitution of  length $k\geq 2$. 
	The amorphic complexity of $X_{\varphi}$ is either zero or lies in $[1,\infty]$.
	One has $\ac(X_{\varphi})=0$ if and only if $X_{\varphi}$ is finite
	and $\ac(X_{\varphi}) = \infty$ if and only if $X_{\varphi}$ does not have discrete spectrum.
	In this section we show that for an infinite $X_{\varphi}$, $\ac(X_{\varphi})=1$ if and only if $X_{\varphi}$ is null and that nullness and tameness of $X_{\varphi}$ are equivalent. 
		To this end, Lemma \ref{prop:kernelandnullness} below will be a crucial step.

\begin{lemma}\label{prop:kernelandnullness}
	Let $k\geq 2$. Let $x$ be a $k$-automatic sequence.
	Assume that  for each $m\geq 1$, $x$ appears at most once as $k^m$-$\AP$ in $x$; that is
	$x_n = x_{i + nk^m}$, $n\in\Z$
	for at most one $0\leq i<k^m$.
	 Then $x$ is null if and only if all  $z\in \Ker_k(x)$, $z\neq x$ are null. 
\end{lemma}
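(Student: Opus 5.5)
The plan is to prove the two implications separately. Only the implication from the kernel to $x$ needs the hypothesis on $k^m$-$\AP$s.

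\emph{Step 1: if $x$ is null, so is every kernel element.} Let $x$ be $t$-null and let $z=(x_{i+k^mn})_n\in\Ker_k(x)$. For any finite $G\subset\Z$ and any $j\in\Z$ we have
\[
(T^jz)|_G=(x_{i+k^mj+k^mg})_{g\in G},
\]
which is a word of $\L(x,k^mG)$. Hence $\L(z,G)\subseteq\L(x,k^mG)$, and $k^mG$ has the same size as $G$. So $z$ is $t$-null as well. This direction does not use the hypothesis.

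\emph{Step 2: reduce the converse to a single residue class.} Assume every $z\in\Ker_k(x)$ with $z\neq x$ is null. Since $\Ker_k(x)$ is finite by Cobham's theorem, there is one $t$ such that all of them are $t$-null. Suppose $\{a,b\}^T\subseteq\L(x,G)$ with $|G|=T$ large. By pigeonhole, some residue class $G_c=\{g\in G\mid g\equiv c \bmod k\}$ has $|G_c|\geq T/k$, and $\{a,b\}^{G_c}\subseteq\L(x,G_c)$. A shift $T^jx$ restricted to $G_c$ only reads the subsequence $y_r=(x_{r+kn})_n\in\AP_k(x)$ with $r\equiv j+c \bmod k$, at positions of $G'=(G_c-c)/k$ (up to a translation). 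Therefore
\[
\L(x,G_c)=\bigcup_{0\leq r<k}\L(y_r,G').
\]

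\emph{Step 3: extract a large cube from one piece.} The cube $\{a,b\}^{G'}$ is now covered by $k$ sets. I will use the standard combinatorial fact on independence sets of unions (as in \cite{KerrLi2007}): if $\{0,1\}^s$ is covered by $k$ sets, then one of them contains a full cube $\{0,1\}^{S}$ on some $S$ with $|S|\geq \gamma_k s$, where $\gamma_k>0$ depends only on $k$. So some $y_r$ admits a cube of size about $\gamma_kT/k$. If $y_r\neq x$, this contradicts $t$-nullness once $T$ is large. By hypothesis with $m=1$, at most one $r_0$ has $y_{r_0}=x$, so the remaining case is the one in which this distinguished branch carries the cube.

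\emph{Step 4: the bad branch (main obstacle).} In the remaining case we are back to $x$, on a set $G''\subseteq G'$ whose diameter is about $\operatorname{diam}(G)/k$ and whose size is about $\gamma_kT/k$. Iterating naively loses a constant factor in size at every step, so it does not terminate by itself. To fix this, I would run the argument at level $k^m$ directly: split $G$ into residue classes mod $k^m$, and cover by the $k^m$ sequences of $\AP_{k^m}(x)$, using \eqref{eq:APofkernelelements}. The hypothesis says exactly one of these sequences equals $x$. The key point to establish is that the bad branch cannot absorb the whole cube. The words produced by the single residue $r_0$ are all obtained by shifts $j\equiv r_0-c \bmod k^m$, so their number is at most $|\L(x,G'')|$. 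Meanwhile $G''$ has diameter at most about $\operatorname{diam}(G)/k^m$, so for $m$ of order $\log_k\operatorname{diam}(G)$ this is a bounded set whose number of words is bounded by the word complexity of $x$. That bound is far smaller than $2^{|G_c|}$. Hence most of the cube is covered by the null kernel elements, and I would apply the union lemma to them alone to get a contradiction. Making this quantitative is the hardest step: in particular, keeping $|G_c|\geq T/k^m$ large while taking $m$ large. If this fails, the fallback is to choose $G$ of minimal diameter among shattered sets of size $T$ and to get a contradiction from the diameter reduction in the bad branch.
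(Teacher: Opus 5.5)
Your Steps 1--3 are correct. Step 1 is the paper's easy direction, and the union lemma in Step 3 even holds with $|S|\geq\lfloor s/k\rfloor$: split $G'$ into $k$ blocks and concatenate non-realised words. The gap is Step 4, which carries the whole content of the lemma, and the mechanism you propose there cannot work. Bounding the words of the distinguished branch on $G_c$ by the word complexity of $x$ only helps if $\operatorname{diam}(G_c)/k^m$ is small compared with $2^{|G_c|}$. If $k^m>\operatorname{diam}(G)$, every residue class mod $k^m$ meets $G$ in at most one point, so $|G_c|\leq 1$. In a proof by contradiction you cannot choose the shape of the shattered set, so you must also handle lacunary sets such as $G=\{k^{2^i}\mid 1\leq i\leq T\}$. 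For this set, at every scale $m$, each residue class containing two points of $G$ has $\operatorname{diam}(G_c)/k^m\geq k^{2^{T-1}}-1$. That is far beyond $2^{|G_c|}\leq 2^T$, so the complexity count gives nothing at any scale.

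The fallback fails too. The bad branch yields a shattered set for $x$ of smaller diameter but also strictly smaller size (about $\gamma_kT/k$), which does not contradict minimality of diameter among shattered sets of size $T$. The underlying problem is that you keep only one residue class of $G$. On a single class the distinguished branch contributes exactly $\L(x,G'')$, which is the original problem on a rescaled set, so this route is circular.

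The missing idea is to keep a residue class \emph{together with its complement}, at a scale adapted to $G$. Lemma~\ref{lem:combinatorics} (take $m$ minimal such that $G$ meets more than $s$ classes mod $k^m$) gives $F_1\subseteq G\cap(t+k^m\Z)$ and $F_2\subseteq G\setminus(t+k^m\Z)$, both of size at least $s$. Let $r_0$ be the unique residue mod $k^m$ whose progression is $x$, and let $Z=\Ker_k(x)\setminus\{x\}$.

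\begin{itemize}
\item A shift $n$ with $n+t\not\equiv r_0$ reads all of $F_1$ from a single element of $Z$.
\item A shift with $n+t\equiv r_0$ reads no point of $F_2$ from the $r_0$-branch, so $F_2$ is read from elements of $Z$. Pigeonholing over $Z$ reduces this to one $z\in Z$ on $s'$ points. Here note that two points landing in the same block of length $k^m$ always read the same letter.
\end{itemize}

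Take $s=(|Z|+1)s'$, where $s'$ is chosen so that $\bigcup_{z\in Z}\L(z,F)$ never contains $\{a,b\}^F$ for $|F|=s'$ (the sum of the nullness parameters works). You then get $w_1\in\{a,b\}^{F_1}$ not realised by shifts of the first kind and $w_2\in\{a,b\}^{F_2}$ not realised by shifts of the second kind. The word on $F_1\cup F_2$ combining them is realised by no shift at all. This disposes of the distinguished branch in one step, with no recursion or counting. The $G$-dependent choice of $m$ is also why the hypothesis is needed for every $m$, not only for $m=1$ as in your Step 3.
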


	Before proving Lemma \ref{prop:kernelandnullness}, we consider some  examples which will explain how it will be used in the proof Theorem \ref{introthm:tameness}.
		Recall that for a sequence $x$ and $t\geq 1$, we use the notation 
\[\AP_t(x)=\{(x_{i+tn})_n \mid 0\leq i<t\} \quad\text{and}\quad \Ker_k(x) = \bigcup \{\AP_{k^n}(x)\mid n\in\N\}.
\] 
	The latter set is called the $k$-kernel of $x$; $x$ is $k$-automatic if and only if its $k$-kernel is finite. 
	By a slight abuse of terminology we will say that $(x_{i+tn})_n $ is the $i$th  $t$-\emph{AP} of $x$.

	\begin{example}\label{ex:nullness1}
	Consider the substitution 
	\[
	\varphi(0) = 0012,\quad
	\varphi(1) = 1012,\quad 
	\varphi(2) = 2012
	\]
	of constant length $4$. 
	Note that $\varphi$ is of height 1, has a coincidence, and so, $X_{\varphi}$ has discrete spectrum.
		Let $x =\dots2012.0012\dots$ be a fixed point of $\varphi$.
	The discrepancy substitution of $\varphi$ is a constant function $\varphi_s\colon \A_s\to\A_s$.

	We have $x_n = x_{4n}$, $n\in\Z$; and the $4$-kernel of $x$
	\[\Ker_{4}(x) = \{x, 0^{\omega}, 1^{\omega}, 2^{\omega}\}
	\]
	consists only of $x$ and constant sequences.
	Further, for any $n\geq 1$, $x$ appears only once as $4^n$-$\AP$ in $x$ (namely, as the 0th progression) and all other $4^n$-$\AP$s are constant. 
	Since all constant sequences are  null, by Lemma \ref{prop:kernelandnullness}, the $4$-automatic sequence $x$ is also null.
	\end{example}

Example \ref{ex:nullness1} above suggests that a constant length substitution with only one column set of size $>1$ always gives rise to a null system. Substitution systems with this property have, in fact, already been studied and shown to be null, for instance in the original paper of Goodman \cite[Prop.~6.2]{Goodman}, where the authors restrict to the binary case. Moreover, Huang, Li, Shao, and Ye observed that such a substitution system $X_{\varphi}$ is always an asymptotic extension of its MEF \cite[Lem.~5.1]{HuangLiShaoYe2003}: any $x,y\in X_{\varphi}$ that map to the same point in the MEF under a factor map must be asymptotic.\footnote{Recall that $x$ and $y$ are \emph{asymptotic} if for every $\varepsilon>0$, one has $d(T^n x, T^n y)<\varepsilon$ for all but finitely many $n\in\mathbb{Z}$.}
However, Example \ref{ex:nullness3} below shows that constant length substitution systems can be null while exhibiting a more complicated structure; in particular, they need not be asymptotic extensions of their MEFs.

	 \begin{example}\label{ex:nullness3}
	Consider the substitution 
	\[
	\varphi(0) = 00012, \quad
	\varphi(1) = 12012,  \quad
	\varphi(2) = 20012
	\]
	of constant length $5$. 
	 	Let $x$ be a fixed point of $\varphi$ given as
	 	\[
	 	x=\dots 20012|00012|00012|12012|20012.12012|20012|00012|12012|20012\dots
	 	\]
	 	Note that $\varphi$ is of height 1, $X_{\varphi}$ has discrete spectrum, and the discrepancy substitution of $\varphi$ is given by
	 	\[
	\varphi_s{0\choose 1} = {0\choose 1}{0\choose 2}, \quad 
	\varphi_s{0\choose 2} = {0\choose 2}, \quad
	\varphi_s{1\choose 2} = {1\choose 2}{0\choose 2}.
	\]
	 Here, we have 
	 \[(x_{5n})_{n\in\Z}=x,\quad (x_{2+5n})_{n\in\Z}=0^{\omega},\quad (x_{3+5n})_{n\in\Z}=1^{\omega},\quad (x_{4+5n})_{n\in\Z} = 2^{\omega}.\]
	 Denoting $y=(x_{1+5n})_{n\in\Z}$, we see that the $5$-kernel of $x$ is given by
	 \[\Ker_5{x} =\{x, y, 0^{\omega}, 1^{\omega}, 2^{\omega}\}
	 \]
	  The sequence 
	 \[ y = \dots 00002|00002|00020|00022|00020.20020|00020|00020|20020|00020\dots
	 \]
	 is again $5$-automatic; it has the property that $y_{5n}=y_{n}$, $n\in\Z$, and  all other arithmetic progressions of $y$ of difference $5$ are constant.
	Hence, by Lemma \ref{prop:kernelandnullness}, $y$ is null.
	Now since $y$ is null and constant sequences are null, by another application of Lemma \ref{prop:kernelandnullness}, $x$ is also null.
	One can calculate that the number of nonconstant $5^n$-$\AP$s of $x$ is given by $2n+1$ for $n\in\N$.
	
	To see that $X_{\varphi}$ is not an asymptotic extension of its MEF $\Z_5$, let $x$ and $y$ be the fixed point of $\varphi$ given by
	\begin{align*}
	x &= \dots 2.1\varphi(2012)\varphi^2(2012)\varphi^3(2012)\dots \\
	y &= \dots1.2\varphi(0012)\varphi^2(0012)\varphi^3(0012)\dots
	\end{align*}
	Clearly, $x$ and $y$ have the same image in $\Z_5$ under the factor map; however, $x$ and $y$ are not asymptotic: $\varphi$ is of constant length and $\varphi^n(0)\neq \varphi^n(2)$ for all $n\geq 1$.
	\end{example}
	
	In Example \ref{ex:nullness3} the growth of the number of nonconstant $k^n$-$\AP$s  of $x$ is not constant, but it is still small---it is polynomial.
	This growth is controlled by the maximal growth type of the discrepancy substitution and there is a gap between possible growths of these quantities: they are either polynomial---as in the examples above---or exponential (see Proposition \ref{prop:simplekernelstructure} below).
	Not surprisingly, our sequences will turn out to be null exactly when the first alternative happens.
	This motivates the following definition.
	
	\begin{definition}\label{def:simplekernelstructure}
	For a sequence $x$ and integer $k\geq 2$ we define
	\[\d_m(k,x) = \abs{\{0\leq i< k^m\mid (x_{i+nk^m})_n \text{ is nonconstant}\}},\ m\in\N.
	\] 
	We say that a $k$-automatic sequence $x$ of height $1$ has  \textit{simple} $k$-\emph{kernel structure} if $(\d_m(k,x))_m$ has subpolynomial growth, i.e.\ $\d_m(k,x) = O(m^d)$ for some $d\geq 0$. 
	\end{definition}
	
	\begin{remark}\label{rem:simplekernelstructure}
	Note that $\d_m(k,x)\leq \d_{m+1}(k,x)$ for all $m\in \N$ and thus, for any $n\geq 1$, $x$ has simple $k$-kernel structure if and only if it has simple $k^n$-kernel structure.
	\end{remark}

	Recall that for a substitution $\varphi\colon\A\to\A^*$ of constant length $k$, 
	\[\C(\varphi)=\{\varphi^m_j(\A)\mid 0\leq j<k^m,\ m\in\N\}
	\]
	denotes the union of all column sets of $\varphi^m$,  $m\in \N$ (see Section \ref{sec:substitution} for more details).

	\begin{proposition}\label{prop:simplekernelstructure}
	Let $\varphi\colon\A\to\A^*$ be a primitive substitution of length $k\geq 2$ and height 1. Let $x$ be a fixed point of $\varphi$.
	We have 
	\begin{equation}\label{eq:growthdm}
	\d_m(k,x) = \bigT (m^{d_{s}}\lambda_s^m),
	\end{equation}	
	where $(\lambda_s,d_{s})$ is the discrepancy type of $\varphi$.
	Furthermore, assuming  $x$ is not periodic, the following are equivalent:
	\begin{enumerate}
	\item\label{prop:simplekernelstructure1} $x$ has simple $k$-kernel structure,
	\item\label{prop:simplekernelstructure2} $\lambda_s=1$,	
	\item\label{prop:simplekernelstructure3} for each nonconstant $y\in \Ker_k(x)$ and $m\geq 1$, $y$ appears at most once in $y$ as $k^m$-$\AP$,
	\item\label{prop:simplekernelstructure4} for each $\mathcal{A}'\in \C(\varphi)$ with $\abs{\A'}\geq 2$ and each $m\geq 1$ there exists at most one $0\leq i<k^m$ such that $\varphi^m_i(\mathcal{A}')=\mathcal{A}'$.
\end{enumerate}		
	\end{proposition}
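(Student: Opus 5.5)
The plan is to translate everything into statements about the column maps $\varphi^m_i\colon\A\to\A$. Since $x$ is a fixed point, $x_{i+nk^m}=\varphi^m(x_n)_i=\varphi^m_i(x_n)$, so the $i$th $k^m$-AP of $x$ is $\varphi^m_i(x)$. By primitivity every letter of $\A$ occurs in $x$. Hence this AP is constant iff the column set $\varphi^m_i(\A)$ is a singleton, and two sequences $g(x),h(x)$ with $g,h\colon\A\to\A$ coincide iff $g=h$ on $\A$. Consequently $\d_m(k,x)$ is the number of columns $0\le i<k^m$ in which some pair of distinct letters $a\ne b$ has $\varphi^m(a)_i\ne\varphi^m(b)_i$. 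Since the height is $1$, we have $\varphi'=\varphi$, and Lemma \ref{lem:substitution_counts_differences} gives
\[
\d_m(k,x)\le \sum_{\alpha\in\A_s}\abs{\varphi_s^m(\alpha)}\le \abs{\A_s}\,\d_m(k,x).
\]
The middle sum is the total entry sum of $M_{\varphi_s}^m$. This is comparable, up to a factor $\abs{\A_s}$, to $\|M_{\varphi_s}^m\|_1$, which is $\bigT(m^{d_s}\lambda_s^m)$ by Lemma \ref{lem:growth_of_letters}. This proves \eqref{eq:growthdm}.

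For the equivalences, first note that $x$ non-periodic means $X_\varphi$ is infinite, so $\lambda_s\ge1$ by Proposition \ref{prop:sepnumberproperties}. Then \eqref{eq:growthdm} immediately gives (i)$\Leftrightarrow$(ii). A nonconstant $y\in\Ker_k(x)$ has the form $y=\varphi^n_j(x)$ with $\A'=\varphi^n_j(\A)$ of size $\ge2$. By \eqref{eq:APofkernelelements}, its $k^m$-APs are $\varphi^m_i\varphi^n_j(x)$, and such an AP equals $y$ iff $\varphi^m_i\circ\varphi^n_j=\varphi^n_j$ on $\A$. This in particular forces $\varphi^m_i(\A')=\A'$, so (iv)$\Rightarrow$(iii). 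For (ii)$\Rightarrow$(iv), suppose $\varphi^m_{i_1}(\A')=\varphi^m_{i_2}(\A')=\A'$ with $i_1\ne i_2$ and $\abs{\A'}\ge2$. Composing such maps along all $2^r$ words in $\{i_1,i_2\}^r$ yields $2^r$ distinct columns of $\varphi^{mr}$ that map $\A'$ onto $\A'$. The column set $\A'=\varphi^n_j(\A)$ then produces at least $2^r$ non-singleton columns of $\varphi^{n+mr}$. Hence $\d_{n+mr}\ge2^r$ and $\lambda_s\ge2^{1/m}>1$.

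The main obstacle is (iii)$\Rightarrow$(ii), which I would prove by contraposition: assume $\lambda_s>1$. Some primitive component of $M_{\varphi_s}$ then has Perron eigenvalue $>1$, so for a letter $\alpha=\{a,b\}$ in it and some $m$, the pair $\alpha$ occurs at least twice in $\varphi_s^m(\alpha)$. So there are $i_1\ne i_2$ with $f_\ell:=\varphi^m_{i_\ell}$ mapping $\{a,b\}$ onto $\{a,b\}$. Let $S$ be the finite transformation semigroup generated by $f_1,f_2$; every element of $S$ is a column map of some power of $\varphi^m$ and has image containing $\{a,b\}$. I would choose $u\in S$ of minimal rank and replace it by an idempotent power, say of word length $L$, and set $\A'=u(\A)$. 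Then $\abs{\A'}\ge2$, $\A'$ is a column set, and $u$ is the identity on $\A'$. Next take a word $v\ne u$ of the same length $L$. By minimality of rank, $uvu(\A)=\A'$, so $uvu$ permutes $\A'$, and some power $(uvu)^p$ is the identity on $\A'$. Then $(uvu)^p\circ u=u$ and $u^{3p}\circ u=u$, where $(uvu)^p$ and $u^{3p}$ are distinct words of the same length $3pL$, i.e.\ two distinct columns of $\varphi^{3pLm}$. Hence the nonconstant kernel element $y=u(x)$ appears twice as a $k^{3pLm}$-AP of itself, contradicting (iii). The delicate points are the minimal-rank/idempotent argument and checking that distinct words of equal length give distinct column indices; the latter holds because positions are read off in base $k^m$.
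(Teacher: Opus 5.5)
\textbf{Verdict.} Most of your argument is sound: the growth formula, (i)$\Leftrightarrow$(ii), the implication (ii)$\Rightarrow$(iv) via $2^r$ non-singleton columns, and the min-rank/idempotent argument for (iii)$\Rightarrow$(ii). The last two replace the paper's route. The paper proves (i)$\Leftrightarrow$(iii) through the matrix $A=[a_{yz}]$, which records how often one nonconstant kernel element occurs as a $k$-AP of another, and then identifies its spectral radius with $\lambda_s$ by computing $\d_m(k,x)$ in two ways. The gap is your step (iv)$\Rightarrow$(iii).

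\textbf{The gap.} That step rests on \eqref{eq:APofkernelelements}, which has the composition order reversed. For $y=\varphi^n_j(x)$ we have $y_{i+lk^m}=\varphi^n_j(x_{i+lk^m})=\varphi^n_j(\varphi^m_i(x_l))$. So the $i$th $k^m$-AP of $y$ is $\varphi^n_j\circ\varphi^m_i(x)=\varphi^{n+m}_{j+ik^n}(x)$, not $\varphi^m_i\circ\varphi^n_j(x)$.

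These differ as soon as column maps do not commute. Take $a\mapsto abaa$, $b\mapsto bacb$, $c\mapsto ccbc$; this is primitive, of height $1$, and has the two-sided fixed point $x=\varphi^\infty(a).\varphi^\infty(a)$. For $y=\varphi_1(x)$, the sequence $\varphi_2(y)=\varphi_2\circ\varphi_1(x)$ is not a $4$-AP of $y$. Those APs are $\varphi_1\circ\varphi_i(x)$ for $0\leq i<4$, and $\varphi_2\circ\varphi_1$ is the inverse of the $3$-cycle $\varphi_1\circ\varphi_2$.

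Consequently, ``$y$ is its own $i$th $k^m$-AP'' means $\varphi^n_j\circ\varphi^m_i=\varphi^n_j$. This only says that $\varphi^m_i$ respects the fibres of $\varphi^n_j$. It does not force $\varphi^m_i(\A')=\A'$, so your one-line deduction fails.

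The paper's own argument for (iii)$\Leftrightarrow$(iv) relies on the same identity, so you inherited the problem. Its (i)$\Leftrightarrow$(iii) argument via $A$ is unaffected.

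Your (iii)$\Rightarrow$(ii) step is also written with the reversed order, but it survives. Since $(uvu)^p$ ends in $u$ and fixes $u(\A)$ pointwise, we get $(uvu)^p=u=u^{3p}$ as maps on all of $\A$. Hence also $u\circ(uvu)^p=u=u\circ u^{3p}$.

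\textbf{The repair.} It needs only your own tools.

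For $\neg$(iii)$\Rightarrow\neg$(ii): suppose $g=\varphi^n_j$ has rank at least $2$, and $g\circ h_1=g\circ h_2=g$ where $h_\ell=\varphi^m_{i_\ell}$ with $i_1\neq i_2$. Then $g\circ h_w=g$ for every word $w\in\{1,2\}^r$, where $h_w=h_{w_1}\circ\cdots\circ h_{w_r}$. The $h_w$ are columns of $\varphi^{mr}$ with pairwise distinct indices. So the $g\circ h_w$ are $2^r$ distinct columns of $\varphi^{n+mr}$, each with image $g(\A)$ of size at least $2$. This gives $\d_{n+mr}(k,x)\geq 2^r$, hence $\lambda_s>1$ by \eqref{eq:growthdm}.

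For $\neg$(ii)$\Rightarrow\neg$(iv): your min-rank construction already produces two distinct columns $(uvu)^p$ and $u^{3p}$ of $\varphi^{3pLm}$. Both act as the identity on the column set $u(\A)$, and $\abs{u(\A)}\geq 2$.

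Together with (i)$\Leftrightarrow$(ii) and your (ii)$\Rightarrow$(iv), this closes all the equivalences without using \eqref{eq:APofkernelelements}.
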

	\begin{proof}

	First we will show  that \eqref{prop:simplekernelstructure1} and \eqref{prop:simplekernelstructure3} are equivalent.
	For ease of notation we will write $\d_m(x)$ instead of $\d_m(k,x)$ (with the convention that $k$ is changed to $k^n$ whenever we change $\varphi$ to its power $\varphi^n$).
	
	Let $\NK_k(x)$ denote the nonconstant elements of  $\Ker_k(x)$ and let  $t=\abs{\NK_k(x)}$; note that $x\in \NK_k(x)$.
	For each $y,z\in \NK_k(x)$, let $a_{yz}$ be the number of times $y$ appears in $z$ as $k$-AP.
	 Let $A=[a_{yz}]$ be a $t\times t$ integer matrix indexed by $\NK_k(x)$.
	 Write $A^m=[a^{(m)}_{yz}]$ for $m\in\N$, and let $\lambda$ be the dominant eigenvalue of $A$.
	Since for any constant sequence, its $k$-APs are also constant, $a^{(m)}_{yz}$ is the number of times $y$ appears in $z$ as $k^m$-AP.
	Furthermore, the number of nonconstant $k^m$-$\AP$s of $x$ is given by the sum of the elements of the column of $A^m$ at index $x$. That is, 
	\begin{equation}\label{eq:dmgrowthanother}\d_m(x) = \sum_{z\in \NK_k(x)} a^{(m)}_{zx}= \bigT(||A^m||_1) =\bigT(\lambda^m m^d),
	\end{equation}
for some $d\in\N$,	where we use Lemma \ref{lem:growth_of_letters}\eqref{lem:growth_of_letters1} for the last equality. 
	Since  the dominant eigenvalue $\lambda$ of an integer matrix $A$ is $>1$ if and only if some of its power $A^m$ has at least one integer $\geq 2$ on the diagonal (i.e.\ when $a^{(m)}_{yy}\geq 2$ for some $y\in \NK_k(x)$), this shows the  equivalence.
			
	To see Formula \eqref{eq:growthdm} and the equivalences between \eqref{prop:simplekernelstructure1}, \eqref{prop:simplekernelstructure2}, and \eqref{prop:simplekernelstructure3}, we will calculate $\d_m(x)$ is another way.
	 By Remark \ref{rem:simplekernelstructure} and \eqref{eq:dmgrowthanother}, we may change $\varphi$ to some of its power. 
	 Hence, we may assume without loss of generality that the discrepancy substitution $\varphi_s$ is in normal primitive form.
	
	For each distinct pair of letters $\alpha=\{a,b\}$, let $(\lambda_{\alpha}, d_{\alpha})$ be its growth type; in particular for each distinct pair of letters $\alpha=\{a,b\}$ and all $m$ big enough we have
	\begin{equation}\label{eq:2growth}
	1/2  c_{\alpha}\lambda_{\alpha}^mm^{d_{\alpha}}\leq \abs{\varphi_s^{m}(\alpha)} \leq 2 c_{\alpha}\lambda_{\alpha}^mm^{d_{\alpha}}
	\end{equation}
	for some $c_{\alpha}>0$.
	Let $\alpha'$ be any pair of letters of maximal growth type $(\lambda_{\alpha'}, d_{\alpha'}) = (\lambda_s, d_{s})$. 
	Note that for each $m\in \N$,  we have
	\[\d_m(x) =\abs{\{0\leq j<k^m\mid \abs{\varphi^m_j(\A)}\geq 2  \}}.
	\]
	Hence, by \eqref{eq:2growth}, for all $m$ big enough,
	\[1/2 c_{\alpha'}\lambda_{\alpha'}^mm^{d_{\alpha'}} \leq \abs{\varphi_s^{m}(\alpha')}  \leq \d_m(x) \leq \sum_{\alpha} \abs{\varphi_s^{m}(\alpha)} \leq \sum_{\alpha} 2c_{\alpha}\lambda_{\alpha}^mm^{d_{\alpha}}.
	\]
	Hence, $\d_m(x) = \bigT (m^{d_{s}}\lambda_s^m)$, which shows Formula \eqref{eq:growthdm}.
			Thus, in Formula \eqref{eq:dmgrowthanother}, we have, in fact, $\lambda = \lambda_s$ and $d=d_{s}$, and  the equivalences  between \eqref{prop:simplekernelstructure1}, \eqref{prop:simplekernelstructure2}, and \eqref{prop:simplekernelstructure3} follow.
	 
	Finally, we  show  that \eqref{prop:simplekernelstructure3} and 
    \eqref{prop:simplekernelstructure4} are equivalent.
        First note that $\A'$ lies in $C(\varphi)$ (i.e.\ $A'$ is a column set of $\varphi^n$ for some $n\geq 1$) if and only if there is $y\in \Ker_k(x)$ which has symbols only from $\A'$; this $y$ is nonconstant if and only if $\abs{A'}\geq 2$. 
	 Furthermore, $y$ appears as $j$th $k^m$-AP of itself if and only if $\varphi^{m}_{j}|_{\A'}\colon \A'\to \A'$ is the identity map. 
     This immediately shows the implication from \eqref{prop:simplekernelstructure4} to \eqref{prop:simplekernelstructure3}. 
     For the converse implication it is enough to note that if \eqref{prop:simplekernelstructure4} does not hold for some $|\A'|\geq 2$, then by taking $\varphi$ to some further power $\varphi^m$ if necessary, we can ensure that, for some distinct $0\leq i',j'<k^{m}$ the maps
\[\varphi^{m}_{i'}|_{\A'}\colon \A'\to \A'\quad \text{and}\quad \varphi^{m}_{j'}|_{\A'}\colon \A'\to \A'
\]	
	 are both identities. 
		\end{proof}

	\begin{remark}\label{rem:graph}
	 In \cite{CovenQuasYassawi}, the authors associate with each primitive substitution $\varphi\colon\A\to\A^*$ of  length $k\geq 2$ a certain directed graph $\mathcal G_\varphi$. 
	If $\varphi$ has height 1, the graph  $\mathcal G_\varphi$ is defined as follows: its vertices are given by $\C(\varphi)$ and there is an oriented edge from $\mathcal{B}$ to $\mathcal{C}$ if $\varphi_j(\mathcal{B})=\mathcal{C}$ for some $0\leq j<k$. If $\varphi$ does not have height 1, then $\mathcal G_\varphi$ is defined as the graph of its pure base.
	It is straightforward to note that Condition \eqref{prop:simplekernelstructure4} in Proposition \ref{prop:simplekernelstructure} corresponds to the fact that    $\mathcal G_{\varphi}$ does not contain two distinct cycles which share a common vertex. 
	In \cite[Thm.\ 2.2]{FuhrmannKellendonkYassawi2024} it is shown that this condition precisely characterises  systems $X_{\varphi}$ that are  tame. 
	\end{remark}

	We will now show Lemma \ref{prop:kernelandnullness}. We will need the following observation.
\begin{lemma}\label{lem:combinatorics}
	Let $k\geq 2$ and $s\geq 0$.
	 There exists $S\in \N$ such that for any set $F\subset \Z$ of size at least $S$ there are $m\geq 1$ and a residue $0\leq t<k^m$  such that:\begin{enumerate}
\item\label{lem:combinatorics1} $\abs{F\cap (t+k^m\Z)}\geq s$,
\item \label{lem:combinatorics2} $\abs{F\cap  (\Z\setminus (t+k^m\Z))}\geq s$.
\end{enumerate} 
\end{lemma}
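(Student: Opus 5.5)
We argue by a pigeonhole argument on the $k$-adic digits of the elements of $F$, and take $S = k^{2s}$ (any sufficiently large bound in terms of $k$ and $s$ will do).

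Take $F$ with $\abs{F}\geq S$. For each $m\geq 0$ the residue classes modulo $k^m$ split $F$ into pieces, and these partitions are nested: each class modulo $k^m$ splits into $k$ classes modulo $k^{m+1}$. At $m=0$ all of $F$ lies in one class. Since $F$ is finite, for $m$ large (larger than the $k$-adic valuation of every nonzero difference of elements of $F$) all elements of $F$ lie in distinct classes. We follow a descending chain of classes $t_0+k^0\Z\supset t_1+k^1\Z\supset\cdots$, choosing at each step a child class containing the largest number of elements of $F$, and set $n_m=\abs{F\cap(t_m+k^m\Z)}$. Then $n_0=\abs{F}$, the sequence $n_m$ is nonincreasing, $n_{m+1}\geq n_m/k$ by pigeonhole, and $n_m=1$ eventually.

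Let $m\geq 1$ be the first index with $n_m< \abs{F}-s+1$, i.e.\ $\abs{F\setminus(t_m+k^m\Z)}\geq s$. Such an $m$ exists because eventually $n_m=1\leq \abs{F}-s$ once $\abs{F}\geq s+1$. By minimality, $n_{m-1}\geq \abs{F}-s+1$ (for $m=1$ this is just $n_0=\abs{F}$). Hence $n_m\geq n_{m-1}/k\geq (\abs{F}-s+1)/k$, and this is at least $s$ as soon as $\abs{F}\geq ks+s$. Taking $t=t_m$, both conditions \eqref{lem:combinatorics1} and \eqref{lem:combinatorics2} hold, so $S=(k+1)s$ (or anything larger, such as $k^{2s}$) suffices.

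The only delicate point is the degenerate case $s=0$, which is trivial. We also need $m\geq 1$, which holds because $n_0=\abs{F}$ never satisfies the strict inequality. There is no real obstacle beyond checking the pigeonhole step $n_{m+1}\geq n_m/k$.
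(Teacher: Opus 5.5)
Your proof is correct, but it uses a different stopping rule from the paper's.

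The paper takes $S=ks^2$ (after assuming $s\geq k$) and chooses the least $m$ for which $F$ meets at least $s+1$ residue classes modulo $k^m$. Then (ii) comes for free from the $s$ other occupied classes. Condition (i) needs two pigeonhole steps: first over the at most $s$ occupied classes modulo $k^{m-1}$, then over the $k$ children of the heaviest one. This loses a factor $ks$.

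You instead track a single greedy chain of heaviest classes and stop at the first level where it has shed at least $s$ elements of $F$. Now (ii) is built into the stopping rule, and (i) loses only the factor $k$ of the last splitting. So you get the linear bound $S=(k+1)s$ rather than a quadratic one, and you do not need the assumption $s\geq k$. For the application in Lemma~\ref{prop:kernelandnullness} any $S$ depending only on $k$ and $s$ suffices, so the gain is cosmetic there.

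Two minor presentational points. Your opening choice $S=k^{2s}$ is superseded by the bound $(k+1)s$ you actually derive. Also, the claim that $n_m=1$ eventually uses that $F$ is finite. If $F$ is infinite, first pass to a subset of size $S$; this is harmless since both conditions are monotone in $F$.
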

\begin{proof}

	Let \( S = k s^2 \). Let \( F \subset \mathbf{Z} \) be any set with \( |F| \geq S \). Without loss of generality we may assume that $s\geq k$.

Choose \( m \) to be the smallest integer such that $F$ has nonempty intersection with at least \( s+1 \) residue classes modulo \( k^m \).
	 Note that $m\geq 2$.
	 Since \( m \) is minimal, this implies that $F$ has nonempty intersection with at most \( s \) residue classes modulo \( k^{m-1} \).
	  Hence, there exists a residue  $0\leq t' < k^{m-1}$ such that
\[
\abs{F'} \geq |F|/{s} \geq k s,
\]
where $F' = F\cap (t'+k^{m-1}\Z)$.
	Since the set \( F' \) is partitioned into at most \( k \) different residue classes modulo \( k^m \), there exists $t\in \{t'+ik^{m-1}\mid 0\leq i<k\}$ such that:
\[
\abs{F\cap (t+k^{m}\Z)}  \geq \abs{F'}/k \geq \frac{k s}{k} = s
\]
and \eqref{lem:combinatorics1} holds.
	By our choice of \( m \),  there are at least \( s+1 \) nonempty residue classes modulo \( k^m \) in $F$; in particular, $s$ of them are different than $t$ and so \eqref{lem:combinatorics2} also holds.
\end{proof}

\begin{proof}[Proof of Lemma \ref{prop:kernelandnullness}]

Let $x\in \A^{\Z}$ be $k$-automatic. Clearly, if $x$ is null, then all subsequences of $x$ are null. In particular all sequences in $\Ker_k(x)$ are null.

To prove the other direction, assume that for each $m\geq 1$ there exists at most one $0\leq j<k^m$ with
\begin{equation}\label{eq:degeneretekernel}
x_n=x_{j + k^mn}, \ n\in\Z,
\end{equation}
and all other $z\in \Ker_k(x)$, $z\neq x$ are null. 
	We need to show that $x$ itself is null.

	  Let $Z=\Ker_k(x)\setminus \{x\}$; by assumption $Z$ is a finite set  of null sequences. 
	  Let $s'\in\N$ be such that for all $F\subset \Z$ of size $s'$ we have that
	  \begin{equation}\label{eq:parameter s'}
	  \{a,b\}^{s'}\nsubseteq\bigcup \{\L(z,F)\mid z\in Z\}
	  \quad \text{for all distinct}\quad a,b\in\A.
	  \end{equation} 
	  (It is enough to take $s'$ equal to the sum of nullness parameters of $z\in Z$.)
	  Observe that if for some $m\geq 1$ there is no $0\leq j< k^m$ such that \eqref{eq:degeneretekernel} holds, then the set of $k^m$-$\AP$s of $x$ is contained in $Z$. 
	 In this case, it it easy to see that $x$ is $((k+1)s')$-null: in any $F\subset \Z$ of size at least $(k+1)s'$ one can find a subset of $F'\subset F$ of size $s'$ lying in the same residue class modulo $k$ and use \eqref{eq:parameter s'}.
	 Hence, for the rest of the proof we may assume that for each $m\geq 1$ there is $j$ such that \eqref{eq:degeneretekernel} holds.

	   Put $s = (\abs{Z}+1)s'$
	  and let $S\in\N$ be given by  Lemma \ref{lem:combinatorics} applied to $k$ and $s$. 
	  We will show that $x$ is $S$-null. 
	  For this let $F = \{a_1<\dots <a_S\}\subset \Z$ be any set of size $S$.
	   By Lemma \ref{lem:combinatorics}, there exist $m$ and two sets $F_1,F_2\subset F$   of size $s$ such that:
	  \begin{enumerate}
\item all elements of $F_1$ have the same residue $t$ modulo $k^m$, and
\item all elements of $F_2$ are different than $t$ modulo $k^m$.
\end{enumerate} 
	Since we are ultimately interested in the set 
	\[\L(x,F)=\{w\in\A^S\mid x_{i+a_1}\dots x_{i+a_S} = w \text{ for some } i\in\Z\} = \{T^i(x)|_{F}\mid i\in\Z\},
	\] we may assume without loss of generality that $t=0$. 
	Furthermore, we can also assume that $j=0$ for $m=1$ in  \eqref{eq:degeneretekernel}, and consequently,   
	 $j=0$ for all $m\geq 1$ in  \eqref{eq:degeneretekernel}.
	For $N\subset\Z$ and $H\subset \Z$ we will use notation 
	\[\L(x,H,N) = \{T^i(x)|_{H}\mid i\in N\}.
	\]

 	Let $a,b\in\A$ be two distinct letters. To show that
\[\{a,b\}^S\nsubseteq \L(x,F),
\] 	
 	 it is enough to show that 
 	\begin{enumerate}[(a)]
 	\item\label{nulcon1} $ \{a,b\}^s \nsubseteq \L(x,F_1, \Z\setminus k^m\Z)$, and
 	\item\label{nulcon2} $ \{a,b\}^s \nsubseteq \L(x,F_2, k^m\Z).$
 	\end{enumerate} 
 	To see that this is indeed enough, let $w_1$ (resp.\ $w_2$) be a word in $\{a,b\}^s$ that does not lie in $\L(x,F_1, \Z\setminus k^m\Z)$ (resp.\  $\L(x,F_2, k^m\Z)$). 
 	Suppose that $\{a,b\}^S \subset \L(x,F)$.
	Then there is $n\in \Z$ such that $T^n(x)|_{F_1}=w_1$ and 
	$T^n(x)|_{F_2}=w_2$.
	If $n$ is divisible by $k^m$, then $w_2 \in \L(x,F_2, k^m\Z)$; otherwise $w_1\in \L(x,F_1, \Z\setminus k^m\Z)$. 		In any case we get a contradiction.
 	
 	 To see \ref{nulcon1}, recall that all points in $F_1$ are divisible by  $k^m$, and that all  sequences in $\AP_{k^{m}}(x)$ except the 0th arithmetic progression $(x_{nk^{m}})_n$ lie in $Z=\Ker_k(x)\setminus \{x\}$. Hence,
 	 \[\L(x,F_1, \Z\setminus k^m\Z) = \bigcup \{\L(y,F'_1)\mid y\in Z\},\]
 	 where $F'_1 = 1/k^m F_1$, and the claim follows from \eqref{eq:parameter s'} and the choice of $s\geq s'$.
 	 
 	 To see  \ref{nulcon2}, recall that all elements in $F_2$  are not divisible by $k^m$. Divide the nonzero residues modulo $k^m$ into $\abs{Z}$ disjoint classes:
 	 \[W_z = \{ 1\leq i<k^m\mid (x_{i+nk^m})_n = z\},\quad z\in Z.
 	 \]
 	 Since $F_2$ is of size $(|Z|+1)s'$ we can find $F_3\subset F_2$ of size $s'$ and $z\in Z$ such that 
	\[\abs{F_3\cap (W_z + \Z k^m)}\geq s'.
	\] 	 
 	 Now, if $\abs{i - j}<k^m$ for some distinct $i,j\in F_3$, then  $x_{n+i} = x_{n+j}$ for all $n\in k^m\Z$ (since they correspond to the same letter of the sequence $z$) and the claim is true since we cannot realize two different letters on positions $n+i$ and $n+j$ for any $n\in k^m\Z$.
 	 
 	 In the other case, consider the map $i\to i'$, which maps each $i\in F_3$ to the biggest $i'\in \Z$ such that $i'k^m<i$.
 	 Since all $i\in F_3$ are at least $k^m$ apart, this map is injective and
 	 \[ \L(x,F_3,k^m\Z)  = \L(z,F'_3),\]
where $F'_3$ denotes the set of all $i'$, $i\in F_3$.  	 
 	Since $\abs{F'_3}=s'$ and $z$ is not $s'$-null, the claim follows.
\end{proof}

Now we are ready to prove the main result of this section. 
	The main part will be to show that any system $X_{\varphi}$ with amorphic complexity 1 is null.
	Let $x$ be a fixed point of $\varphi$.
	We will see that if $\ac(X_{\varphi})=1$, then each nonconstant element $y$ of $\Ker_k(x)$ always eventually leads---in  the  manner suggested by Example \ref{ex:nullness3}---to sequences whose all but one $k$-$\AP$ are constant. 
	Since all constant sequences are null, a repeated use of Lemma \ref{prop:kernelandnullness} will let us conclude that $y$ is null. In particular, $x$ is null.

\begin{theorem}\label{thm:tameness} 
Let $\varphi\colon \A\to\A^*$ be a primitive substitution of length $k\geq 2$  and assume that $X_{\varphi}$ is infinite. The following are equivalent:
\begin{enumerate}
\item\label{thm:tameness1}  $\ac(X_{\varphi})=1$.
\item\label{thm:tameness2}  $X_{\varphi}$ is null,
\item\label{thm:tameness3}  $X_{\varphi}$ is tame.
\end{enumerate}
\end{theorem}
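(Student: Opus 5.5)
Throughout we pass to the pure base: by Theorem \ref{thm:dekking}\eqref{thm:dekking4} and \eqref{thm:dekking5}, amorphic complexity, nullness and tameness are unchanged. So we may assume $\varphi$ has height $1$, and replacing $\varphi$ by a power, that $\varphi$ has a fixed point $x$ and $\varphi_s$ is in normal primitive form. Since $X_\varphi$ is infinite, $\lambda_s\geq 1$ by Proposition \ref{prop:sepnumberproperties}. By Theorem \ref{thm:ac_of_automatic_systems}, $\ac(X_\varphi)=1$ holds if and only if $\lambda_s=1$; the case $\lambda_s=k$ gives $\ac=\infty$. Hence \eqref{thm:tameness1} is equivalent to $\lambda_s=1$. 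The implication \eqref{thm:tameness2}$\Rightarrow$\eqref{thm:tameness3} is automatic, since null systems are tame. It remains to show $\lambda_s=1\Rightarrow$ null and tame $\Rightarrow\lambda_s=1$.

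For tame $\Rightarrow \lambda_s=1$, I would use Remark \ref{rem:graph}. By \cite[Thm.~2.2]{FuhrmannKellendonkYassawi2024}, tameness of $X_\varphi$ is equivalent to the graph $\mathcal G_\varphi$ having no two distinct cycles sharing a vertex. This is Condition \eqref{prop:simplekernelstructure4} of Proposition \ref{prop:simplekernelstructure}, and Proposition \ref{prop:simplekernelstructure} shows it is equivalent to $\lambda_s=1$ because $x$ is not periodic, $X_\varphi$ being infinite and minimal. If one prefers a self-contained route, one can argue the contrapositive directly. If \eqref{prop:simplekernelstructure4} fails, there is a column set $\A'$ with $|\A'|\ge2$ and two distinct indices $i\ne j<k^m$ with $\varphi^m_i|_{\A'}=\varphi^m_j|_{\A'}=\mathrm{id}$. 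Iterating words in $\varphi^m_i,\varphi^m_j$ then gives an infinite independence set for a pair of cylinders $[a],[b]$ with $a\ne b\in\A'$: any infinite choice pattern is realised by a $k^m$-adic digit expansion, and compactness produces a nontrivial IT pair. I would cite the graph result rather than redo this.

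The main step is $\lambda_s=1\Rightarrow$ null. By Proposition \ref{prop:simplekernelstructure}\eqref{prop:simplekernelstructure3}, every nonconstant $y\in\Ker_k(x)$ appears at most once as a $k^m$-AP of itself for every $m$, so Lemma \ref{prop:kernelandnullness} applies to each such $y$. I would proceed by induction along the finite kernel, ordered by the matrix $A=[a_{yz}]$ on $\NK_k(x)$ from the proof of Proposition \ref{prop:simplekernelstructure}. Define the preorder $y\preceq z$ if $y$ appears in $\Ker_k(z)$, and consider the quotient by the relation of mutual appearance.

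The key point is that each equivalence class is a singleton. If $y\ne z$ each appeared in the kernel of the other, then $y$ would appear as a $k^m$-AP of $y$ along two distinct paths: one through $z$ and one through $y$'s own self-return, which exists by finiteness of $\Ker_k(y)$ and pigeonhole, after taking powers. This would give $a^{(m)}_{yy}\ge2$ and hence $\lambda\geq\lambda_s>1$, a contradiction. Similarly, a single $y$ cannot have two distinct self-return indices.

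Thus the order is a genuine partial order on a finite set, and I would argue by strong induction from the minimal elements. Constant sequences are null. Given $y$, every $z\in\Ker_k(y)\setminus\{y\}$ is strictly below $y$, so is null by the induction hypothesis. Lemma \ref{prop:kernelandnullness} then gives that $y$ is null.

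Taking $y=x$ shows $x$ is null, and by Lemma \ref{lem:nullness} $X_\varphi$ is null. The hardest part is verifying the antisymmetry claim cleanly, namely that mutual kernel membership forces $\lambda_s>1$ via the diagonal entries of $A^m$. Once that is settled, the induction is routine.
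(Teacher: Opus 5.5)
The overall structure matches the paper's proof: you reduce to height $1$, use Theorem \ref{thm:ac_of_automatic_systems} to turn (i) into $\lambda_s=1$, get tame $\Rightarrow\lambda_s=1$ from \cite{FuhrmannKellendonkYassawi2024} and Proposition \ref{prop:simplekernelstructure}, and get $\lambda_s=1\Rightarrow$ null from Lemma \ref{prop:kernelandnullness} plus finiteness of the kernel. Your well-founded induction is just the contrapositive of the paper's descent $y_0,y_1,\dots$.

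The gap is in the step you flag as hardest. You claim that distinct nonconstant $y,z$ lying in each other's $k$-kernel force $a^{(m)}_{yy}\geq 2$. This is false, even when $\varphi$ has a fixed point and $\varphi_s$ is in normal primitive form. Take $k=3$, $\varphi(a)=abb$, $\varphi(b)=aab$, which is primitive of height $1$, and its two-sided fixed point $x$ with $x_{-1}x_0=ba$. Since $x_{3n+j}=\varphi_j(x_n)$ and $\varphi_1$ swaps $a$ and $b$:
\begin{enumerate}
\item the $1$st $3$-AP of $x$ is the letter-swap $\bar x$;
\item the $1$st $3$-AP of $\bar x$ is $x$;
\item all other $3$-APs of $x$ and $\bar x$ are constant.
\end{enumerate}
So $x\neq\bar x$ lie in each other's kernel. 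Yet on $\NK_3(x)=\{x,\bar x\}$ the matrix is $A=\begin{pmatrix}0&1\\1&0\end{pmatrix}$, so $a^{(m)}_{xx}\leq 1$ for all $m$. Consistently, $\varphi_s{a\choose b}={a\choose b}$ and $\lambda_s=1$. The ``self-return of $y$'' you invoke is the very path through $z$, so you never get two distinct paths. If you instead pass to a power to make that return a loop (here $\varphi^2$), then $\bar x$ is no longer in the $9$-kernel of $x$, so there is no second path either. As written, your induction stalls at $x$ and $\bar x$, each waiting for the other.

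The missing ingredient is the paper's normalisation (C1'). Because $\lambda_s=1$, the exponents $n$ for which a nonconstant $y$ is a $k^n$-AP of itself are exactly the multiples of some $n_y$: its strongly connected piece of the kernel graph is a single cycle.
\begin{enumerate}
\item Replace $\varphi$ by $\varphi^{n'}$, with $n'$ a common multiple of all $n_y$.
\item Define your order on $\Ker_{k^{n'}}(x)\subseteq\Ker_k(x)$. Condition \eqref{prop:simplekernelstructure3} of Proposition \ref{prop:simplekernelstructure} still holds there.
\item If $y$ occurs in its own kernel, it is now the $r$-th $k^{n'}$-AP of itself for a unique $r$.
\item Any return path from $y$ through some $z\neq y$ must, after stripping initial loops, leave $y$ along a digit $d\neq r$.
\item Comparing this path with the iterated loop at the same level gives two distinct indices at which $y$ appears as an AP of itself, contradicting \eqref{prop:simplekernelstructure3}.
\end{enumerate}
Only after this normalisation is your preorder antisymmetric. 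The induction then goes through, with Lemma \ref{prop:kernelandnullness} applied with $k^{n'}$ in place of $k$.
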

\begin{proof} 
By claims \eqref{thm:dekking1}, \eqref{thm:dekking4}, \eqref{thm:dekking5} of Theorem \ref{thm:dekking}, we may assume without loss of generality that $\varphi$ has height $1$.
	  Since $X_{\varphi} = X_{\varphi^m}$ for any $m\geq 1$, we may furthermore assume that $\varphi$ has some fixed point $x\in X_{\varphi}$.
	 By Theorem \ref{thm:ac_of_automatic_systems}, $\ac(X_{\varphi})=1$ if and only if  $\lambda_s=1$, where $\lambda_s$ is the discrepancy rate of $\varphi$.
	
	The fact that \eqref{thm:tameness2} implies \eqref{thm:tameness3} is well known (and follows e.g.\ directly from Definition \ref{def:general tame null}).
	The fact that \eqref{thm:tameness3} implies \eqref{thm:tameness1} follows directly from Proposition \ref{prop:simplekernelstructure} and \cite{FuhrmannKellendonkYassawi2024} (see also Remark \ref{rem:graph}).	
	Thus we only need to show that \eqref{thm:tameness1} implies \eqref{thm:tameness2}. 
	By Lemma \ref{lem:nullness}, it is enough to show that the fixed point $x$ is null.

	For each $y$ in the $k$-kernel of $x$, we let $\NK(y)$ denote the set of nonconstant sequences lying in the $k$-kernel of $y$; note that this set always includes $y$ itself unless $y$ is constant.
    Since $\lambda_s=1$, by Proposition \ref{prop:simplekernelstructure}, $x$ has a simple $k$-kernel structure and	
		\begin{enumerate}
	\item [(C1)] for each $y\in \NK(x)$ and each $n\geq 1$, $y$ appears at most once as $k^n$-AP in $y$.
	\end{enumerate}
	Let $n'$ be the least common multiple of all the smallest $n$'s such that $y$ appears as $k^n$-$\AP$ in $y$ if such $n$ exists.
	Changing $\varphi$ to its power $\varphi^{n'}$, 
 	we may furthermore assume that
   \begin{enumerate}
   \item [(C1')] for each $y\in \NK(x)$ either $y$ does not appear as $k^n$-$\AP$ of $y$ for any $n\geq 1$ or $y$ appears exactly once as $k^n$-$\AP$ in $y$ for all $n\geq 1$.
   \end{enumerate}
	By Lemma \ref{prop:kernelandnullness}, claim (C1), and the fact that all constant sequences are null we have that
   \begin{enumerate}
   \item [(C2)]  $y\in\NK(x)$ is null if and only if all sequences in $\NK(y)\setminus\{y\}$ are null.
   \end{enumerate}

Now, we will show that $x$ is null. Suppose for the sake of contradiction that it is not and let $y_0=x$.
By (C2), there exists a nonnull sequence $y_1\in \NK(y_0)\setminus \{y_0\}$. Using (C2) again, we conclude that there exists a nonnull sequence $y_2\in \NK(y_1)\setminus \{y_1\}$. Continuing in this way, by (C2), we construct an infinite sequence of nonnull sequences $y_0, y_1, y_2, \dots$ lying in $\Ker_k(x)$ such that
\[y_{i+1}\in \NK(y_i)\setminus \{y_i\} \quad \text{for} \quad i\geq 0.
\]
    By (C1'), all sequences $y_i$, $i\geq 0$ are distinct; indeed, if $y_i=y_j$ for some $j<i$, then $i>j+1$ by construction and $y_i=y_j$ appears as a $k^{i-j-1}$-$\AP$ of some $k$-$\AP$ $z\neq y_i$ of $y_i$. Thus, by (C1'), $y_i$ also appears as a $k$-$\AP$ of $y_i$ and thus, $y_i$ appears at least twice as a $k^{i-j}$-$\AP$ of $y_i$, which contradicts (C1'). Hence, all $y_i$ are distinct. Now, this is clearly a contradiction: all sequences $y_i$ lie in $\Ker_k(x)$ and  $\Ker_k(x)$ is finite.
\end{proof}

\bibliographystyle{alpha}
\bibliography{lit}

\end{document}